\newtheorem{thm}{Theorem}[section]
\newtheorem{lem}[thm]{Lemma}
\newtheorem{cor}[thm]{Corollary}
\theoremstyle{definition}
\newenvironment{customthm}[1]
{\innercustomthm}
{\endinnercustomthm}
\newtheorem{defn}[thm]{Definition}
\theoremstyle{remark}
\newtheorem{remk}[thm]{Remark}
\newtheorem{remks}[thm]{Remarks}
\newtheorem{exm}[thm]{Example}
\newtheorem{exms}[thm]{Examples}
\newtheorem{notat}[thm]{Notation}
\newtheorem{constr}[thm]{Construction}
\numberwithin{equation}{section}
\newenvironment{Def}{\begin{defn}}%
	{\hfill$\square$\end{defn}}
\newenvironment{rem}{\begin{remk}}%
	{\hfill$\square$\end{remk}}
	{\hfill$\square$\end{remks}}
	{\hfill$\square$\end{exm}}
	{\hfill$\square$\end{exms}}
	{\hfill$\square$\end{notat}}
\newcommand{\thmref}{Theorem~\ref}
\newcommand{\corref}{Corollary~\ref}
\newcommand{\defref}{Definition~\ref}
\newcommand{\lemref}{Lemma~\ref}
\newcommand{\remref}{Remark~\ref}
\newcommand{\secref}{Section~\ref}
\newcommand{\QCoh}{{\operatorname{\rm QCoh}}}
\newcommand{\Fact}{{\operatorname{\rm Fact}}}
\newcommand{\Zs}{Z(s)}
\newcommand{\fBo}{\fB ^{\circ}}
\newcommand{\fB}{\mathfrak{B}}
\newcommand{\fM}{\mathfrak{M}}
\newcommand{\lan}{\langle}
\newcommand{\ran}{\rangle}
\newcommand{\kb }{{\beta}}
\newcommand{\h}{\mathrm{h}}
\newcommand{\LGQ}{LGQ}
\newcommand{\ti }{\times}
\newcommand{\vir}{\mathrm{vir}}
\newcommand{\ot }{\otimes}
\newcommand{\ke }{{\varepsilon }}
\newcommand{\ra }{\rightarrow}
\newcommand{\A}{{\mathbb A}}
\newcommand{\kl}{\lambda}
\newcommand{\Coh }{{\mathrm{Coh}}}
\newcommand{\Spec}{{\mathrm{Spec}}}
\newcommand{\Ker}{{\mathrm{Ker}}}
\newcommand{\ka }{{\alpha}}
\newcommand{\Sym}{{\mathrm{Sym}}}
\newcommand{\cO}{{\mathcal{O}}}
\newcommand{\cM}{{\mathcal{M}}}
\newcommand{\cF}{{\mathcal{F}}}
\newcommand{\cV}{{\mathcal{V}}}
\newcommand{\cK}{{\mathcal{K}}}
\newcommand{\fC}{{\mathfrak{C}}}
\newcommand{\sA}{{\mathcal A}}
\newcommand{\sC}{{\mathcal C}}
\newcommand{\sE}{{\mathcal E}}
\newcommand{\sF}{{\mathcal F}}
\newcommand{\sK}{{\mathcal K}}
\newcommand{\sL}{{\mathcal L}}
\newcommand{\sO}{{\mathcal O}}
\newcommand{\sP}{{\mathcal P}}
\newcommand{\sU}{{\mathcal U}}
\newcommand{\sV}{{\mathcal V}}
\newcommand{\sX}{{\mathcal X}}
\newcommand{\sY}{{\mathcal Y}}
\newcommand{\C}{{\mathbb C}}
\renewcommand{\P}{{\mathbb P}}
\newcommand{\inj}{\hookrightarrow}
\newcommand{\taut}{{\rm taut \ }}
\newcommand{\op}{\oplus}
\newcommand{\Tot}{{\operatorname{\rm Tot}}}
\newcommand{\DM}{{\operatorname{\mathcal{DM}}}}
\newcommand{\ds}{{/\kern-3pt/}}
\renewcommand{\dim}{\text{\rm dim}}
\newcommand{\tuborg}{\left\{\begin{array}{ll}}
	\newcommand{\sluttuborg}{\end{array}\right.}
\def\cO{\mathcal{O}}
\def\cF{\mathcal{F}}
\newcounter{elno}
\newcounter{elno-abc}   
\newcounter{elno-abc-prime}   
\begin{document}
	
	\title{Localization by $2$-periodic complexes and Virtual Structure Sheaves}
	\author[Oh]{Jeongseok Oh}
	\address{KIAS\\
		Seoul\\
		Korea}
	\email{jeongseok@kias.re.kr}
	
	\author[Sreedhar]{Bhamidi Sreedhar}
	\address{KIAS\\
		Seoul\\
		Korea}
	\email{sreedhar@kias.re.kr }
	
\begin{abstract}
	In \cite{KO1}, Kim and the first author proved a result comparing the  virtual fundamental classes of the moduli spaces of $\ke$-stable quasimaps and $\ke$-stable $LG$-quasimaps by studying localized Chern characters for $2$-periodic complexes.

	 In this paper, we study a $K$-theoretic analogue of the  localized Chern character map and show that for a Koszul $2$-periodic complex it coincides with the cosection localized Gysin map of Kiem-Li (\cite{KiemLi2}). As an application we compare  the  virtual structure sheaves of the moduli space  of $\ke$-stable quasimaps and $\ke$-stable $LG$-quasimaps.
	 
\end{abstract}
	
	\maketitle
	
	\setcounter{tocdepth}{1}
	\tableofcontents

	\section{Introduction}
	In \cite{KO1}, Kim and the first author  study  the localized Chern character map of Polishchuk-Vaintrob (\cite{PV:A}) and show that with respect to the tautological Koszul $2$-periodic complex  the localized Chern character map coincides with the cosection localization map of  Kiem-Li (\cite{KiemLi}). This allows the authors to prove a result comparing  the virtual fundamental classes  of the moduli space of stable quasimaps and the moduli space of stable $LG$-quasimaps. Chiodo in \cite{Chiodo} constructed Witten's top Chern class avoiding the use of bivariant intersection theory of \cite{PV:A} by using purely $K$-theoretic methods. In \cite{KiemLi2}, Kiem and Li constructed a cosection localization map for virtual structure sheaves. In this note we  study the $K$-theoretic analogue of \cite{KO1}. We generalize the results \cite{KO1} to $K$-theory and for a $2$-periodic complex of vector bundles with appropriate support we define a localization map (\secref{Sec:NotionsofSuppor} \& \defref{Defn:$h^Y_X$})  which plays the role of the localized Chern character map. This map is a tensor product followed by taking the alternating sum of the even and odd cohomology sheaves, which is precisely the map studied by Chiodo (see \cite[Lemma 5.3.4]{Chiodo}). We show that this map when applied to the tautological  Koszul $2$-periodic complex coincides with the cosection localized Gysin map of Kiem-Li (\cite{KiemLi2}). Now as in \cite{KO1}, this allows us to prove a result comparing the virtual structure sheaves  of the moduli space of stable quasimaps and the moduli space of  stable $LG$-quasimaps analogous to \cite[Theorem 3.2]{KO1}.

		The main result we prove in this text is as follows where the notations are explained in  \secref{subsec: SetupNotations}.  
	\begin{customthm}{\ref{Comp:Thm}}
		In the Grothendieck group of coherent sheaves on $Q^\ke_X:=Q^{\ke}_{g, k} ( Z(s) , d)$, we have
		\begin{align}\label{Vir:Eq2}[ \cO_{Q^{\ke}_X}^{\vir}] = (-1)^{\chi^{gen} ( R\pi_*\cV ^{\vee}_2 ) } \det R \pi_*(\cV_2 \ot \omega_\fC)^\vee  |_{Q_X^{\ke}} \ot  [ \cO_{LGQ',  dw_{LGQ'}}^{\vir} ]
		\end{align}
		where $\chi^{gen} ( \cV ^{\vee}_2 )$ is generic virtual rank  of  $R \pi _* \cV_2 ^{\vee}$ and $\det R \pi_*(\cV_2 \ot \omega_\fC)^\vee$ is the determinant (see \defref{defn:det}).
	\end{customthm}

A conceptually more intuitive explanation of \thmref{Comp:Thm} is as follows. Let $\sX$ be a Deligne-Mumford stack and let $\mathbb{E}=[A \ra B]$ be a perfect obstruction theory on $\sX$, where $A$ and $B$ are vector bundles.  Following  \cite[\S~2.1]{thomas}, we can define the determinant of $\mathbb{E}$ by, $$\sK_{\vir}  := (\det A)^\vee \ot \det B.$$  A square root of $\sK_{vir}$ is a line bundle $\sL$ (which need not exist) such that $\sL^2 = \sK_{\vir}$. It is usually denoted by  $\sK_{\vir}^{1/2}$ (\cite[\S~2.2]{thomas}) and is called the  square root of $\det \mathbb{E}$.

Let $LGQ'$ be the moduli space of $LG$-quasimaps and let $Q_X^{\ke}$ be the moduli space of $\ke$-stable quasimaps (see \secref{subsec: SetupNotations} and \defref{defn:det} for the precise notations). Let $\mathbb{E}_{LGQ'}$ and $\mathbb{E}_{Q_X^\ke}$ denote the canonical perfect obstruction theories defined on the respective moduli spaces (see \secref{subsec: SetupNotations}).  Let us assume that there exists a square root of $\det \mathbb{E}_{LGQ'}$ denoted by $\cK^{1/2}_{\LGQ',\vir}$ and let $\sK_{{vir}, Q_X^{\ke}}$ denote the determinant of $\mathbb{E}_{Q_X^\ke}$. Note that,  
\begin{align*}
\cK_{Q^\ke_X, \vir}& =  (\cK^{1/2}_{\LGQ', \vir} \ot \det R \pi_*(\cV_2 ^\vee))|_{Q_X^{\ke}}^{\ot 2}. 
\end{align*}
Thus, if $\cK^{1/2}_{\LGQ',\vir}$ exists  we have a canonical choice for the square root of  $\det \mathbb{E}_{Q_X^\ke}$ given by, 
$$\cK^{1/2}_{Q^\ke_X, \vir} := (\cK^{1/2}_{\LGQ', \vir} \ot \det R \pi_*(\cV_2 ^\vee))|_{Q_X^{\ke}}$$
Now  \thmref{Comp:Thm} can be restated as follows, 
	\begin{align} \label{re-main theorem}
[\cK^{1/2}_{Q^\ke_X, \vir} \ot \cO_{Q^{\ke}_X}^{\vir}] =  (-1)^{\chi^{gen} ( R\pi_*\cV ^{\vee}_2 ) } [ \cK^{1/2}_{\LGQ' ,\vir}|_{Q_X^{\ke}} \ot \cO_{LGQ',  dw_{LGQ'}}^{\vir} ], 
\end{align}

When $\ke = \infty$, note that the moduli spaces under consideration are quasi-projective, hence   \eqref{re-main theorem} is well defined  if $Q^\infty_{g,k}(Z(s), d)=\overline{\cM}_{g,k}(Z(s), d)$ possesses a square root of $\det \mathbb{E}_{Q_X^\ke}$  (i.e $\sK^{1/2}_{vir}$ exists).

	\subsection{Structure of the paper and relations to other work}\label{subsec:planofpaper}
	
	In \cite{KO1}, Kim and the first author obtained a comparison result of virtual classes of quasimap moduli spaces and $LG$-quasimap moduli spaces \cite[Theorem 3.2]{KO1} by using the localized Chern character of $2$-periodic complexes of vector bundles.  Using the ideas of {\it loc.cit.} in this text we  prove similar results for the virtual structure sheaves.
	
	 In \secref{LocK}, we fix the notations and assumptions that we shall follow in the rest of this text. Several steps in the proofs and constructions used are the $K$-theoretic analogues of \cite[Section 2]{PV:A}.  In \defref{Defn:$h^Y_X$} we define the map $\h^{\sY}_{\sX}(E_{\bullet})$  that would play the role that the localized Chern character plays in {\it loc.cit}. This definition is motivated by the map considered by  Chiodo in \cite[Lemma 5.3.4]{Chiodo}. In the rest of \secref{Sec:FunctorialityProperties} we study the functoriality properties of the map $\h^{\sY}_{\sX}(E_{\bullet})$ with respect to various functors in $K$-theory. As we shall require functoriality statements with respect to various derived functors, the natural setting for our statements is the derived category of Matrix Factorizations, we recall some preliminaries from \cite{MF} in the beginning of \secref{LocK} and in particular in \secref{Sec:NotionsofSuppor} we recall various notions of support and acyclicity for $2$-periodic complexes. 
	
The setup of  \secref{Kos:Comp} and \secref{section:com_sh} are similar to \cite[\S~2 , \S~3]{KO1}.	In \secref{Kos:Comp}, we recall the definition of the Koszul $2$-periodic complex associated to a section and a cosection of a vector bundle. In \cite{KiemLi2}, Kiem and Li construct a cosection-localized Gysin map to define a cosection-localized virtual structure sheaf. The goal of this section is to compare the map $\h^{\sY}_{\sX}(E_{\bullet})$, when $E_{\bullet}$ is the tautological Koszul $2$-periodic complex (\secref{taut}) to the construction of Kiem-Li (\cite[Theorem 4.1]{KiemLi2}). A similar comparison for the localized Chern character with the cosection localized Gysin map is \cite[Theorem 2.6]{KO1}. Building on this comparison result with the constructions Kiem-Li, in \secref{section:com_sh} we prove the main result (\thmref{Comp:Thm}). This section relies on the constructions of \cite[\S~3]{KO1} to aid the reader and fix notations we briefly summarize the results of \cite{KO1} in Section \ref{subsec: SetupNotations}  before proving  \thmref{Comp:Thm} in \secref{subsec: Proof of Main Theorem}.  
	\bigskip
	
	\noindent{\bf Acknowledgments}  
	The authors would like to thank Bumsig Kim for suggesting the problem to us and for indicating to us that \defref{Defn:$h^Y_X$} would be the $K$-theoretic counter part of the localized Chern character map. 
We would further  like to thank Bumsig Kim and David Favero for their precious advice and answering several doubts we had throughout the preparation of this text. J. Oh would like to thank Alexander Polishchuk and Richard Thomas for their interest in the result and useful advice. He also thanks Feng Qu and Mark Shoemaker for useful advice and discussions. 
	
	\section{Localization in $K$-theory}\label{LocK}
	\subsection{Assumptions and Notations} \label{Convention}

In this article we work over a  base  field $k$ of characteristic $0$. In \secref{section:com_sh} we shall further assume $k =\C$. All {\it schemes} and {\it algebraic stacks} are assumed to be separated and of finite type over $k$. In this article we shall mainly work with {\it Deligne-Mumford} stacks which we shall abbreviate as  $\DM$ stacks. For a $\DM$-stack $\sY$, let $\Coh (\sY)$ denote the abelian category of coherent sheaves and let $D^b(\sY)$ denote the bounded derived category of coherent sheaves on $\sY$. $G_0(\sY)$ shall denote the Grothendieck group of coherent sheaves on $\sY$, that is $G_0(\sY) := K_0(\Coh(\sY))$. Let $\sX$ be a closed substack of $\sY$ then we denote the category of coherent sheaves on $\sY$ supported on $\sX$ by $\Coh_{\sX} \sY$. It follows from d\'{e}vissage that the pushforward map induces a natural isomorphism $G_0(\Coh \sX) \xrightarrow{\cong} K_0(\Coh_{\sX} \sY)$. For a coherent sheaf $\sF$ we denote its class in $G_0$ by $[\sF]$.

\subsubsection{$2$-periodic complexes and the category of Matrix factorizations}
Let $\sY$ be a $\DM$-stack. 
A $2$-periodic complex of quasi-coherent sheaves on $\sY$ is a complex $G_{\bullet}$ as follows,
$$G_{\bullet} = [ \xymatrix{   G_{-}  \ar@<1ex>[r]^{d_{-} }  
	&  \ar@<1ex>[l]^{d_{+} } G_{+}  }] = ... \xrightarrow{d_+} G_{-} \xrightarrow{d_{-}} G_+ \xrightarrow{d_+} G_{-} \xrightarrow{d_{-}}  ... ,$$ 
where $G_-$ is in odd degree, $G_+$ is in even degree and $d_+ \circ d_- = d_- \circ d_+ = 0$.
$G_{\bullet}$ is said to be a $2$-periodic complex of vector bundles on $\sY$ if  we further require that $G_+$ and $G_-$ are vector bundles. 
For the proofs of some functoriality results it would be advantageous to work in a suitable derived category. As noted a $2$-periodic complex is an unbounded complex and hence some care is needed in framing the statements correctly. We briefly recall the notion of the derived category of Matrix factorizations(see \cite[\S~2]{MF} for a detailed discussion) and notions of support where our main focus is $2$-periodic complexes. 

Let $\sL$ be a line bundle on $\sY$ and let $w$ be a section of $\sL$. 
One can associate to the pair  $(\sY,w)$ the absolute derived category of factorizations $D^{abs}[\Fact(\sY,w)]$ and $D(\sY, w)$(see \cite[Definition 2.1.4]{MF}) which is the full subcategory of $D^{abs}[\Fact(\sY,w)]$ consisting of those factorizations which are isomorphic to factorizations with each component coherent. For a detailed discussion see \cite[\S~2]{MF}. We shall be interested in the particular case when $w=0$ where it is apparent from the definition that a $2$-periodic complex of coherent sheaves can be considered to be an object of $D(\sY,0)$. We  further note that following \cite[Definition 3.18]{BFK} there is a a folding map $\Upsilon: D^b(\QCoh~\sY) \to D^{abs}[(\Fact(\sY,0)]$ from the  bounded derived category of quasi-coherent sheaves to the absolute derived category of factorizations. The functor $\Upsilon$ restricted to the subcategory of coherent sheaves defines a functor $\Upsilon: D^b(\sY) \to D(\sY,0)$.

\begin{remk}\label{rem: Foldingforcoherentsheaf}
	For a coherent sheaf $\sF$ on a $\DM$-stack $\sY$ by $\Upsilon(\sF)$ we shall denote the folding of the complex with the coherent sheaf $\sF$ in degree zero and  all other terms zero. 
\end{remk}

\begin{defn}[Tensor Product of $2$-periodic complexes]\cite[Definition 2.2.1]{MF}
	Let $E_{\bullet}:=(E_+, E_-, d^E_+, d^E_-)$ and $F_{\bullet}:= (F_+, F_-, d^F_+, d^F_-)$ be $2$-periodic complexes of coherent sheaves. The  tensor product denoted by $E_{\bullet} \otimes F_{\bullet}$  is the $2$-periodic complex defined as follows, 
	\begin{align*}
	(E_{\bullet} \otimes F_{\bullet} )_+ & :=  E_+ \otimes F_+ \bigoplus E_- \otimes F_-\\
	(E_{\bullet} \otimes F_{\bullet} )_- & := E_+ \otimes F_- \bigoplus E_- \otimes F_+ 
	\end{align*}
	
	\begin{equation*}
	d^{E \otimes F}_+ = \begin{pmatrix}
	1_{E_+} \otimes d^{F}_+ & d^{E}_- \otimes 1_{F_-} \\
	-d^{E}_+ \otimes 1_{F_+} & 1_{E_-} \otimes d^{F}_-
	\end{pmatrix}
	\end{equation*}
	\begin{equation*}
	d^{E \otimes F}_- = \begin{pmatrix}
	1_{E_+} \otimes d^{F}_- & -d^{E}_- \otimes 1_{F_+} \\
	d^{E}_+ \otimes 1_{F_-} & 1_{E_-} \otimes d^{F}_+
	\end{pmatrix}
	\end{equation*}
\end{defn}

\begin{remk}
	Note that essentially by definition the folding map $\Upsilon$ is compatible with the tensor product.
\end{remk}
\subsubsection{Notions of Support:}\label{Sec:NotionsofSuppor}

Let us briefly outline what we want to do in this section and why these notions of support are important to us. As before let  $\sY$ be a $\DM$-stack and let $E_\bullet$ be a bounded complex of vector bundles on $\sY$ supported on a closed substack $\sX$. For any coherent sheaf $\sF$ on $\sY$, the complex 
$E_\bullet \otimes \sF$ is also supported on $\sX$. This observation need not be true if our complex $E_\bullet$ was unbounded. Note also by support for bounded complexes we mean cohomological support. A $2$-periodic complex is unbounded by definition and hence we need to consider different notions of support and the compatibility between them for the proofs of this section. Most of the statements are elementary  and we note them down here due to a lack of a suitable reference. Let us begin by briefly recalling the the construction of the absolute Derived Category of factorizations. If $F_\bullet$ and $G_\bullet$ are $2$-periodic complexes then a morphism $a: F_\bullet \to G_\bullet$ is just a morphism of complexes such that the obvious squares commute. Translating this to the language of factorizations this is precisely the abelian category $Z^0\Fact(\sY,0)$ (\cite[Remark 2.1.2]{MF}). Let $Ch^b(Z^0\Fact(\sY,0))$ denote category of bounded chain complexes of factorizations (i.e chain complexes of chain complexes). For any object $\sE_\bullet \in Ch^b(Z^0\Fact(\sY,0))$, one can associate its totalization denoted by $\Tot(\sE_\bullet) \in \Fact(\sY,0)$ (see \cite[2.1.3]{MF}). 
Let $Acyc(\sY,0)$ be the full saturated subcategory of $\Fact(\sY,0)$ which contains the totalizations of exact sequences in $Ch^b(Z^0\Fact(\sY,0))$.
Now the construction of the absolute derived category follows the  standard recipe of looking at the underlying homotopy category and taking the 
Verdier quotient by $Acyc(\sY,0)$. The derived category of factorizations is denoted by $D^{abs}[\Fact(\sY,0)]$. 

For the rest of this discussion let $\sX \inj \sY$ be a closed substack and let $\sU = (\sY \setminus \sX) \xrightarrow{j} \sY$ be the open complement.  As we shall be working with several notions of support we highlight the following definitions.

\begin{defn}\cite[Definition 2.2.4]{MF} \label{A:defnAbsolute}
	$F_\bullet \in D^{abs}[\Fact(\sY,0)]$ is called {\it absolutely acyclic} if and only if $0 \to F_\bullet$ is an isomorphism in $D^{abs}[Fact(\sY,0)]$. Further, we say $F_\bullet$ is {\it absolutely supported on $\sX$} if and only if $0 \to j^*F_\bullet$ is an  isomorphism  in  $D^{abs}[\Fact(\sU,0)]$.
\end{defn}

Unlike arbitrary factorizations for $2$-periodic complexes one can also talk about the cohomology sheaves as the composition of the differentials is zero. We can thus make sense of the following definition which in the bounded case coincides with  the natural notion of support.

\begin{defn}[Cohomological Acyclicity]\label{A:defn: cohomological}
	A $2$-periodic complex $F_\bullet$ is called {\it cohomologically acyclic} if and only if $H_\pm (F_\bullet) = 0$. Further, we say that $F_\bullet$ is {\it cohomologically supported on $\sX$} if and only if the cohomology sheaves $H_\pm (F_\bullet)$ are supported on $\sX$. 
\end{defn} 

\begin{remk}
	It is not true that notions of acyclicity as in \defref{A:defnAbsolute} and \defref{A:defn: cohomological} coincide for a general $2$-periodic complex. However, for a folding of a bounded complex both the notions are the same
\end{remk}

The next definition is the notion of locally contractible complexes following Polischuk-Vaintrob (see paragraph just preceding  \cite[Definition 3.13]{PV:Stacks}).

\begin{defn}[Locally contractible complex]
	A $2$-periodic complex $E_\bullet$ is said to be {\it locally contractible} if and only if there exists a smooth atlas $Y \xrightarrow{p} \sY$ such that $p^*E_\bullet$ is contractible on $Y$ (i.e. the identity map is homotopic to the zero map). Further, 
	we say that $F_\bullet$ is {\it locally contractible off $\sX$ }if and only if $j^*F_\bullet$ is locally contractible on $\sU$. 
\end{defn}

We now summarize some relations between  these notions of support in the following lemma.

\begin{lem}\label{A:lem:comparision}
	Let $\sY$ be a $\DM$ stack and let $E_\bullet$ be a $2$-periodic complex on $\sY$. Then the following hold
	\begin{enumerate}[label=(\roman*)]
		\item $E_\bullet$ is absolutely acyclic $\implies$ $E_\bullet$ is cohomologically acyclic.
		\item $E_\bullet$ is locally contractible $\implies$ $E_\bullet$ is cohomologically acyclic.  
		\item Let $E_\bullet $ be a $2$-periodic complex of vector bundles on $\sY$ which is locally contractible then $E_\bullet \otimes F_\bullet$ is locally contractible for any $2$-periodic complex $F_\bullet$ on $\sY$.
	\end{enumerate}
\end{lem}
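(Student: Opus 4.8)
The plan is to prove the three statements of \lemref{A:lem:comparision} essentially from the definitions, working locally on a smooth atlas wherever possible.

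\medskip

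\noindent\emph{Part (i).} I would argue that absolute acyclicity is preserved by any exact functor out of the absolute derived category, and in particular by the functor that sends a factorization to its total cohomology sheaf $H_+\oplus H_-$. More precisely, if $F_\bullet$ is absolutely acyclic then it lies in the thick subcategory generated by totalizations $\Tot(\sE_\bullet)$ of short exact sequences $0\to \sE'_\bullet\to \sE_\bullet\to \sE''_\bullet\to 0$ in $Z^0\Fact(\sY,0)$. For such a totalization of $2$-periodic complexes, the associated total complex gives a long exact sequence in cohomology which, by $2$-periodicity, collapses to show $H_\pm(\Tot(\sE_\bullet))=0$ when the three terms are $2$-periodic complexes fitting in a termwise exact sequence. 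Since $H_\pm$ is additive and sends cones to exact triangles of (ordinary, non-$2$-periodic-indexed) cohomology, passing to the thick closure preserves vanishing. The one subtlety is that a priori the objects building up $F_\bullet$ in $D^{abs}$ need not themselves be $2$-periodic complexes; I would handle this by noting that $H_\pm$ is well-defined on all of $\Fact(\sY,0)$ (a factorization always has cohomology sheaves, since $d_+d_-=d_-d_+=0$) and descends to $D^{abs}$, so the argument goes through at the level of factorizations without restricting to $2$-periodic ones.

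\medskip

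\noindent\emph{Part (ii).} If $E_\bullet$ is locally contractible, pick the smooth atlas $p\colon Y\to\sY$ with $p^*E_\bullet$ contractible, i.e.\ $\id\simeq 0$ via a homotopy $h$. A contractible complex has vanishing cohomology sheaves, so $H_\pm(p^*E_\bullet)=0$; since $p$ is flat (being smooth), $p^*H_\pm(E_\bullet)=H_\pm(p^*E_\bullet)=0$, and because $p$ is a faithfully flat atlas this forces $H_\pm(E_\bullet)=0$. So $E_\bullet$ is cohomologically acyclic.

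\medskip

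\noindent\emph{Part (iii).} Here I would again reduce to an atlas: choose $p\colon Y\to\sY$ smooth and surjective over which $E_\bullet$ becomes contractible, with contracting homotopy $h\colon p^*E_\bullet\to p^*E_\bullet$ of odd degree with $d h + h d = \id$. I then want to exhibit a contracting homotopy for $p^*(E_\bullet\otimes F_\bullet)=p^*E_\bullet\otimes p^*F_\bullet$. The natural candidate is $h\otimes \id_{p^*F_\bullet}$ (suitably signed, following the Koszul sign rule implicit in the tensor-product differential recalled above): a direct check using the explicit formulas for $d^{E\otimes F}_\pm$ shows that $(h\otimes\id)\,d^{E\otimes F} + d^{E\otimes F}\,(h\otimes\id) = (dh+hd)\otimes \id = \id$, because the $F$-differential terms cancel in pairs by the sign conventions and the $E$-differential terms assemble to $dh+hd$. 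Thus $p^*(E_\bullet\otimes F_\bullet)$ is contractible, so $E_\bullet\otimes F_\bullet$ is locally contractible; note this uses that $E_+,E_-$ are vector bundles only insofar as it guarantees $p^*E_\bullet\otimes -$ behaves well, but the homotopy formula itself is purely formal.

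\medskip

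The main obstacle I anticipate is \emph{bookkeeping of signs} in part (iii): the tensor product of $2$-periodic complexes carries nontrivial signs in $d^{E\otimes F}_\pm$, and one must choose the sign in $h\otimes\id$ (on each of the two summands of $(E_\bullet\otimes F_\bullet)_\pm$) consistently so that the cross terms involving $d^F$ cancel. This is a routine but genuinely error-prone matrix computation with the $2\times 2$ block matrices displayed above. A secondary minor point is making sure in part (i) that one may work with cohomology sheaves of arbitrary factorizations and that $H_\pm$ genuinely kills totalizations of bounded exact sequences of $2$-periodic complexes — this is where one uses that the totalization of a short exact sequence of $2$-periodic complexes is again quasi-isomorphic (as a $2$-periodic complex) to an acyclic one.
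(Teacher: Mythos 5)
Your proof is correct, and for parts (ii) and (iii) it follows essentially the paper's route: reduce to a smooth atlas and, for (iii), exhibit an explicit contracting homotopy on the tensor product. The one substantive difference is the shape of that homotopy. The paper takes
\[
h'_+ = \begin{pmatrix} 1_{E_+}\otimes d^{F}_+ & h_-\otimes 1_{F_-}\\ -h_+\otimes 1_{F_+} & 1_{E_-}\otimes d^{F}_-\end{pmatrix},\qquad
h'_- = \begin{pmatrix} 1_{E_+}\otimes d^{F}_- & -h_-\otimes 1_{F_+}\\ h_+\otimes 1_{F_-} & 1_{E_-}\otimes d^{F}_+\end{pmatrix},
\]
whose off-diagonal blocks are exactly your signed $h\otimes\id$ and whose diagonal blocks form an extra term $D_\pm$ satisfying $d^{E\otimes F}_\mp D_\pm + D_\mp d^{E\otimes F}_\pm=0$; so the two homotopies differ by a harmless summand and both satisfy $dh'+h'd=\id$. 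Your sign claim checks out: with $(h\otimes\id)_+$ having entries $h_-\otimes 1_{F_-}$ and $-h_+\otimes 1_{F_+}$ and $(h\otimes\id)_-$ having entries $-h_-\otimes 1_{F_+}$ and $h_+\otimes 1_{F_-}$, the cross terms $h_-\otimes d^F_-$ and $h_+\otimes d^F_+$ each occur once with each sign and cancel, while the diagonal assembles to $(d^E_\mp h_\pm+h_\mp d^E_\pm)\otimes 1=\id$. For (i) the paper merely cites \cite{LP} and \cite{Orlov2}, whereas you reproduce the argument behind those references: $H_\pm$ is defined on all of $\Fact(\sY,0)$, kills totalizations of short exact sequences (exactness of rows of the bicomplex), and the $H_\pm$-acyclic objects form a thick subcategory, hence contain $Acyc(\sY,0)$. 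This is a valid, more self-contained treatment; note also that the subtlety you flag is vacuous in the case $w=0$, since every factorization of $0$ already satisfies $d_+d_-=d_-d_+=0$ and is therefore a $2$-periodic complex.
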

\begin{proof}
	$(i)$ follows from \cite[Remark 2.2]{LP} and discussion preceding \cite[Definition 3]{Orlov2}. $(ii)$ is obvious from the definition of being locally contractible. 
	To show $(iii)$, let $E_\bullet$ be a $2$-periodic complex of vector bundles on $\sY$ which is locally contractible. As the question is local we can assume that $\sY$
	is a scheme and $E_\bullet$ is contractible on $\sY$. Let the homotopy between the identity and zero maps be  given by  $h_+: E_+ \to E_-$ and $h_-: E_- \to E_+$. 
	Further, let us assume that $E_\bullet= (E_+, E_-, d_+^E, d_-^E)$ and $F_\bullet = (F_+, F_-, d^F_+, d^F_-)$. Let $h'_\pm: (E_\bullet \otimes F_\bullet)_\pm \to (E_\bullet \otimes F_\bullet)_\mp$
	be the new homotopies defined as follows, 
	\begin{equation*}
	h'_+ = \begin{pmatrix}
	1_{E_+} \otimes d^{F}_+ & h_- \otimes 1_{F_-} \\
	-h_+ \otimes 1_{F_+} & 1_{E_-} \otimes d^{F}_-
	\end{pmatrix},
	\end{equation*}
	
	\begin{equation*}
	h'_- = \begin{pmatrix}
	1_{E_+} \otimes d^{F}_- & - h_- \otimes 1_{F_+} \\
	h_+ \otimes 1_{F_-} & 1_{E_-} \otimes d^{F}_+
	\end{pmatrix}.
	\end{equation*}
	Then $h'_\pm$ is the required homotopy which completes the proof of $(iii)$.
\end{proof}

\begin{remk}
We note that in \lemref{A:lem:comparision}(iii) the assumption that $E_\bullet$ is a complex of vector bundles is not necessary for the proof, it would be required for tensor to define an exact functor (see \defref{Defn:$h^Y_X$}).
\end{remk}
By restricting to the complement of the respective supports we have the following obvious corollary.
\begin{cor}\label{A:Cor-Comparision}
	Let $\sY$ be a $\DM$ stack and $\sX$ a closed substack. Let $E_\bullet$ be a $2$-periodic complex on $\sY$. Then the following hold
	\begin{enumerate}[label=(\roman*)]
		\item $E_\bullet$ is absolutely acyclic off $\sX$  $\implies$ $E_\bullet$ is cohomologically acyclic off $\sX$.
		\item $E_\bullet$ is locally contractible off $\sX$ $\implies$ $E_\bullet$ is cohomologically acyclic off $\sX$.  
		\item Let $E_\bullet $ be a $2$-periodic complex of vector bundles on $\sY$ which is locally contractible off $\sX$ then $E_\bullet \otimes F_\bullet$ is locally contractible off $\sX$ for any $2$-periodic complex $F_\bullet$ on $\sY$. In particular $E_\bullet \otimes F_\bullet$ is cohomologically acyclic off $\sX$.
	\end{enumerate}
\end{cor}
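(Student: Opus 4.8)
The plan is to deduce each of the three assertions from the corresponding part of \lemref{A:lem:comparision}, applied not to $\sY$ but to the open complement $\sU = \sY \setminus \sX$ and to the restricted complex $j^*E_\bullet$, and then to translate the resulting statement on $\sU$ back into a statement about $E_\bullet$ on $\sY$. The only inputs needed for that translation are that $j$ is an open immersion, so $j^*$ is exact, and that $j^*$ is compatible with the tensor product of $2$-periodic complexes.

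First I would record the dictionary between the ``off $\sX$'' notions and their restrictions to $\sU$. By \defref{A:defnAbsolute}, to say $E_\bullet$ is absolutely acyclic off $\sX$ is, by definition, to say $j^*E_\bullet$ is absolutely acyclic in $D^{abs}[\Fact(\sU,0)]$; likewise $E_\bullet$ locally contractible off $\sX$ means $j^*E_\bullet$ is locally contractible on $\sU$ (here one uses that a smooth atlas of $\sY$ restricts to a smooth atlas of $\sU$, so that contractibility of the pullback is inherited). On the cohomological side, since $j^*$ is exact we have $H_\pm(j^*E_\bullet) = j^*H_\pm(E_\bullet)$, and hence $H_\pm(E_\bullet)$ is supported on $\sX$ if and only if $H_\pm(j^*E_\bullet) = 0$; that is, $E_\bullet$ is cohomologically acyclic off $\sX$ if and only if $j^*E_\bullet$ is cohomologically acyclic on $\sU$ in the sense of \defref{A:defn: cohomological}.

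With this dictionary in place, (i) is exactly \lemref{A:lem:comparision}(i) for the pair $(\sU, j^*E_\bullet)$, and (ii) is \lemref{A:lem:comparision}(ii) for the same pair. For (iii), I would first note that $j^*$ commutes with the tensor product of $2$-periodic complexes --- this is immediate from the explicit formulas defining $E_\bullet \otimes F_\bullet$, or from the compatibility of the folding functor $\Upsilon$ with both $j^*$ and $\otimes$ --- so that $j^*(E_\bullet \otimes F_\bullet) = j^*E_\bullet \otimes j^*F_\bullet$, and that $j^*E_\bullet$ is again a $2$-periodic complex of vector bundles. Applying \lemref{A:lem:comparision}(iii) on $\sU$ to $j^*E_\bullet$ (locally contractible on $\sU$ by hypothesis) and $j^*F_\bullet$ shows that $j^*E_\bullet \otimes j^*F_\bullet$ is locally contractible on $\sU$, i.e.\ $E_\bullet \otimes F_\bullet$ is locally contractible off $\sX$; the ``in particular'' clause then follows by invoking part (ii), already established.

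I do not expect any genuine obstacle: the entire mathematical content sits in \lemref{A:lem:comparision}, and the corollary is a bookkeeping statement obtained by restricting along $j$. The only points deserving a word of care are the exactness of $j^*$ (so it commutes with $H_\pm$), its compatibility with $\otimes$ of $2$-periodic complexes, and the fact that ``locally contractible'' is defined via a smooth atlas and therefore behaves well under the open restriction $\sU \to \sY$ --- all of which are routine.
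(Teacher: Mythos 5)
Your proof is correct and follows the paper's route exactly: the paper simply states that the corollary is obtained from Lemma~\ref{A:lem:comparision} ``by restricting to the complement of the respective supports,'' and your write-up is precisely that restriction argument, carried out with the (routine but welcome) details about exactness of $j^*$, its compatibility with $\otimes$, and the behaviour of local contractibility under open restriction made explicit.
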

In the next remark we briefly explain how \cite[Proposition 2.2.8 \& Proposition 2.2.10]{MF} and their proofs can applied in our context. 

\begin{rem}[Proper Pushforward and the projection formula]\label{rem:Properpushforwardandprojectionformula}
 Let $g: \sY' \to \sY$ be a proper representable morphism. Then $Rg_*: D(\QCoh \sY') \to D(\QCoh\sY)$ has finite cohomological dimension. Now following {\cite[Definition 2.2.4]{MF} \& \cite[Theorem 1.17]{Nironi}}, 
 for any quasi-coherent sheaf we can find a finite resolution by $f_*$-acyclic sheaves which allows us to  conclude that there is a well defined functor $Rg_*: D^{abs}[\Fact(\sY',0)] \to D^{abs}[\Fact(\sY,0)]$.
 
 Let $E$ be a vector bundle on $\sY$, then the usual projection formula implies that $g^*E \otimes F$ is $f_*$-acyclic when $F$ is a $f_*$-acyclic sheaf. Arguing as in \cite[Proposition 2.2.10]{MF} we conclude that
   for a $2$-periodic complex of vector bundles $E_\bullet$ on $\sY$ the projection formula holds, that is 
$Rg_* (g^*E_\bullet \otimes F_\bullet) \cong E_\bullet \otimes Rg_* F_\bullet$. 

\end{rem}

\subsection{The Map $\h^{\sY}_{\sX}(E_{\bullet})$}
Let $\sY$ be a $\DM$-stack and let $\sX \xrightarrow{i} \sY$ be a closed substack. Let $E_{\bullet}$ be a $2$-periodic complex of vector bundles on $\sY$ such that 
$E_{\bullet}$ is locally contractible off $\sX$. In this section we would like to define a map $\h^{\sY}_{\sX}(E_{\bullet}): G_0(\sY) \to G_0(\sX)$ (see \defref{Defn:$h^Y_X$}) which would 
be the $K$-theoretic localized Chern character map for $2$-periodic complexes. We first need the following elementary lemma.
 
\begin{lem}\label{lem: welldefined}
	Let $$0 \ra E^1_\bullet \xrightarrow{f_{\bullet}} E^2_\bullet  \xrightarrow{g_{\bullet}} E^3_\bullet \ra 0,$$ be an exact sequence of $2$-periodic complexes of coherent sheaves on $\sY$ which are  cohomologically supported on $\sX$. Then 
	$$[H_+(E^2_\bullet)] - [H_-(E^2_\bullet)] =[ H_+(E^1_\bullet)] - [H_-(E^1_\bullet)] + [H_+(E^3_\bullet)] - [H_-(E^3_\bullet)]$$
in $K_0(\Coh_{\sX} \sY)$. 
\end{lem}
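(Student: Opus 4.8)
The plan is to exploit the long exact sequence in cohomology attached to a short exact sequence of $2$-periodic complexes, which exists precisely because the differentials square to zero. Given the short exact sequence $0 \to E^1_\bullet \to E^2_\bullet \to E^3_\bullet \to 0$, applying the snake lemma degreewise to the two copies of the underlying periodic complex produces a six-term exact sequence of cohomology sheaves which, by $2$-periodicity, closes up into a hexagon:
\begin{align*}
H_+(E^1_\bullet) \to H_+(E^2_\bullet) \to H_+(E^3_\bullet) \to H_-(E^1_\bullet) \to H_-(E^2_\bullet) \to H_-(E^3_\bullet) \to H_+(E^1_\bullet),
\end{align*}
the last arrow being the connecting map, and the sequence being exact everywhere (this is the standard "long exact sequence of a $2$-periodic complex", cf.\ the matrix factorization literature). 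First I would write this hexagon down carefully, checking that the connecting homomorphisms are the ones induced by the snake lemma applied to the ambient unbounded complexes, and noting that every term in it is a coherent sheaf supported on $\sX$ by hypothesis (the connecting maps are maps of sheaves supported on $\sX$, and kernels/cokernels of such are again supported on $\sX$), so all the classes live in $K_0(\Coh_\sX \sY) \cong G_0(\sX)$.

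Next I would take the alternating sum of classes around the hexagon. Since an exact sequence of length $n$ of coherent sheaves (here $n = 6$, but one can equally break it into short exact pieces) gives the relation $\sum (-1)^i [A_i] = 0$ in the Grothendieck group, applying this to the hexagon above and grouping the terms according to whether they come from $E^1_\bullet$, $E^2_\bullet$, or $E^3_\bullet$ yields exactly
\begin{align*}
\big([H_+(E^1_\bullet)] - [H_-(E^1_\bullet)]\big) - \big([H_+(E^2_\bullet)] - [H_-(E^2_\bullet)]\big) + \big([H_+(E^3_\bullet)] - [H_-(E^3_\bullet)]\big) = 0,
\end{align*}
which is the claimed identity after rearranging. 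The bookkeeping of signs is the only place one must be slightly careful: one should fix the cyclic order of the hexagon and verify that the six summands carry alternating signs, so that the three "$+$"-terms and three "$-$"-terms land on the correct sides; since the hexagon has even length this is consistent.

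The main (and really only) obstacle is establishing the exactness of the hexagon itself at the level of coherent sheaves, i.e.\ producing the connecting homomorphism $H_\pm(E^3_\bullet) \to H_\mp(E^1_\bullet)$ and checking exactness at all six spots. This is a routine diagram chase — essentially the snake lemma applied twice and then spliced using $d_+ d_- = d_- d_+ = 0$ — but it is the step that uses the $2$-periodic structure in an essential way (an analogous statement fails for arbitrary factorizations, where cohomology sheaves are not defined). Everything after that is formal manipulation in $K_0(\Coh_\sX \sY)$, using only that this group is isomorphic to $G_0(\sX)$ via d\'evissage as recalled in \secref{Convention}. I would therefore spend the bulk of the written proof on the hexagon and dispatch the alternating-sum step in one line.
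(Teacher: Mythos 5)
Your proof is correct and is essentially the paper's argument: the paper also establishes the periodic long exact sequence in cohomology (written out via an explicit snake-lemma diagram chase as two spliced five-term exact sequences, equivalent to your hexagon cut at the maps $H_\pm(E^1_\bullet)\to H_\pm(E^2_\bullet)$) and then takes alternating sums in $K_0(\Coh_{\sX}\sY)$. Your observation that the even length of the hexagon is what makes the sign bookkeeping close up is exactly the point, and the support condition is handled the same way.
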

\begin{proof}
	
	Let us denote $d^i_{\pm}$ the the differentials $E^i_{\pm} \to E^i_{\mp}$. Let $K_\pm$ be the kernel of $\text{Im} d^2_\pm \ra \text{Im} d^3_\pm$.
 Consider the following commutative diagram,

	\begin{equation}\label{Diag:lem 1.2}
	\xymatrix{
		0 \ar[r]  & K_+ \ar[r]\ar[d] & \text{Im} d^2_+ \ar[d] \ar[r] & \text{Im} d^3_+ \ar[d] \ar[r] & 0 \\
		0 \ar[r]  & E^1_- \ar[r]\ar[d] & E^2_- \ar[d] \ar[r] & E^3_- \ar[d] \ar[r] & 0 \\
		0 \ar[r]  & K_- \ar[r] & \text{Im} d^2_-  \ar[r] & \text{Im} d^3_-  \ar[r] & 0.
	}
	\end{equation}
	
	\noindent	As $E^1_-$ is the kernel of the map $E^2_- \to E^3_-$, by the universal property of the kernel there exists a map $K_+ \to E^1_-$ such that the top left square commutes. In particular, we note that all the vertical maps in the top rectangle of \eqref{Diag:lem 1.2} are injective, therefore we can consider 
	$K_+ \subset E^1_-$. 
	Since $\text{Im} d^1_\pm \ra \text{Im} d^2_\pm$ are injective, $\text{Im} d^2_\pm \ra \text{Im} d^3_\pm$ are surjective and their compositions are zero morphisms, we obtain $\text{Im} d^1_\pm \subset K_\pm$ by the universal property.
	Hence, there exists a morphism $E^1_- \ra K_-$ which makes the left bottom square to be commutative.
	On the other hand, a composition of an inclusion $K_+ \subset E^1_- \subset E^2_-$ with $d^2_-$ is zero. Thus, the composition of the inclusion $K_+ \subset E^1_-$ with $d^1_-$ is zero. Again by the universal property, we obtain $K_+ \subset \Ker d^1_-$ (and similarly we have $K_- \subset \Ker d^1_+$).
	It implies that the composition of the  left vertical map in the above diagram is zero.

	By taking the quotient of the middle exact sequence by the top exact sequence in diagram \eqref{Diag:lem 1.2} we obtain  the following commutative diagram. 
	\begin{equation}\label{Diag:lem 1.2-2}
	\xymatrix{
		0 \ar[r]  & E^1_-/ K_+  \ar[r]\ar[d] & E^2_-/\text{Im} d^2_+ \ar[d] \ar[r] & E^3_-/\text{Im} d^3_+ \ar[d] \ar[r] & 0 \\
		0 \ar[r]  & K_- \ar[r] & \text{Im} d^2_-  \ar[r] & \text{Im} d^3_-  \ar[r] & 0.
	}
	\end{equation}
	
	\noindent From the snake lemma applied to diagram \eqref{Diag:lem 1.2-2} we get the following exact sequence
	
	\begin{equation}\label{eqn: 1}
	0 \ra \frac{\Ker d^1_-}{K_+} \ra H_-(E^2_\bullet) \ra H_-(E^3_\bullet) \ra \frac{K_-}{\text{Im}d^1_-} \ra 0
	\end{equation}
	Further, by $\text{Im} d^1_\pm \subset K_\pm \subset \Ker d^1_\mp$, we have the following exact sequence
	\begin{equation}\label{eqn:2}
	0 \ra \frac{K_-}{\text{Im}d^1_-} \ra \frac{\text{Ker}d^1_+}{\text{Im}d^1_-} \ra \frac{\text{Ker}d^1_+}{K_-} \ra 0
	\end{equation}
	
	Using \eqref{eqn:2} in \eqref{eqn: 1} we get the following exact sequence
	\begin{equation}\label{eqn:3}
	0 \ra \frac{\Ker d^1_-}{K_+} \ra H_-(E^2_\bullet) \ra H_-(E^3_\bullet) \ra H_+(E^1_\bullet) \ra \frac{\Ker d^1_+}{K_-} \ra 0.
	\end{equation}

	\noindent Similarly, we have 
	\begin{equation}\label{eqn:4}
	0 \ra \frac{\Ker d^1_+}{K_-} \ra H_+(E^2_\bullet) \ra H_+(E^3_\bullet) \ra H_-(E^1_\bullet) \ra \frac{\Ker d^1_-}{K_+} \ra 0.
	\end{equation}
	
	\noindent It follows from \eqref{eqn:3} and \eqref{eqn:4} that we have the following equality, 
	
	\begin{equation}\label{eqn:5}
[	H_+(E^2_\bullet)] - [H_-(E^2_\bullet)] = [H_+(E^1_\bullet)] - [H_-(E^1_\bullet)] +[ H_+(E^3_\bullet)] - [H_-(E^3_\bullet)],
	\end{equation}
	in $K_0(\Coh_{\sX} \sY)$.
	
\end{proof}

	Let $$0 \to \mathcal{F}_1 \to \mathcal{F}_2 \to \mathcal{F}_3 \to 0,$$
	be an exact sequence in $\Coh(\sY)$ and let $E_{\bullet}$ be a $2$-periodic complex of vector bundles on $\sY$ such that $E_{\bullet}$ is locally contractible off $\sX$. Then we have,   
	\begin{equation}
0 \to E_{\bullet} \ot \Upsilon(\sF_1) \to E_{\bullet} \ot \Upsilon(\sF_2) \to E_{\bullet} \ot \Upsilon(\sF_3) \to 0,
	\end{equation}
is an exact sequence of $2$-periodic complexes on $\sY$. Further, from \corref{A:Cor-Comparision}(iii) it follows that  $E_{\bullet} \ot \Upsilon(\sF)$ is cohomologically supported on $\sX$. Now \lemref{lem: welldefined}  allows us to make the following definition.

\begin{Def}\label{Defn:$h^Y_X$}
	Let $E_{\bullet}$ be a $2$-periodic complex of vector bundles on $\sY$ such that $E_{\bullet}$ is locally contractible off $\sX$. We define  $$\h ^{\sY}  _{\sX}  (E_\bullet): G_0(\sY) \rightarrow G_0(\sX)$$
	by sending a coherent sheaf $\mathcal{G}$ on $\sY$ to
	\begin{align*}
	\h^{\sY}  _{\sX}  (E_\bullet) ({\mathcal{G}})  : =   [H_+( E_\bullet \otimes {\Upsilon(\mathcal{G})})] - [H_-(E_\bullet \otimes {\Upsilon(\mathcal{G})})].
	\end{align*}
It follows from \lemref{lem: welldefined} that the map $\h^{\sY}  _{\sX}  (E_\bullet)$ extends  linearly to define a group homomorphism  $G_0(\sY) \to G_0(\sX)$, where we have implicitly used the isomorphism  $G_0(\sX) \xrightarrow{\cong} K_0(\Coh_{\sX} \sY)$. 
	
\end{Def} 

\subsection{Some Functorialities}\label{Sec:FunctorialityProperties}
In the rest of this section we prove some functoriality properties of the map $\h^\sY_\sX(E_\bullet)$ defined in \defref{Defn:$h^Y_X$}. If $E_\bullet$ is a folding of a bounded complex of vector bundles then there is nothing to prove. For our application we 
need these properties when $E_\bullet$ is not a folding of a bounded complex, in particular for Koszul $2$-periodic complexes (\defref{defn:Koszul2periodicComplex}). The strategy of proof is to reduce the arguments to the folding complexes  via Construction \ref{construction: Homotopyinvarianceconstruction} 
and a $\A^1$-homotopy invariance argument. We first make the following definition which is a  Gysin pull-back for $2$-periodic complexes.
\begin{defn}[$\lambda^!$ of a $2$-periodic complex]\label{defn:2periodicgysin}
Let $\widetilde{\sY}$ be a stack over $\mathbb{A}^1_k$ with a  closed substack $\sX \times_k \mathbb{A}^1_k \subset \widetilde{\sY}$ such that the composition $\sX \times_k \mathbb{A}^1_k \subset \widetilde{\sY} \ra \mathbb{A}^1_k$ coincides with the projection.
Let $\sE_\bullet$ be a $2$-periodic complex of coherent sheaves on $\widetilde{\sY}$ such that $\sE_\bullet$ is cohomologically supported on ${\sX \times_k \mathbb{A}^1_k}$.
Consider the following cartesian square,
\begin{equation}
	\xymatrix{
	\widetilde{\sY}|_\lambda \ar[r] \ar[d] & \widetilde{\sY} \ar[d]^p \\
	\Spec~ k \ar[r]_\lambda &            \A^1_k,
}
\end{equation}
where the bottom horizontal map is the regular embedding of the closed point $\lambda$. We have the natural resolution by vector bundles of $\lambda_*\sO_{\Spec k}$ given by $Q_{\bullet} := 0 \to k[t] \xrightarrow{t-\lambda} k[t] \to 0$.  Analogous to the refined Gysin map for $K$-theory (see \cite[\S~2]{YPLee}), let $P_{\bullet} := \Upsilon(Q_{\bullet})$ then we define  $\lambda^!(\sE_{\bullet}) := \sE_{\bullet} \otimes p^*P_{\bullet}$. Note that $\lambda^!(\sE_{\bullet})$ is  cohomologically supported on  $\sX \times {\lambda}$.
\end{defn}

	 The notation we use in \defref{defn:2periodicgysin} for the $2$-periodic Gysin map might appear to be slightly confusing to the reader. $\lambda^!$ is generally reserved for the Gysin map defined at the level of $K$-theory. The reason for this abuse of notation would be apparent from the following lemma.

\begin{lem}\label{lem:gysin}
    Let ${\sY} \times_k \A^1_k$ be a $\DM$-stack and let $\sX \times_k \A^1_k \inj \sY \times_k \A^1_k$ be a closed substack. Let $\sE_{\bullet}$ be a  $2$-periodic complex of coherent sheaves on $\sY \times_k \A^1_k$ cohomologically supported on $\sX \times_k \A^1_k$. Let $0^!(\sE_{\bullet})$ be defined as in \defref{defn:2periodicgysin}. Then, 
    \begin{equation}
        0^!([H_+(\sE_{\bullet})] - [H_-(\sE_{\bullet})]) = [H_+(0^!\sE_{\bullet})] - [H_-(0^!\sE_{\bullet})],
    \end{equation}
    in $G_0(\sX \times \Spec~ k)$ where $0^!([H_+(\sE_{\bullet})] - [H_-(\sE_{\bullet})])$ denotes the usual Gysin pull-back in $K$-theory (see \cite[\S~2]{YPLee}).
\end{lem}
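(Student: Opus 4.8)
The plan is to factor the $K$-theoretic Gysin map through the brutal truncation of the Koszul resolution $Q_\bullet$ and then to compare long exact sequences in cohomology. First I would record the concrete shape of $0^!$ on $K$-theory: since $\Spec k \xrightarrow{0}\A^1_k$ is a regular embedding of codimension one resolved by $Q_\bullet = [\cO_{\A^1}\xrightarrow{\,t\,}\cO_{\A^1}]$, the refined Gysin map of \cite[\S~2]{YPLee} sends the class of a coherent sheaf $\sF$ on $\sX\times_k\A^1_k$ to
\begin{equation*}
0^![\sF] \;=\; \big[\sF\otimes^{\mathbf L}_{\cO_{\A^1}}\cO_{\{0\}}\big] \;=\; [\sF/t\sF]-[\sF[t]]\ \in\ G_0(\sX\times\Spec k),
\end{equation*}
where $\sF[t]=\ker(t\colon\sF\to\sF)$; this is the expression I want to recover from the cohomology sheaves of $0^!\sE_\bullet$.

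Next I would realise $0^!\sE_\bullet=\sE_\bullet\otimes p^*P_\bullet$ as a $2$-periodic mapping cone. The brutal truncation of $Q_\bullet$ is a termwise split short exact sequence of complexes of vector bundles exhibiting $Q_\bullet$ as the cone of $t\colon\cO_{\A^1}\to\cO_{\A^1}$, with the two copies of $\cO_{\A^1}$ placed in degrees $0$ and $1$. Applying $\Upsilon$, pulling back along $p$, and tensoring with $\sE_\bullet$ — using that $\Upsilon$ preserves degreewise exact sequences and commutes with $\otimes$, that $\sE_\bullet\otimes p^*\Upsilon(\cO_{\A^1})\cong\sE_\bullet$, and that a termwise split exact sequence stays exact after tensoring with $\sE_\bullet$ even though $\sE_\bullet$ need not consist of vector bundles — produces a termwise split short exact sequence of $2$-periodic complexes
\begin{equation*}
0\longrightarrow\sE_\bullet\longrightarrow\sE_\bullet\otimes p^*P_\bullet\longrightarrow\sE_\bullet[1]\longrightarrow0 ,
\end{equation*}
which identifies $0^!\sE_\bullet$ with the $2$-periodic cone of multiplication by $t$ on $\sE_\bullet$. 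The only delicate point is to verify that the connecting homomorphism of this sequence is multiplication by $t$, with the correct sign; this follows from the mapping-cone description, or can be checked by hand from the explicit matrices for $d^{E\otimes P}_\pm$ in the tensor-product formula applied to $P_\bullet=\Upsilon(Q_\bullet)$.

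Finally I would pass to the long exact cohomology sequence of the last displayed short exact sequence. Because all complexes are $2$-periodic and $H_\pm(\sE_\bullet[1])=H_\mp(\sE_\bullet)$, it is a $2$-periodic exact sequence in which every third arrow is multiplication by $t$, so it breaks into the two short exact sequences
\begin{equation*}
0\to H_\pm(\sE_\bullet)/t\,H_\pm(\sE_\bullet)\to H_\pm(0^!\sE_\bullet)\to H_\mp(\sE_\bullet)[t]\to0 .
\end{equation*}
All sheaves here are coherent and supported on $\sX\times\{0\}$. Writing $H_\pm$ for $H_\pm(\sE_\bullet)$ and taking classes in $G_0(\sX\times\Spec k)$, subtraction of the ``$-$'' identity from the ``$+$'' identity gives
\begin{equation*}
[H_+(0^!\sE_\bullet)]-[H_-(0^!\sE_\bullet)]=\big([H_+/tH_+]-[H_+[t]]\big)-\big([H_-/tH_-]-[H_-[t]]\big),
\end{equation*}
which by the first step equals $0^![H_+]-0^![H_-]=0^!\big([H_+(\sE_\bullet)]-[H_-(\sE_\bullet)]\big)$, the asserted identity. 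I expect the second step — correctly identifying the connecting maps and their signs, so that the alternating sum collapses exactly to the $K$-theoretic Gysin class (rather than its negative) — to be the only genuine obstacle; everything else is a formal consequence of the long exact sequence and the additivity recorded in \lemref{lem: welldefined}.
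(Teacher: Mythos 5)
Your argument is correct and is in substance the same as the paper's: the paper invokes the spectral sequence of the two-column double complex whose totalization is $0^!\sE_\bullet$, and since that spectral sequence degenerates at $E_2$ it yields exactly your short exact sequences $0\to H_\pm(\sE_\bullet)/tH_\pm(\sE_\bullet)\to H_\pm(0^!\sE_\bullet)\to H_\mp(\sE_\bullet)[t]\to 0$, which you instead extract from the long exact sequence of the cone of $t$. Your version just makes explicit the identification $0^!\sE_\bullet\cong\mathrm{Cone}(t)$ and the comparison with $0^![\sF]=[\sF/t\sF]-[\sF[t]]$, both of which the paper leaves implicit.
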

\begin{proof}
The proof follows from the convergence of the spectral sequence defined by the following  double complex,
\[
\xymatrix{
 & \vdots & \vdots & \\
0 \ar[r] & \sE_+ \ar[r]^-t \ar[u] & \sE_+ \ar[r] \ar[u] & 0 \\
0 \ar[r] & \sE_- \ar[r]^-t \ar[u]^-{d_-} & \sE_- \ar[r] \ar[u]^{d_-} & 0 \\
0 \ar[r] & \sE_+ \ar[r]^-t \ar[u]^-{d_+} & \sE_+ \ar[r] \ar[u]^{d_+} & 0 \\
 & \vdots \ar[u] & \vdots \ar[u] & .\\
}
\]
Note that $0^!\sE_\bullet$ is a total complex of the above double complex.
\end{proof}

\begin{constr}\label{construction: Homotopyinvarianceconstruction}
	Following \cite[Proposition 2.5.8]{MF},  we consider the following construction. As before let $\sY$ be a $\DM$-stack and let $\sX$ be a closed substack. 
	Let $A_{\bullet}$ be a $2$-periodic complex of coherent sheaves on $\sY$ such that $A_{\bullet}$ is cohomologically supported on ${\sX}$,
	\begin{equation}
		\ldots \to A_+ \xrightarrow{d_+} A_- \xrightarrow{d_-} A_+ \xrightarrow{d_+} A_- \to \ldots.
	\end{equation}
Let $I_{\bullet}$ be the folding of the bounded complex $0 \to Img~d_- \xrightarrow{0} Img~ d_+ \to 0$ and similarly let $K_{\bullet}$ denote the folding of the complex $0 \to \Ker~ d_- \xrightarrow{0} \Ker~ d_+ \rightarrow 0$. The inclusion maps $Img \ d_{\mp} \inj \Ker d_{\pm}$ induces a map $I_{\bullet}[1] \to K_{\bullet}$. We thus have the following triangle in $D(\sY,0)$, 
\begin{equation}
I_{\bullet}[1] \to K_{\bullet} \to C_{\bullet} \xrightarrow{+1},
\end{equation}
where $C_{\bullet}$ is the mapping cone. Note that by construction $C_{\bullet}$ is an object of $D(\sY,0)$ which is cohomologically supported on $\sX$.
Let $H_{\bullet}$ be the $2$-periodic complex given by $H_+(A_{\bullet})$ in even degree and $H_-(A_{\bullet})$ in odd degree where both the odd and even differentials are zero, that is $H_{\bullet} := \Upsilon(H_+(A_{\bullet}) \oplus (\Upsilon(H_-(A_{\bullet}))[1])$. 
The two exact sequences, 
$$0 \to Img~d_{\mp} \to \Ker d_{\pm} \to H_{\pm}(A_{\bullet}) \to 0,$$
induce an isomorphism, 
\begin{equation}\label{eq:constrfoldingandcohomology}
C_{\bullet} \xrightarrow{\cong} H_{\bullet},
\end{equation}
 in $D(\sY,0)$. 
 There is a natural map $I_\bullet[1] \xrightarrow{i} A_{\bullet}$ induced by the inclusion $I_{\pm} \xrightarrow{i} A_{\mp}$. Let $B_{\bullet} := Cone(I_{\bullet}[1] \xrightarrow{i} A_{\bullet})$.  Let $\A^1_k = \Spec~k[t]$ and let $p: \sY \times_k \A^1_k \to \sY$ be the projection to $\sY$. Let $B[t]_{\bullet} := p^*(B_{\bullet})$, in particular we have $B[t]_{+} = A_+[t] \oplus I_+[t]$ and $B[t]_- = A_-[t] \oplus I_-[t]$.
Let us define the $f_{\bullet}: B[t]_{\bullet} \to I[t]_{\bullet}$ as follows, 
\begin{align*}
f_+: = d^{A[t]}_+ - t  id_{I_+[t]} &: A_{+}[t] \oplus I_+[t] \ra I_+[t], \\
f_- : = d^{A[t]}_- - t  id_{I_-[t]} &: A_{-}[t] \oplus I_-[t] \ra I_-[t].
\end{align*}

Note that $f_{\bullet}$ is a surjective map of complexes and let us denote by $\mathcal{A}_{\bullet}$ the kernel of $f_{\bullet}$. 
We have the following triangle in $D(\sY \times \A^1_k,0)$, 
\begin{equation}\label{constr:eq1}
	\mathcal{A}_{\bullet} \xrightarrow{t} \mathcal{A}_{\bullet} \xrightarrow{} \text{Cone}(t) \xrightarrow{+1},
\end{equation} 
where $\text{Cone}(t)$ is the cone of the map $\sA_{\bullet} \xrightarrow{t} \sA_{\bullet}$ which coincides with $0^!\sA_{\bullet}$ as in \defref{defn:2periodicgysin}. Further, by construction we have the following exact sequence of $2$-periodic complexes, 
\begin{equation}\label{constr:eq2}
	0 \to \mathcal{A}_{\bullet} \xrightarrow{t} \mathcal{A}_{\bullet} \to \mathcal{A}_{\bullet}/t\mathcal{A}_{\bullet} \to 0. 
\end{equation}
Combining \eqref{constr:eq1} and \eqref{constr:eq2} we have an isomorphism $0^! \mathcal{A}_{\bullet} \to \mathcal{A}/t\mathcal{A}$ in $D(\sY \times_k \A^1_k)$ where both the complexes are cohomologically supported on $\sX \times 0$. Let $j_0: \sY \times \{0\} \to \sY \times_k \A^1_k$ be the natural closed 
immersion. It follows from \eqref{constr:eq2} and the definition of $\mathcal{A}_\bullet$ that, 
\begin{equation}\label{constr:gysin0}
j_{0*} C_{\bullet} \cong 0^! \mathcal{A}_\bullet.
\end{equation}
  Repeating the above argument but this time for the inclusion $\Spec~k[t]/[t-1] \to \A^1_k$, that is corresponding 
to the closed point $1$ and with $j_1: \sY \times \{1\} \to \sY \times \A^1_k$ we can on the other hand  conclude  that, 
\begin{equation}\label{constr:gysin1}
 j_{1*} A_{\bullet} \cong 1^! \mathcal{A}_{\bullet}.
\end{equation}
 
  The  complex  $\sA_\bullet$  on $\sY \times_k \A^1_k$ gives us a deformation of our original complex $A_\bullet$ to a folding complex $C_\bullet$.
\end{constr}

\begin{lem}\label{Ring:hom}
	Let $\sY$ be a $\DM$-stack and let $\sX_1$ and $\sX_2$ be two closed substacks of $\sY$. Let   $E^1_{\bullet}$ and $E^2_{\bullet}$ be $2$-periodic complexes on $\sY$ such that $E^i_{\bullet}$ is locally contractible off $\sX_i$. Let $\sX = \sX_1 \cap \sX_2$ and let $i_1: \sX_1 \inj \sY$ denote the 
	closed immersion. Then we have, 

		\begin{equation}
		\h^{\sY}_{\sX}( E^1_\bullet \ot E^2_\bullet)(-)  = \h^{\sX_1}_{\sX}(i_1^*E^2_\bullet) ( \h^{\sY}_{\sX_1}(E^1_{\bullet})(-)).
		\end{equation}    
\end{lem}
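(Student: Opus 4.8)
The plan is to prove the identity by first reducing to the case where both $E^1_\bullet$ and $E^2_\bullet$ are foldings of bounded complexes of vector bundles, where the statement becomes a standard compatibility of localized Chern characters (or, in $K$-theory, of the ``tensor then take alternating cohomology'' operation) with composition. The reduction is carried out via Construction \ref{construction: Homotopyinvarianceconstruction}: given the $2$-periodic complex $E^1_\bullet$, locally contractible off $\sX_1$, form the deformation $\sA_\bullet$ on $\sY\times_k\A^1_k$ together with the triangle $I_\bullet[1]\to K_\bullet\to C_\bullet\xrightarrow{+1}$, where $C_\bullet$ is a folding complex, cohomologically supported on $\sX_1$. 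By \eqref{constr:gysin0} and \eqref{constr:gysin1} we have $j_{0*}C_\bullet\cong 0^!\sA_\bullet$ and $j_{1*}E^1_\bullet\cong 1^!\sA_\bullet$ (after pulling $A_\bullet=E^1_\bullet$ back from $\sY$), and by homotopy invariance of $G_0$ under $p^*:\;G_0(\sY)\xrightarrow{\cong}G_0(\sY\times_k\A^1_k)$ together with \lemref{lem:gysin} the classes $0^!$ and $1^!$ of any fixed class on the total space agree in $G_0(\sX_1)$. First I would use this to replace $E^1_\bullet$ by the folding complex $C_\bullet$ in the formula; since $E^2_\bullet$ is a complex of vector bundles, tensoring with $p^*E^2_\bullet$ is exact, so $E^2_\bullet\ot\sA_\bullet$ is again a valid deformation (cohomologically supported on $\sX_1\cap\sX_2$ over $\A^1_k$ by \corref{A:Cor-Comparision}(iii)), and the same $\A^1$-invariance argument lets me replace $E^1_\bullet$ by $C_\bullet$ also in the left-hand side $\h^\sY_\sX(E^1_\bullet\ot E^2_\bullet)$. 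Then I would run the analogous deformation for $i_1^*E^2_\bullet$ on $\sX_1\times_k\A^1_k$ to reduce $E^2_\bullet$ to a folding complex as well, checking that $\h^\sY_{\sX_1}(E^1_\bullet)(-)$ intertwines the two deformations — this is where the hypothesis that $E^1_\bullet$ is a complex of \emph{vector bundles} is again used, so that $E^1_\bullet\ot(-)$ is exact and commutes with the $0^!/1^!$ operations and with taking cohomology in the way \lemref{lem:gysin} encodes.

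Once both complexes are foldings of bounded complexes $P^1_\bullet$, $P^2_\bullet$ of vector bundles, the left side sends a coherent sheaf $\sG$ to the alternating sum of the cohomology sheaves of $(P^1_\bullet\ot P^2_\bullet)\ot\sG$, which by associativity of the tensor product is $P^1_\bullet\ot(P^2_\bullet\ot\sG)$; and the right side first forms $\h^\sY_{\sX_1}(P^1_\bullet)(\sG)=\sum_\pm(-1)^\pm[H_\pm(P^1_\bullet\ot\sG)]$, a class on $\sX_1$, then applies $\h^{\sX_1}_\sX(i_1^*P^2_\bullet)$ to it. The identity then follows from the fact that for a bounded complex $F_\bullet$ of vector bundles on $\sY$, the endofunctor $F_\bullet\ot(-)$ on $D^b$ descends to multiplication by $[F_\bullet]=\sum(-1)^i[F_i]$ on $G_0$, so the alternating cohomology of $P^1_\bullet\ot P^2_\bullet\ot\sG$ equals $[P^1_\bullet]\cdot[P^2_\bullet\ot\sG]=[P^1_\bullet]\cdot[P^2_\bullet]\cdot[\sG]$, computed on $\sX$; combined with base change $i_1^*[P^2_\bullet]$ this is exactly $\h^{\sX_1}_\sX(i_1^*P^2_\bullet)(\h^\sY_{\sX_1}(P^1_\bullet)(\sG))$. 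Here one uses the d\'evissage isomorphisms $G_0(\sX_i)\xrightarrow{\cong}K_0(\Coh_{\sX_i}\sY)$ and $G_0(\sX)\xrightarrow{\cong}K_0(\Coh_\sX\sY)$ freely, and the projection formula of \remref{rem:Properpushforwardandprojectionformula} is not needed for this case but its bounded analogue — that pushforward along $\sX_1\inj\sY$ of $i_1^*P^2_\bullet\ot(-)$ equals $P^2_\bullet\ot i_{1*}(-)$ — is what makes the two sides literally the same class in $K_0(\Coh_\sX\sY)$.

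The main obstacle I expect is bookkeeping in the reduction step: one must ensure that \emph{both} occurrences of $E^1_\bullet$ (inside the composite tensor product on the left, and as the ``outer'' map $\h^\sY_{\sX_1}$ on the right) are deformed compatibly by a \emph{single} family over $\A^1_k$, namely by tensoring the Construction \ref{construction: Homotopyinvarianceconstruction} deformation $\sA^1_\bullet$ of $E^1_\bullet$ with $p^*E^2_\bullet$, and that all the intervening objects stay cohomologically supported on the correct substack $\sX\times 0$ (resp.\ $\sX\times\A^1_k$) so that \lemref{lem: welldefined} and \lemref{lem:gysin} apply. A secondary point of care is that the deformation of $E^2_\bullet$ happens on $\sX_1$, not on $\sY$, so one must verify that the restriction $i_1^*$ commutes with the folding-deformation construction and that the homotopy invariance isomorphism $p^*: G_0(\sX_1)\xrightarrow{\cong}G_0(\sX_1\times_k\A^1_k)$ is the one being used; once these compatibilities are pinned down, the actual computation collapses to associativity of $\ot$ and additivity of alternating cohomology along exact sequences, which is \lemref{lem: welldefined}.
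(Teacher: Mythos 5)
Your reduction step does not work as stated, and this is a genuine gap rather than a bookkeeping issue. Construction \ref{construction: Homotopyinvarianceconstruction} does \emph{not} deform a $2$-periodic complex to a folding of a bounded complex of \emph{vector bundles}: the terms of $C_\bullet$ are built from $\mathrm{Img}\,d_\pm$, $\Ker d_\pm$ and the cohomology sheaves, which are only coherent. So your endgame --- ``once both complexes are foldings of bounded complexes $P^1_\bullet, P^2_\bullet$ of vector bundles, the operation is multiplication by $[P^i_\bullet]$ in $G_0$'' --- is never reached. Indeed it cannot be reached in the generality of the lemma: a Koszul $2$-periodic complex $\{\alpha,\beta\}$ locally contractible off $\sX_1$ is in general not equivalent to any bounded complex of vector bundles supported there, which is precisely why the localized invariants of $2$-periodic complexes are not reducible to ordinary $K$-theory classes. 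A second problem is that $\h^{\sY}_{\sX_1}(C_\bullet)$ is not even defined by \defref{Defn:$h^Y_X$}, since $C_\bullet$ is not a complex of vector bundles and tensoring with it is not exact, so ``replacing $E^1_\bullet$ by $C_\bullet$ in the formula'' on the right-hand side has no meaning; relatedly, the family $\sA_\bullet$ obtained from $E^1_\bullet$ alone is not a complex of vector bundles, so \corref{A:Cor-Comparision}(iii) does not give the support statement you need for $\sA_\bullet\otimes p^*\Upsilon(\mathcal G)$, and $C_\bullet\otimes\Upsilon(\mathcal G)$ (underived) is not computed by multiplying classes.

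The fix, which is the route the paper takes, is to apply the deformation not to $E^1_\bullet$ but to $G_\bullet:=E^1_\bullet\otimes\Upsilon(\mathcal G)$ for a fixed coherent sheaf $\mathcal G$. Then $C_\bullet\cong H_\bullet$, the folding of the cohomology sheaves $H_\pm(G_\bullet)$ with zero differentials, and the right-hand side of the lemma is literally $[H_+(E^2_\bullet\otimes H_\bullet)]-[H_-(E^2_\bullet\otimes H_\bullet)]$ by the projection formula along $i_{1}$. The left-hand side is $[H_+(E^2_\bullet\otimes G_\bullet)]-[H_-(E^2_\bullet\otimes G_\bullet)]$. One then tensors the single family $\sA_\bullet$ (interpolating $G_\bullet$ at $\lambda=1$ and $j_{0*}C_\bullet$ at $\lambda=0$) with $p^*E^2_\bullet$ --- here is where the hypothesis that $E^2_\bullet$ consists of vector bundles is used, so that the support and exactness statements survive --- and applies \lemref{lem:gysin} together with $0^!=1^!$ on $G_0(\sX\times_k\A^1_k)\cong G_0(\sX)$. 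No deformation of $E^2_\bullet$ is needed. Your instinct to use Construction \ref{construction: Homotopyinvarianceconstruction} and $\A^1$-invariance is the right one, but the object being deformed must already contain $\Upsilon(\mathcal G)$, and the target of the deformation is the cohomology of $G_\bullet$, not a bounded complex of vector bundles.
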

\begin{proof}
Let $\mathcal{G}$ be a coherent sheaf on $\sY$ and let  $G_\bullet := E^1_{\bullet} \otimes \Upsilon(\mathcal{G})$. Now substituting  $A_{\bullet} = G_{\bullet}$ in Construction \ref{construction: Homotopyinvarianceconstruction} and keeping all other  notation the same we 
can construct $\mathcal{A}_{\bullet}$ on $\sY \times_k \A^1_k$ such that, $0^! \mathcal{A}_{\bullet} = j_{0*}C_{\bullet}$ and $1^!(\sA_{\bullet}) = j_{1*} G_{\bullet}$ and further the natural map $C_{\bullet} \to H_{\bullet}$ is an isomorphism where both are cohomologically supported on $\sX_1$. Therefore, we can 
conclude that $E^2_{\bullet} \otimes C_{\bullet} \to E^2_{\bullet} \otimes H_{\bullet}$ is an isomorphism in 
$D(\sY,0)$ where now both are cohomologically supported on $\sX$. Consider the  following  diagram, 
\begin{equation}\label{eq:RingHom1}
\xymatrix
{
\Coh \sX_1 \ar[r]^{i_{1*}} \ar[d]_{\Upsilon} & \Coh_{\sX_1} \sY \ar[r]^{\Upsilon} & D(\sY,0)_{\sX_1} \ar[d]^{\otimes E^2_{\bullet}} \\
D(\sX_1,0) \ar[r]_{\otimes( i_1^*E^2_{\bullet})}             & D(\sX_1,0)_\sX \ar[r]_{i_{1*}}                  & D(\sY,0)_{\sX}.
}
\end{equation}
Firstly note that $\Upsilon$ commutes with $i_{1*}$, then it follows from the projection formula \cite[Proposition 2.2.10]{MF} (see \remref{rem:Properpushforwardandprojectionformula}) that \eqref{eq:RingHom1} commutes. 

Now from the definition we have, 

\begin{align*}
\h^{\sX_1}_{\sX}(i_1^*E^2_{\bullet})(\h^{\sY}_{\sX_1}(E^1_{\bullet})(\mathcal{G})) &= \h^{\sX_1}_{\sX}(i_1^*E^2_{\bullet})(({i_{1*}})^{-1}([H_+(E^1_{\bullet} \otimes \Upsilon(\mathcal{G}))] - [H_-(E^1_{\bullet} \otimes \Upsilon(\mathcal{G})]))\\
                                                                                                             &= [H_+(E^2_{\bullet} \otimes H_{\bullet})] - [H_-(E^2_{\bullet} \otimes H_{\bullet})], 
\end{align*}
where the second equality follows from the commutativity of \eqref{eq:RingHom1}. Thus it is enough to show that, 
\begin{equation}\label{eq:RingHomSufficient}
	\h^{\sY}_{\sX} (E^1_{\bullet} \otimes E^2_{\bullet})(\mathcal{G}) = [H_+(E^2_{\bullet} \otimes C_{\bullet})] - [H_-(E^2_{\bullet} \otimes C_{\bullet})]. 
\end{equation} 
 Recall, we have the following diagram, 
 \begin{equation}
\xymatrix { \sY \ar[r]^-{j_i}  \ar[rd] & \sY \times_k \A^1_k  \ar[d]^p \\
	                                   &   \sY       }
 \end{equation}
 where $p$ is the projection to $\sY$.\\
 
Consider the following, 

\begin{align}\label{eq:lemRingHom1}
& j_{1*} ([H_+(E^2_{\bullet} \otimes E^1_{\bullet} \otimes \Upsilon(\mathcal{G}))] -[H_-(E^2_{\bullet} \otimes E^1_{\bullet} \otimes \Upsilon(\mathcal{G}))]) \\ \nonumber
&=  [H_+(j_{1*}(E^2_{\bullet} \otimes G_{\bullet}))] - [H_-(j_{1*}(E^2_{\bullet} \otimes G_{\bullet})) ] \\ \nonumber
&=   [H_+(p^*E^2_{\bullet} \otimes j_{1*}G_{\bullet})] - [H_-(p^*E^2_{\bullet} \otimes j_{1*}G_{\bullet}) ]\\ \nonumber
&=  [H_+(p^*E^2_{\bullet} \otimes1^! \mathcal{A}_{\bullet})] - [H_-(p^*E^2_{\bullet} \otimes 1^! \mathcal{A}_{\bullet}) ] \\ \nonumber
& = 1^!([H_+(p^*E^2_{\bullet} \otimes \mathcal{A}_{\bullet})] - [H_-(p^*E^2_{\bullet} \otimes \mathcal{A}_{\bullet}) ]). \nonumber
\end{align} 
The second equality follows from the projection formula  \cite[Proposition 2.2.10]{MF}(see \remref{rem:Properpushforwardandprojectionformula}) and the third equality follows from \eqref{constr:gysin1}. The last equality follows from \lemref{lem:gysin}. Arguing similarly, but this time applying \eqref{constr:gysin0} we have the following,

\begin{align}\label{eq:lemRingHom2}
&	j_{0*}([H_+(E^2_{\bullet} \otimes C_{\bullet})] - [H_+(E^2_{\bullet} \otimes C_{\bullet})]) \\ \nonumber
~~&=[H_+(p^*E^2_{\bullet} \otimes j_{0*}C_{\bullet})] - [H_+(p^*E^2_{\bullet} \otimes j_{0*}C_{\bullet})]\\ \nonumber
~~&= [H_+(p^*E^2_{\bullet} \otimes 0^!C_{\bullet})] - [H_+(p^*E^2_{\bullet} \otimes 0^!C_{\bullet})] \\ \nonumber
~~&= 0^!([H_+(p^*E^2_{\bullet} \otimes \sA_{\bullet})] - [H_+(p^*E^2_{\bullet} \otimes \sA_{\bullet} )]). \nonumber
\end{align} 
Combining \eqref{eq:lemRingHom1}, \eqref{eq:lemRingHom2}  and the fact that $0^! = 1^! : G_0(\sX \times_k \A^1_k) \to G_0(\sX)$ we conclude that 
\eqref{eq:RingHomSufficient} holds which completes the proof.
\end{proof}
\begin{lem}\label{lem:functoriality:pullback}
	Let $Z' \xrightarrow{g_Z} Z$ be a regular immersion  of quasi-projective schemes. Let $\sY$ be a $\DM$-stack, with $\sX \xrightarrow{i_{\sX}} \sY$ a closed substack. Let $p_{\sY} : \sY \to Z$ be a morphism. Consider the following diagram where each square is cartesian, 
		\begin{equation}\label{eqn: functorialitypullback}
	\xymatrix
	{
		\sX' \ar[r]^{i_{\sX'}} \ar[d]_{g_{\sX}} & \sY' \ar[r]^{p_{\sY'}} \ar[d]_{g_{\sY}} & Z' \ar[d]^{g_Z} \\
		\sX \ar[r]_{i_{\sX}}           & \sY  \ar[r]_{p_{\sY}}          & Z.
	}
	\end{equation}
	Let $E_{\bullet}$ be a $2$-periodic complex of vector bundles such that $E_{\bullet}$ is locally contractible off $\sX$, then for $\mathcal{G} \in G_0(\sY)$, 
	\begin{equation}
	g_{\sX}^!\h^{\sY}_{\sX}(E_{\bullet})(\mathcal{G}) = \h^{\sY'}_{\sX'}(g_{\sY}^*(E_{\bullet}))(g_{\sY}^!(\mathcal{G})),
	\end{equation}
	where $g_{\sX}^!: G_0(\sX) \to G_0(\sX')$ and $g_{\sY}^!: G_0(\sY) \to G_0(\sY')$ denote the {\it refined Gysin pull back} induced by the regular immersion $Z' \to Z$. 
	
\end{lem}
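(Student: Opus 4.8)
The strategy is to mirror the proof of Lemma~\ref{Ring:hom}: reduce the general $2$-periodic complex to a folding of a bounded complex via Construction~\ref{construction: Homotopyinvarianceconstruction}, and then invoke the classical compatibility of refined Gysin pull-back with taking alternating sums of cohomology for bounded complexes of vector bundles. First I would fix a coherent sheaf $\mathcal{G}$ on $\sY$ and set $A_\bullet := E_\bullet \otimes \Upsilon(\mathcal{G})$, which is cohomologically supported on $\sX$ by Corollary~\ref{A:Cor-Comparision}(iii). Applying Construction~\ref{construction: Homotopyinvarianceconstruction} to $A_\bullet$ produces a complex $\sA_\bullet$ on $\sY \times_k \A^1_k$ together with the folding complex $C_\bullet \cong H_\bullet$ such that $0^!\sA_\bullet \cong j_{0*}C_\bullet$ and $1^!\sA_\bullet \cong j_{1*}A_\bullet$, with both $C_\bullet$ and $A_\bullet$ cohomologically supported on $\sX$.

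The next step is to run the deformation argument over $Z$ rather than over $\Spec k$. I would base-change the whole Construction along $p_\sY: \sY \to Z$, so that $\sA_\bullet$ becomes a complex on $\sY \times_k \A^1_k$ mapping to $Z$, and then pull back along $g_Z: Z' \to Z$. Since the squares in \eqref{eqn: functorialitypullback} are cartesian and $g_Z$ is a regular immersion, the refined Gysin maps $g_{\sY}^!$ and $g_{\sX}^!$ are defined, and they commute with the $0^!$/$1^!$ specialization maps and with proper pushforward along the $j_i$'s by the standard commutativity of refined Gysin operations (this is the $K$-theoretic analogue of \cite[\S~2]{YPLee}, exactly as used in Lemma~\ref{lem:gysin} and Lemma~\ref{Ring:hom}). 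Combining this with Lemma~\ref{lem:gysin} applied both over $Z$ and over $Z'$, and with the identity $0^! = 1^!$ on $G_0(-\times_k \A^1_k)$, reduces the claim to the case where $E_\bullet$ is the folding of a \emph{bounded} complex of vector bundles.

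For that reduced case, $\h^{\sY}_{\sX}(E_\bullet)(\mathcal{G})$ is simply the alternating sum of the cohomology sheaves of the honest bounded complex $E_\bullet \otimes \mathcal{G}$ of coherent sheaves, i.e.\ its class $[E_\bullet \otimes \mathcal{G}] \in K_0(\Coh_\sX \sY) \cong G_0(\sX)$, and likewise on the primed side with $g_\sY^*E_\bullet$. The desired identity $g_\sX^! [E_\bullet \otimes \mathcal{G}] = [g_\sY^*E_\bullet \otimes g_\sY^!\mathcal{G}]$ is then the compatibility of refined Gysin pull-back with derived tensor product by a perfect complex, together with $g_\sY^*(E_\bullet \otimes \mathcal{G}) \cong g_\sY^*E_\bullet \otimes Lg_\sY^*\mathcal{G}$ and the fact that $g_\sY^!\mathcal{G}$ computes $Lg_\sY^*\mathcal{G}$ in $G_0$ for the regular immersion $g_\sY$ (again \cite[\S~2]{YPLee}); one must check that the supports match, which is immediate since $g_\sY^{-1}(\sX) = \sX'$.

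The main obstacle I expect is bookkeeping the compatibility of the refined Gysin pull-back $g_\sY^!$ with the $\A^1$-deformation in Construction~\ref{construction: Homotopyinvarianceconstruction} at the level of $2$-periodic complexes — i.e.\ ensuring that base-changing $\sA_\bullet$ along $g_Z$ is compatible, up to the cohomological-support isomorphisms, with the maps $0^!$, $1^!$ and the pushforwards $j_{i*}$, so that the two Gysin operations (the one along $\A^1$ and the one along $Z' \to Z$) genuinely commute. This is a ``commuting Gysin'' statement that is standard for bounded complexes but needs the same totalization/spectral-sequence bookkeeping as Lemma~\ref{lem:gysin} to transport across the unbounded $2$-periodic setting; once that commutation is in hand the rest is formal.
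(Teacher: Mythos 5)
Your strategy would get there, but it is considerably more roundabout than the paper's argument, and one step of the reduction is stated imprecisely. The paper's proof is essentially two lines: since $g_Z$ is a regular immersion and $Z'$ is quasi-projective, choose a bounded locally free resolution $P_\bullet$ of $g_{Z*}\cO_{Z'}$; then $p_{\sY}^*P_\bullet$ is a bounded complex of vector bundles on $\sY$, exact off $\sY'$, whose folding $\Upsilon(p_{\sY}^*P_\bullet)$ computes the refined Gysin maps $g_{\sY}^!$ and $g_{\sX}^!$ as instances of the map $\h$ (this is where the monoidality of $\Upsilon$ enters). Both sides of the asserted identity are then the two orders of composing $\h^{\sY}_{\sX}(E_\bullet)$ with $\h^{\sY}_{\sY'}(\Upsilon(p_{\sY}^*P_\bullet))$, so the lemma is exactly the multiplicativity statement \lemref{Ring:hom} applied to the pair $(E_\bullet,\Upsilon(p_{\sY}^*P_\bullet))$. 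Your plan instead re-runs the machinery \emph{inside} the proof of \lemref{Ring:hom} --- Construction~\ref{construction: Homotopyinvarianceconstruction}, \lemref{lem:gysin}, and $0^!=1^!$ --- together with a commuting-Gysin argument; this duplicates work the paper has already packaged, and the ``commuting Gysin'' bookkeeping you worry about is precisely what citing \lemref{Ring:hom} lets you avoid.

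The imprecise step is the claim that the deformation ``reduces the claim to the case where $E_\bullet$ is the folding of a bounded complex of vector bundles.'' Construction~\ref{construction: Homotopyinvarianceconstruction} applied to $A_\bullet=E_\bullet\otimes\Upsilon(\mathcal G)$ deforms \emph{that} complex of coherent sheaves to the folding of its cohomology sheaves $H_\pm(A_\bullet)$, which are coherent but not locally free; it does not deform $E_\bullet$ itself to a folding of vector bundles. Applying the construction to $E_\bullet$ alone would destroy local freeness, after which $-\otimes\Upsilon(\mathcal G)$ is no longer exact (this is exactly the caveat in the remark following \lemref{A:lem:comparision}), so the reduction as you state it is not available. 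The repair is the paper's route: keep the Gysin kernel $\Upsilon(p_{\sY}^*P_\bullet)$ as the bounded locally free factor, deform the other factor $E_\bullet\otimes\Upsilon(\mathcal G)$ to its cohomology, and conclude by \lemref{Ring:hom}; your final paragraph (identifying $g_{\sY}^!\mathcal G$ with $Lg_{\sY}^*\mathcal G$ in $G_0$ and matching supports via $g_{\sY}^{-1}(\sX)=\sX'$) is then correct as written.
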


\begin{proof}
As $g_Z$ is a regular closed embedding and $Z'$ is quasi-projective  there exists a  resolution of $g_{Z*} \mathcal{O}_{Z'}$ by a complex of locally free sheaves which we denote by $P_{\bullet}$. Further, note that $\Upsilon$ is a monoidal functor with respect to the tensor product that is in particular the following diagram commutes, 
\begin{equation}
	\xymatrix
	{
D^b(\sY) \ar[r]^{\otimes p^*_{\sY}P_{\bullet}} \ar[d]_{\Upsilon} & D^b(\sY,0)_{\sY'} \ar[d]^{\Upsilon} \\
D(\sY,0)                                                     \ar[r]_{\otimes \Upsilon( p^*_{\sY}P_{\bullet})}                  & D(\sY,0)_{\sY'}.
	}
\end{equation}
The proof now follows from the multiplicative property \lemref{Ring:hom}.
\end{proof}

We restate  \lemref{lem:functoriality:pullback} in the set-up of \defref{defn:2periodicgysin} which would be in the form that we would want to apply it. We recall the notations for the convenience of the reader. 

\begin{lem}
	\label{homInv}
	Let $\sA_\bullet$ be a $2$-periodic complex of vector bundles on $\widetilde{\sY}$ and let $\widetilde{\mathcal{G}} \in G_0(\widetilde{\sY})$.
	Assume that the followings are satisfied:
	\begin{enumerate}
		\item There is a morphism $\widetilde{\sY} \rightarrow \mathbb{A}^1_k$. 
		\item There is a closed substack $\sX \times_k \mathbb{A}^1_k \inj \widetilde{\sY}$, such that the composition $\sX \times_k \mathbb{A}^1_k \inj \widetilde{\sY} \ra \mathbb{A}^1_k$ is a projection
		and $\sA_\bullet$ is  locally contractible off $\sX \times \A^1_k$.
		\item Let $p_t: \widetilde{\sY}|_{t} \to \widetilde{\sY}$ be the natural immersion. Assume that  $\h^{\widetilde{\sY}|_t}_{\sX}   (p_t^*\sA_\bullet)(t^! \widetilde{\mathcal{G}})=0$ for { any} $t \in \mathbb{A}^1_k \backslash \{0\}$.
	\end{enumerate}
	Then $\h^{\widetilde{\sY} |_0}_{\sX}  (p_0^*\sA_\bullet)(0^!\widetilde{\mathcal{G}})=0$. Further, when $\widetilde{\sY}= \sY \times \A^1_k$ then $\h^{\sY}_{\sX}(p_0^*\sA_{\bullet}) = h^{\sY}_{\sX}(p_1^*\sA_\bullet): G_{0}(\sY) \to G_0(\sX)$.
\end{lem}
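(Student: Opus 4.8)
The plan is to deduce both assertions of \lemref{homInv} directly from \lemref{lem:functoriality:pullback} (functoriality of $\h$ under refined Gysin pullback) together with the homotopy invariance of $G$-theory. First I would record the elementary observation that ``locally contractible off a closed substack'' is stable under pullback of $2$-periodic complexes of vector bundles: under hypothesis (2) the complex $\sA_\bullet$ is locally contractible off $\sX\times_k\A^1_k$, and each $p_t^*\sA_\bullet$ is locally contractible off $\sX$, so that the maps $\h^{\widetilde{\sY}|_t}_{\sX}(p_t^*\sA_\bullet)$ and $\h^{\widetilde{\sY}}_{\sX\times_k\A^1_k}(\sA_\bullet)\colon G_0(\widetilde{\sY})\to G_0(\sX\times_k\A^1_k)$ are all defined in the sense of \defref{Defn:$h^Y_X$}.

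Next I would apply \lemref{lem:functoriality:pullback} to the regular immersion of a closed point $g_Z = j_t\colon\Spec k\to\A^1_k$, taking $Z=\A^1_k$, taking the structure morphism $\widetilde{\sY}\to\A^1_k$ of hypothesis (1) in the role of $p_{\sY}$, the closed substack $\sX\times_k\A^1_k\inj\widetilde{\sY}$ in the role of $\sX\inj\sY$, and $\sA_\bullet$ in the role of $E_\bullet$. Because by hypothesis (2) the composite $\sX\times_k\A^1_k\inj\widetilde{\sY}\to\A^1_k$ is the projection, the two cartesian squares of \eqref{eqn: functorialitypullback} specialize to
\[
\sX \to \widetilde{\sY}|_t \to \Spec k, \qquad \sX\times_k\A^1_k \to \widetilde{\sY} \to \A^1_k,
\]
so that $g_{\sY}^! = t^!\colon G_0(\widetilde{\sY})\to G_0(\widetilde{\sY}|_t)$ and $g_{\sX}^!$ is the refined Gysin map $t^!\colon G_0(\sX\times_k\A^1_k)\to G_0(\sX)$ attached to $j_t$. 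Then \lemref{lem:functoriality:pullback} yields, for every closed point $t\in\A^1_k$,
\[
\h^{\widetilde{\sY}|_t}_{\sX}(p_t^*\sA_\bullet)\bigl(t^!\widetilde{\mathcal{G}}\bigr) \;=\; t^!\Bigl(\h^{\widetilde{\sY}}_{\sX\times_k\A^1_k}(\sA_\bullet)(\widetilde{\mathcal{G}})\Bigr),
\]
that is, the left-hand side is the image under $t^!$ of the single fixed class $\alpha := \h^{\widetilde{\sY}}_{\sX\times_k\A^1_k}(\sA_\bullet)(\widetilde{\mathcal{G}})\in G_0(\sX\times_k\A^1_k)$.

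The final ingredient is \emph{homotopy invariance of $G$-theory}: the flat pullback $q^*\colon G_0(\sX)\to G_0(\sX\times_k\A^1_k)$ along the projection $q$ is an isomorphism, and $t^!\circ q^* = \id$ for every closed point $t$ (since $q$ restricted to $\sX\times\{t\}$ is the identity of $\sX$). Hence $t^!\colon G_0(\sX\times_k\A^1_k)\to G_0(\sX)$ is independent of the closed point $t$, so $t^!(\alpha)$ does not depend on $t$. As $\Char k = 0$, there is a $k$-rational point $t\ne 0$ in $\A^1_k$; by hypothesis (3), $t^!(\alpha)=\h^{\widetilde{\sY}|_t}_{\sX}(p_t^*\sA_\bullet)(t^!\widetilde{\mathcal{G}})=0$ for that $t$, whence also $0^!(\alpha)=\h^{\widetilde{\sY}|_0}_{\sX}(p_0^*\sA_\bullet)(0^!\widetilde{\mathcal{G}})=0$, proving the first assertion. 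For the last statement I would run the same argument with $\widetilde{\sY}=\sY\times_k\A^1_k$ and its two projections: given $\mathcal{G}\in G_0(\sY)$ take $\widetilde{\mathcal{G}}=\pr_\sY^*\mathcal{G}$, so that $t^!\widetilde{\mathcal{G}}=\mathcal{G}$ under $\widetilde{\sY}|_t\cong\sY$; the displayed identity together with the $t$-independence of $t^!(\alpha)$ shows that $t\mapsto\h^{\sY}_{\sX}(p_t^*\sA_\bullet)(\mathcal{G})$ is constant, and taking $t=0,1$ and letting $\mathcal{G}$ vary over $G_0(\sY)$ gives $\h^{\sY}_{\sX}(p_0^*\sA_\bullet)=\h^{\sY}_{\sX}(p_1^*\sA_\bullet)$. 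I expect the only genuinely nontrivial point to be invoking homotopy invariance of $G$-theory in the generality of $\DM$-stacks over a characteristic-zero field; the rest is bookkeeping to match hypotheses (1)--(3) with the cartesian diagram of \lemref{lem:functoriality:pullback}.
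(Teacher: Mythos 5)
Your proposal is correct and is exactly the argument the paper intends: the paper's proof consists of the one-line citation of Lemma~\ref{lem:functoriality:pullback} together with $\mathbb{A}^1$-homotopy invariance $G_0(\sX\times_k\mathbb{A}^1_k)\cong G_0(\sX)$, and you have simply spelled out the bookkeeping (identifying $g_{\sX}^!$ and $g_{\sY}^!$ with $t^!$ and using $t^!\circ q^*=\id$ to get $t$-independence). No gaps.
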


\begin{proof}
	It follows from Lemma \ref{lem:functoriality:pullback} and $\mathbb{A}^1$-homotopy invariance $G_0(\sX \times_k \mathbb{A}^1_k) \cong G_0(\sX)$ (\cite[Proposition 3.3(4)]{Toen}).  
\end{proof}

\begin{lem}\label{lem:functoriality:Proper}
	Let $\sY$ and $\sY'$ be $\DM$-stacks  and let $g_Y: \sY' \to \sY$ be a proper representable morphism. Consider the following cartesian diagram,
	\begin{equation}\label{eqn1:lem:functoriality:Proper}
	\xymatrix{\sX' \ar[d]_{g_\sX} \ar[r]^{i'} & \sY' \ar[d]^{g_\sY} \\
		\sX \ar[r]_{i} & \sY, 	
	}         
	\end{equation}
	where $\sX$ and $\sX'$ are closed substacks of $\sY$ and $\sY'$ respectively. Let $E_{\bullet}$ be a $2$-periodic complex of vector bundles such that $E_{\bullet}$ is locally contractible off $\sX$. Then for every $\mathcal{G} \in G_0(\sY')$, 
	
	$$\h^{\sY}_{\sX}(E_{\bullet})(g_{\sY*} \mathcal{G}) = g_{\sX*}(\h^{\sY'}_{\sX'}(g^*_\sY E_{\bullet})(\mathcal{G})) \text{ in }  G_0(\sX).$$
\end{lem}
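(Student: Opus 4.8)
The plan is to follow the strategy of the proof of \lemref{Ring:hom}: reduce to the case of a folding complex by means of the deformation of Construction~\ref{construction: Homotopyinvarianceconstruction} and $\A^1$-homotopy invariance. By linearity we may assume $\mathcal{G}=[\cF]$ for a single coherent sheaf $\cF$ on $\sY'$, so that $g_{\sY*}\mathcal{G}$ is represented by the bounded complex $Rg_{\sY*}\cF$ ($Rg_{\sY*}$ has finite cohomological dimension by \remref{rem:Properpushforwardandprojectionformula}). Since $\h^\sY_\sX(E_\bullet)$ is additive and $E_\bullet$ consists of vector bundles, the stupid filtration together with \lemref{lem: welldefined} shows that $\h^\sY_\sX(E_\bullet)(g_{\sY*}\mathcal{G})=[H_+(E_\bullet\ot\Upsilon(Rg_{\sY*}\cF))]-[H_-(E_\bullet\ot\Upsilon(Rg_{\sY*}\cF))]$. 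Now $\Upsilon$ is monoidal and compatible with proper pushforward (by the construction of $Rg_*$ on factorizations, cf. \remref{rem:Properpushforwardandprojectionformula}), and $g_\sY^*E_\bullet$ is a $2$-periodic complex of vector bundles, so the projection formula of \remref{rem:Properpushforwardandprojectionformula} gives
\begin{equation}\label{eq:proper-projform}
E_\bullet\ot\Upsilon(Rg_{\sY*}\cF)\;\cong\;Rg_{\sY*}\bigl(g_\sY^*E_\bullet\ot\Upsilon(\cF)\bigr)
\end{equation}
in $D^{abs}[\Fact(\sY,0)]$. Writing $A_\bullet:=g_\sY^*E_\bullet\ot\Upsilon(\cF)$, note that $g_\sY^*E_\bullet$ is locally contractible off $\sX'$ (pullback of a locally contractible complex), hence by \corref{A:Cor-Comparision}(iii) the complex $A_\bullet$ is locally contractible off $\sX'$, in particular cohomologically supported on $\sX'$. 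By \eqref{eq:proper-projform} the lemma is equivalent to the identity
\begin{equation}\label{eq:proper-key}
[H_+(Rg_{\sY*}A_\bullet)]-[H_-(Rg_{\sY*}A_\bullet)]\;=\;g_{\sX*}\bigl([H_+(A_\bullet)]-[H_-(A_\bullet)]\bigr)\quad\text{in }G_0(\sX).
\end{equation}

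I would first verify \eqref{eq:proper-key} when $A_\bullet$ is a folding complex $\Upsilon(\mathcal{C}^{\bullet})$ for a bounded complex $\mathcal{C}^{\bullet}$ of coherent sheaves on $\sY'$ with cohomology supported on $\sX'$. For any bounded complex $\mathcal{D}^{\bullet}$ one has $[H_+(\Upsilon(\mathcal{D}^{\bullet}))]-[H_-(\Upsilon(\mathcal{D}^{\bullet}))]=\sum_i(-1)^i[\mathcal{H}^i(\mathcal{D}^{\bullet})]$, and $\Upsilon$ commutes with $Rg_{\sY*}$, so \eqref{eq:proper-key} reduces to the classical compatibility $\sum_i(-1)^i[\mathcal{H}^i(Rg_{\sY*}\mathcal{C}^{\bullet})]=g_{\sX*}\bigl(\sum_i(-1)^i[\mathcal{H}^i(\mathcal{C}^{\bullet})]\bigr)$ of the $G$-theoretic pushforward with d\'evissage. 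For general $A_\bullet$ I would apply Construction~\ref{construction: Homotopyinvarianceconstruction} with $\sX'\inj\sY'$ in place of $\sX\inj\sY$, obtaining a $2$-periodic complex $\mathcal{A}_\bullet$ on $\sY'\times_k\A^1_k$, cohomologically supported on $\sX'\times_k\A^1_k$ (indeed locally contractible off it, as in the proof of \lemref{Ring:hom}), with $1^!\mathcal{A}_\bullet\cong A_\bullet$ and $0^!\mathcal{A}_\bullet\cong C_\bullet$, where $C_\bullet\cong\Upsilon(H_+(A_\bullet))\oplus\Upsilon(H_-(A_\bullet))[1]$ is a folding complex cohomologically supported on $\sX'$.

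Set $G:=g_\sY\times\id_{\A^1_k}$ and $G_{\sX}:=g_\sX\times\id_{\A^1_k}$. Since $t^!$ is the functor $-\ot p^*P_\bullet$ for the pullback $P_\bullet$ of a Koszul complex from $\A^1_k$ (\defref{defn:2periodicgysin}), base change along $\sY'\times\{t\}\inj\sY'\times_k\A^1_k$ and the projection formula of \remref{rem:Properpushforwardandprojectionformula} yield $t^!(RG_*\mathcal{A}_\bullet)\cong Rg_{\sY*}(t^!\mathcal{A}_\bullet)$ for $t\in\{0,1\}$; combined with \lemref{lem:gysin} (and its analogue at $t=1$) and the identity $t^!G_{\sX*}=g_{\sX*}t^!$ this gives, for $t\in\{0,1\}$,
\begin{align}\label{eq:proper-slices}
t^!\bigl([H_+(RG_*\mathcal{A}_\bullet)]-[H_-(RG_*\mathcal{A}_\bullet)]\bigr)&=[H_+(Rg_{\sY*}(t^!\mathcal{A}_\bullet))]-[H_-(Rg_{\sY*}(t^!\mathcal{A}_\bullet))],\\
t^!\Bigl(G_{\sX*}\bigl([H_+(\mathcal{A}_\bullet)]-[H_-(\mathcal{A}_\bullet)]\bigr)\Bigr)&=g_{\sX*}\bigl([H_+(t^!\mathcal{A}_\bullet)]-[H_-(t^!\mathcal{A}_\bullet)]\bigr).\nonumber
\end{align}
By the folding case applied to $C_\bullet\cong0^!\mathcal{A}_\bullet$, the two right-hand sides of \eqref{eq:proper-slices} agree at $t=0$; by $\A^1$-invariance $0^!=1^!\colon G_0(\sX\times_k\A^1_k)\to G_0(\sX)$ (\cite[Proposition 3.3(4)]{Toen}, as in the proof of \lemref{homInv}) each left-hand side of \eqref{eq:proper-slices} is independent of $t\in\{0,1\}$. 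Hence the right-hand sides agree at $t=1$, which is exactly \eqref{eq:proper-key}, completing the proof. I expect the main obstacle to be the bookkeeping in this last step: making precise the compatibilities $t^!RG_*\cong Rg_{\sY*}t^!$ and $RG_*j_{t*}\cong j_{t*}Rg_{\sY*}$ (base change together with the projection formula for pullbacks of Koszul factorizations), and verifying --- exactly as is already needed in \lemref{Ring:hom} --- that the complex $\mathcal{A}_\bullet$ produced by Construction~\ref{construction: Homotopyinvarianceconstruction} is locally contractible off $\sX'\times_k\A^1_k$, so that \lemref{lem:gysin} applies verbatim. The remaining folding-complex facts ($\Upsilon$ commuting with $Rg_{\sY*}$, the cohomology formula for $\Upsilon(\mathcal{D}^{\bullet})$, and compatibility of pushforward with d\'evissage) are routine.
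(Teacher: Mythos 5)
Your proposal is correct and follows essentially the same route as the paper's proof: reduce via the projection formula to the identity $[H_\pm(Rg_{\sY*}A_\bullet)]$ versus $g_{\sX*}[H_\pm(A_\bullet)]$, verify it for folding complexes by the classical compatibility of $G$-theoretic pushforward, and deform a general $A_\bullet$ to a folding complex using Construction~\ref{construction: Homotopyinvarianceconstruction} together with $0^!=1^!$ on $G_0(\sX\times_k\A^1_k)$. The only cosmetic difference is that you obtain $t^!RG_*\cong RG_*t^!$ from base change plus the projection formula for the pulled-back Koszul factorization, while the paper derives it by observing that $RG_*$ preserves the triangles defined by multiplication by $t$ and $t-1$; these are the same computation.
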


\begin{proof}
As $g_\sY$ is representable proper morphism it follows that $Rg_{\sY*}$ has finite cohomological dimension and hence it follows from  \remref{rem:Properpushforwardandprojectionformula} that $Rg_{\sY*} : D^{abs}[\Fact(\sY,0)] \to D^{abs}[\Fact(\sY,0)]$ is well defined. 
Let $\sU = \sY \setminus \sX$ and let us consider the following diagram, 
	\begin{equation}\label{eqn1:lem:functoriality:Proper2}
\xymatrix{\sX' \ar[d]_{g_\sX} \ar[r]^{i'} & \sY' \ar[d]^{g_\sY}  & \ar[l]_{j'} \sU' := \sY' \setminus \sX' \ar[d]^{g_{\sU}}\\
	\sX \ar[r]_{i} & \sY 	 & \ar[l]^{j} \sU := \sY \setminus \sX
}         
\end{equation}
where each square is cartesian. As $E_\bullet $ is locally contractible off $\sX$ it follows that $g_\sY^* E_\bullet$ is locally contractible off $\sX'$. Let $\mathcal{G}$ be a coherent sheaf on $\sY'$ and let us consider the $2$-periodic complex $G_\bullet:= g_\sY^*E_\bullet \otimes \Upsilon(\mathcal{G})$. 
From \corref{A:Cor-Comparision} it follows that $G_\bullet$ is cohomologically supported on $\sX'$. Further, from the projection formula we have,
\begin{equation}\label{eqn:projformulaedit}
Rg_{\sY*} G_\bullet \cong E_\bullet \otimes Rg_{\sY*} \Upsilon(\mathcal{G}) \cong E_\bullet \otimes \Upsilon(Rg_{\sY*} \mathcal{G}).
\end{equation}
 Now note that, $Rg_{\sY*}\mathcal{G}$  is an object of the bounded derived category of quasi-coherent sheaves with coherent cohomology, that is it is an object of $D^b_{coh}(\QCoh \sY)$. Therefore, $\Upsilon(Rg_{\sY*}\mathcal{G})$ is an object of $D(\sY,0)$. In particular it follows from \eqref{eqn:projformulaedit} that $Rg_{\sY*} G_\bullet$ belongs to $D(\sY,0)$.

The strategy of the proof now is to reduce the argument to folding complexes via Construction \ref{construction: Homotopyinvarianceconstruction}. First note that from \eqref{eqn1:lem:functoriality:Proper}   the following diagram commutes, 
\begin{equation}
\xymatrix{ 
	G_0(\sX') \ar[r]^-{i'_*} \ar[d]_{g_{\sX*}} & K_0(\Coh_{\sX'} \sY') \ar[d]^{g_{\sY*}}  \\
	G_0(\sX)  \ar[r]_-{i_*}                           & K_0(\Coh_{\sX} \sY) 
}
\end{equation} 
where the top and bottom horizontal maps are isomorphisms by d\'{e}vissage. Thus, to prove the lemma it is enough to show that, 
$$\h^{\sY}_{\sX}(E_{\bullet})(g_{\sX*} \mathcal{G}) = g_{\sY*}(\h^{\sY'}_{\sX'}(g^*_\sY E_{\bullet})(\mathcal{G})) \text{ in }  K_0(\Coh_{\sX} \sY)$$ 
where we ignore the identifications via d\'{e}vissage induced by $i_*$ and $i'_*$. 

Now let us recall two basic facts, firstly for an abelian category $\sC$, there is a natural isomorphism $K_0(\sC) \xrightarrow{\cong} K_0(D^b(\sC))$, which is induced by sending any object of $C \in \sC$ to the complex with $C$ in degree zero and all other terms zero. The inverse of this isomorphism is given by sending any complex $[C_{\bullet}] \mapsto \Sigma_{i} (-1)^i H^i(C_{\bullet})$. Secondly, for  $\sY$ a $\DM$-stack and $\sX \inj \sY$ a closed substack the inclusion induces an equivalence of categories $D^b(\Coh_{\sX} \sY) \to D^b(\Coh \sY)_{\sX}$, where $D^b(\Coh \sY)_{\sX}$ is the full subcategory of $D^b(\Coh \sY)$ of complexes cohomologically supported on $\sX$ (see \cite[Lemma 2.1]{Orlov} in the case of schemes). Hence  
we observe that we have the following natural isomorphisms $$G_0(\sX) \cong K_0(\Coh_{\sX} \sY) \cong K_0(D^b(\Coh_{\sX} \sY)) \cong K_0(D^b(\Coh \sY)_{\sX}).$$
Further, note that for any $C_{\bullet} \in D^b(\Coh \sY)_{\sX}$, we have $\Sigma_{i} (-1)^i [H^i(C_{\bullet})] = [H_+(\Upsilon(C_{\bullet})] - [H_-(\Upsilon(C_{\bullet})]$ in $K_0(\Coh_{\sX}\sY)$. Now essentially from the definition of pushforward at the level of $G$-theory, it follows that for a complex $W_{\bullet} = \Upsilon(C'_{\bullet})$, where $C'_{\bullet} \in D^b(\Coh \sY')_{\sX'}$, we have the following, 
\begin{equation}\label{eq:properlemmafolding}
g_{\sY*}([H_+(W_{\bullet})] - [H_-(W_{\bullet})]) = [H_+(Rg_{\sY*}W_{\bullet})] - [H_-(Rg_{\sY*}W_{\bullet})] \text{  in   } K_0(\Coh_{\sX}\sY). 
\end{equation} 

Now with notations as in  Construction \ref{construction: Homotopyinvarianceconstruction} on $\sY'$ with $A_{\bullet} := g_{\sY}^*E^2_{\bullet} \otimes \Upsilon(\mathcal{G})$ and keeping  rest of the notations as in the construction we can construct $\sA_{\bullet}$ on $\sY' \times \A^1_k$ such that $0^! \sA_{\bullet} \cong j_{0*}C_{\bullet}$
a folding and $1^!\sA_{\bullet} \cong j_{1*}(A_{\bullet}) = j_{1*}(g_{\sY}^*E^2_{\bullet} \otimes \Upsilon(\mathcal{G}))$ the original complex.

Let us denote by $\tilde{g}:=  g_{\sY} \times 1: \sY' \times \A^1_k \to \sY \times \A^1_k$. As $g_{\sY}$  is a proper representable morphism so is $\tilde{g}$. 
We have the following diagram which commutes, 
\begin{equation}\label{eq:lemproper3}
	\xymatrix{
\sY' \ar[r]^-{j'_i} \ar[d]_{g_{\sY}} & \sY' \times_k \A^1_k \ar[d]^{\tilde{g}} \\
\sY  \ar[r]_-{j_i}               & \sY \times_k \A^1_k  ,                  	
}
\end{equation}
where $j_i$ and $j'_i$ correspond to the closed immersion along $\sY \times \{i\}$ and $\sY' \times \{i\}$ respectively.
 As $R\tilde{g}_* : D(\sY' \times_k \A^1_k) \to D(\sY \times_k \A^1_k)$ is a triangulated functor it preserves triangles, thus applying this to \eqref{constr:gysin0} and \eqref{constr:gysin1}  we have the following triangles,
\begin{equation}\label{eqn:Proper0gysin}
	R\tilde{g}_* \sA_{\bullet} \xrightarrow{t} R\tilde{g}_* \sA_{\bullet} \to R\tilde{g}_* 0^!\sA_{\bullet} \xrightarrow{+1},
\end{equation}

\begin{equation}\label{eqn:Proper1gysin}
R\tilde{g}_* \sA_{\bullet} \xrightarrow{(t-1)} R\tilde{g}_* \sA_{\bullet} \to R\tilde{g}_* 1^!\sA_{\bullet} \xrightarrow{+1},
\end{equation}
 in $D(\sY \times_k \A^1_k)$. Note that the cone of the first morphisms in \eqref{eqn:Proper0gysin} and \eqref{eqn:Proper1gysin} coincide with $0^!R\tilde{g}_* \sA_{\bullet} $ and $1^!R\tilde{g}_* \sA_{\bullet} $ respectively. Hence we conclude that, 
 
 \begin{equation}\label{eq:propergysin0}
 0^!(R\tilde{g}_* \sA_{\bullet}) \cong R\tilde{g}_* (0^!\sA_{\bullet}),
 \end{equation}
 \begin{equation}\label{eq:proper1gysin1}
1^!(R\tilde{g}_* \sA_{\bullet}) \cong R\tilde{g}_*( 1^!\sA_{\bullet}).
 \end{equation}
Now note that $0^!([H_+ (R\tilde{g}_*\sA_{\bullet})]- [H_- (R\tilde{g}_* \sA_{\bullet})] ) = 1^!(([H_+ (R\tilde{g}_* \sA_{\bullet})]- [H_- (R\tilde{g}_* \sA_{\bullet})] ))$ in $G_0(\Coh_{\sX}\sY)$. 
 
The proof of the lemma now follows from combining the above observations with \lemref{lem:gysin}. Consider the following, 
 \begin{align*}
 g_{\sY*}([H_+(A_{\bullet})] - [H_-(A_{\bullet})]) &=	g_{\sY*}([H_+(C_\bullet)] - [H_-(C_{\bullet})]) \\
                                                                         &=  [H_+(Rg_{\sY*} C_{\bullet})] - [H_-(Rg_{\sY*} C_{\bullet})] \\
                                                                        &=   0^!([H_+ (R\tilde{g}_*\sA_{\bullet})]- [H_- (R\tilde{g}_* \sA_{\bullet})] )\\
                                                                        & =1^!(([H_+ (R\tilde{g}_* \sA_{\bullet})]- [H_- (R\tilde{g}_* \sA_{\bullet})] ))        \\
                                                                       & = [H_+(Rg_{\sY*}A_{\bullet})] -  [H_+(Rg_{\sY*}A_{\bullet})],                                                               
 \end{align*}
 where the first equality follows from \eqref{eq:constrfoldingandcohomology} and the second equality follows from  \eqref{eq:properlemmafolding} as $C_{\bullet}$ is a folding complex. The third equality follows from the commutativity of  \eqref{eqn:Proper0gysin},  \eqref{eq:propergysin0} and \lemref{lem:gysin}. The second last equality follows from  \lemref{lem:gysin} and the fact that $G_0(\sX \times \A^1_k) \cong G_0(\sX)$(see \lemref{homInv}). The last equality again follows from the commutativity of \eqref{eqn:Proper1gysin} and \eqref{eq:proper1gysin1} and  \lemref{lem:gysin}.

\end{proof}

\begin{remk}
The reader might note that  in \defref{Defn:$h^Y_X$} instead of working with a $2$-periodic complex of vector bundles $E_\bullet$ on $\sY$ which is locally contractible off $\sX$, one could have instead  only imposed the condition that $E_\bullet$ is absolutely acyclic off $\sX$. This remark also applies to the rest of the functoriality statements  of \secref{Sec:FunctorialityProperties}, as the proofs go through as is. 
\end{remk}

	\section{Koszul Complexes} \label{Kos:Comp}

		In \cite[Theorem 4.1]{KiemLi2}, Kiem and Li defined a $K$-theoretic version of the  cosection-localized Gysin map.
		In this section we show that when $E_{\bullet}$ is the Koszul $2$-periodic complex the map $\h^{\sY}_{\sX}(E_{\bullet})$ is equivalent to the cosection localized Gysin map; Theorem \ref{Thm1}.
		We apply the equivalence to prove a comparison of virtual structure sheaves;  \thmref{Comp:Thm}. The reader is advised that what follows is precisely  what is done in \cite[\S~2]{KO1} for the localized Chern character while we are interested in similar properties for the map $\h^{\sY}_{\sX}$ (\defref{Defn:$h^Y_X$}). Most of the statements and their proof rely on the constructions of {\textit{loc.cit}}. In  an effort to  make the paper reasonably self contained we recall the important definitions and constructions and refer the reader \cite{KO1} for further details.

	We recall  the  notion of a Koszul $2$-periodic complex. 
	\begin{defn}[Koszul $2$-periodic complex](\cite[\S~2.2]{KO1})\label{defn:Koszul2periodicComplex}
		
Let $E$ be a vector bundle on a $\DM$-stack $\sY$ and let $\ka \in H^0 (\sY, E^\vee)$, $\kb \in H^0(\sY,  E )$ be sections
such that $\lan \ka , \beta \ran =0$, where the bracket $\lan \ , \ \ran$ denotes an evaluation, or equivalently a pairing.
Let $\{\ka , \kb\}$ denote the following $2$-periodic complex of vector bundles,
\begin{equation}\label{eq:koszul}
\xymatrix{   \oplus _k \bigwedge ^{2k-1} E^{\vee }  \ar@<1ex>[r]^{\wedge \alpha + \iota_\beta  }  
	&  \ar@<1ex>[l]^{\wedge \alpha +  \iota_\beta }  \oplus_k \bigwedge ^{2k} E^{\vee }  }
\end{equation} 
where $\iota _\beta$ denotes the interior product by $\beta$. The convention is that the direct sum of even wedges sits on even degree.
$\{\ka,  \kb  \}$ is called the  {\em Koszul $(2$-periodic) complex associated to the cosection $\ka$ and the section $\kb$}.
	\end{defn}
	 
	 \begin{remk}
	  It follows from \cite[Proposition 2.3.3]{MF}  that the Koszul complex $\{  \ka, \kb  \}$ is locally contractible off   $\sX:=Z (\ka, \kb ):= Z(\ka) \cap Z(\kb )$. Further, from \cite[equation 2.1]{KO1} we have,
	  \begin{equation}\label{duality}
\{\ka,\kb\}= \{\kb^\vee, \ka^\vee\} \otimes \Upsilon(\det E^\vee [rank E] ).
	  \end{equation}
	 \end{remk}

	 We next prove a $K$-theoretic analogue of the Splitting principle proved in \cite[\S~ 2.4]{KO1}. We recall the set-up and notations as in {\it{loc.cit}}.
	 Let $E$, $\alpha$, $\kb$ and $\{\ka,\kb\}$ be as in \defref{defn:Koszul2periodicComplex}. Let $Q$ be a vector bundle on $\sY$ which is a quotient of $E$ and let us denote by $K$ the kernel of the quotient map. We thus have the following exact sequence, 
	 \begin{equation}
	 0 \ra K \xrightarrow{f} E \rightarrow Q \ra 0.
	 \end{equation}
	 Let us assume that the cosection $\alpha$ factors as a cosection of $Q$ which we denote by $\alpha_Q$ and let $\beta_K$ be a section of $K$.
	  Recall from  \cite[\S~2.4.1]{KO1}, we can construct a vector bundle $P$ on $\sY \times  \Spec ~k[t]$ with a cosection $\ka_P$ and a section $\kb_P$ induced by $\ka$ and $\kb$ such that 
	  $P|_{t=0} \cong Q \oplus K$ and $P|_{t =1} \cong E$. With setup as above we  prove the $K$-theoretic splitting principle analogous to \cite[Lemma 2.4]{KO1}.

	\begin{lem}\label{lem:splittingprinciple}
	  	With notations as above let us further assume that $Z(\ka _P, \kb _P ) \subset \sX' \times \mathbb{A}^1_k$ for some closed stack $\sX'$ of $\sY$. 
		Then for  any $\mathcal{G} \in G_0(\sY)$,
	  	\begin{equation}
	  	\h^\sY_{\sX'}   (\{ \ka, \kb \})(\mathcal{G})  =  
		\h^\sY_{\sX'} (\{\ka_Q, 0 \}\ot\{0,\beta_K \}) (\mathcal{G}) .
	  	\end{equation}
	  \end{lem}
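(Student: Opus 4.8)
The plan is to use the $\A^1$-homotopy invariance machinery built in Lemma \ref{homInv} applied to the bundle $P$ over $\sY \times \A^1_k$. First I would set $\widetilde{\sY} = \sY \times_k \A^1_k$ and take $\sA_\bullet := \{\ka_P, \kb_P\}$, the Koszul $2$-periodic complex of vector bundles attached to the cosection $\ka_P$ and the section $\kb_P$ of $P$. By the remark following \defref{defn:Koszul2periodicComplex} (i.e.\ \cite[Proposition 2.3.3]{MF}), $\{\ka_P, \kb_P\}$ is locally contractible off $Z(\ka_P, \kb_P)$, and by our hypothesis $Z(\ka_P, \kb_P) \subset \sX' \times_k \A^1_k$, so $\sA_\bullet$ is locally contractible off $\sX' \times_k \A^1_k$; this verifies hypotheses (1) and (2) of \lemref{homInv}. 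For the input $\widetilde{\mathcal{G}}$ I would take $p^*\mathcal{G}$, the pullback of $\mathcal{G}$ along the projection $p\colon \sY \times_k \A^1_k \to \sY$, so that $t^! \widetilde{\mathcal{G}} = \mathcal{G}$ in $G_0(\sY)$ for every closed point $t$ under the identification $\widetilde{\sY}|_t \cong \sY$.

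Next I would run the deformation over the two relevant fibers. At $t = 1$ we have $P|_{t=1} \cong E$ compatibly with the cosections and sections, so $p_1^*\sA_\bullet \cong \{\ka, \kb\}$ and $\h^{\widetilde{\sY}|_1}_{\sX'}(p_1^*\sA_\bullet)(1^!\widetilde{\mathcal{G}}) = \h^\sY_{\sX'}(\{\ka,\kb\})(\mathcal{G})$. At $t = 0$ we have $P|_{t=0} \cong Q \oplus K$, and under this splitting the induced cosection becomes $\ka_Q \oplus 0$ and the induced section becomes $0 \oplus \kb_K$; since the Koszul complex of a direct sum of bundles with a direct sum of (co)sections is the tensor product of the individual Koszul complexes, $p_0^*\sA_\bullet \cong \{\ka_Q, 0\} \otimes \{0, \kb_K\}$, giving $\h^{\widetilde{\sY}|_0}_{\sX'}(p_0^*\sA_\bullet)(0^!\widetilde{\mathcal{G}}) = \h^\sY_{\sX'}(\{\ka_Q,0\} \otimes \{0,\kb_K\})(\mathcal{G})$. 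Then the final clause of \lemref{homInv} — namely $\h^\sY_{\sX'}(p_0^*\sA_\bullet) = \h^\sY_{\sX'}(p_1^*\sA_\bullet)$ as maps $G_0(\sY) \to G_0(\sX')$ — yields exactly the claimed identity. (Equivalently: hypothesis (3) of the first part of \lemref{homInv} is automatic here because we are comparing two specific fibers rather than proving vanishing, so I would invoke the "Further" clause directly.)

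The main obstacle I anticipate is the bookkeeping in the construction of $P$, $\ka_P$, $\kb_P$ over $\sY \times \A^1_k$ and the verification that its fibers at $t=0$ and $t=1$ carry the cosections and sections asserted — this is the content imported from \cite[\S~2.4.1]{KO1}, and I would simply cite it rather than reprove it. A secondary technical point is checking that the identification $P|_{t=0} \cong Q \oplus K$ intertwines $\ka_P|_0$ with $\ka_Q \oplus 0$ and $\kb_P|_0$ with $0 \oplus \kb_K$ on the nose, so that the Koszul complex genuinely factors as the stated tensor product; this follows from the bilinearity of the wedge/contraction differentials and the multiplicativity of $\Upsilon$ with respect to $\otimes$ (the remark after the tensor-product definition), but it is worth stating explicitly. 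Everything else is a direct application of \lemref{homInv} combined with the compatibility of $\h^\sY_\sX$ with the relevant pullbacks already established in \lemref{lem:functoriality:pullback}.
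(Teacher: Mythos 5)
Your proposal is correct and is exactly the paper's argument: the paper's proof consists of the single line ``It follows from Lemma~\ref{homInv},'' and your write-up supplies precisely the intended details — taking $\sA_\bullet = \{\ka_P,\kb_P\}$ on $\sY\times_k\A^1_k$ with $\widetilde{\mathcal G}=p^*\mathcal G$, identifying the fibers at $t=1$ and $t=0$ with $\{\ka,\kb\}$ and $\{\ka_Q,0\}\otimes\{0,\kb_K\}$ via the construction of $P$ from \cite[\S~2.4.1]{KO1}, and invoking the ``Further'' clause of Lemma~\ref{homInv}. No discrepancies to report.
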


  \begin{proof}
  	It follows from \lemref{homInv}.
  \end{proof}

	\subsection{Tautological Koszul complex}\label{taut}
	The goal of this section is to prove \thmref{Thm1},  which is a comparison result between the cosection localized Gysin map  constructed by Kiem and Li (see \cite[Theorem 4.1]{KiemLi2}) and map defined in \defref{Defn:$h^Y_X$}. For the case of the localized Chern character this is proved in \cite[Theorem 2.6]{KO1}, as the strategy of the proof is completely identical we first recall the set-up as in \cite[\S~2.3]{KO1}. 
	
	Let $M$ be a $\DM$-stack and let $F$ be a vector bundle on $M$. Let $\sigma \in H^0(M, F^\vee)$ be a cosection of $F$. Let us denote the total space of $F$ by $|F|$ and $p: |F| \to M$ the projection map. $\sigma$ then  defines a regular function $w_{\sigma}: |F| \to \A^1_k$. Let us denote by $t_F$ the tautological section in $H^0(|F|, p^*F)$ and further  note that $\lan p^*\sigma, t_F \ran = w_{\sigma}$.
	Now we can define a matrix factorization $\{p^*\sigma, t_F\}$ for the regular section $w_{\sigma}$ in a way similar to \eqref{eq:koszul}. 
	Let $Z(w_{\sigma})$ denote the zero locus of $w_{\sigma}$, then  $\{p^*\sigma, t_F\}$  becomes a $2$-periodic complex when restricted to $Z(w_{\sigma})$. 
	 Here, $Z(w_\sigma)$, $p^*F|_{Z(w_\sigma)}$, $p^*\sigma$ and $t_F$ play roles of $\sY$ ,$E$, $\ka$, and $\kb$, respectively, in \defref{defn:Koszul2periodicComplex}  where we change notation to maintain consistency with {\it loc.cit}.
	In the particular case when $\sigma = 0$, then we note that  $\h^{|F|}_{M} (\{0,t_F\}$ coincides with the refined Gysin pull-back by the zero section of $F$ as $\{0,t_F\}$ is the Koszul resolution of $0_*\sO_{M}$.   
	
	Following \cite[Theorem 4.1]{KiemLi}, 
	we consider the blow-up $M'$ of $M$ along $Z(\sigma)$. We consider the notations and set-up as in \cite[\S~2.4.2]{KO1}.
	Let $F'$ and $\sigma'$ denote the pullback of $F$ and $\sigma$ to $M'$, respectively, and let $D$ denote the exceptional divisor.
		The cosection $\sigma' : F' \ra \cO_{M'}$ factors as follows:
		\[ \xymatrix{        F'   \ar[r] \ar[rd]_{\sigma '}  & \cO_{M'} (-D)    \ar[d]_{s_D} \ar[r] & 0 \\
			& \cO_{M'} .  &     } \]  
		Let $K$ be the kernel of a surjective morphism $F' \ra \cO_{M'}(-D)$. Let $b: Z(\sigma ' ) = D \ra Z(\sigma )$ and $p' : |F'| \ra M'$ denote  the natural projection maps. We denote by $g$ the natural map $g: | K |  \hookrightarrow Z(w_{\sigma'}) \ra Z(w_\sigma )$, which is projective, hence in particular note that it is representable. 
		
		In the next lemma we summarize an intermediate construction as in the proof of  \cite[Theorem 4.1]{KiemLi2}.  Using the localization sequence for $G$-theory it follows that we have the following right exact sequence: 
			\begin{equation}\label{eqn:localization}
			G_0(|K|_{Z(\sigma')} |) \to G_0(|K|) \xrightarrow{j^*} G_0( |K| \backslash |F'|_{Z (\sigma')} |) \to 0.
			\end{equation}     
			Further, by construction it follows that $Z(w_\sigma) \backslash |F|_{Z (\ka)} | \cong |K| \backslash |F'|_{Z (\sigma')} |$. For $\mathcal{G} \in G_0(Z(w_{\sigma}))$, let $\mathcal{G}_1$ denote its pull-back to $G_0(Z(w_\sigma) \backslash |F|_{Z (\sigma)} |)$. Under the isomorphism of 
			$Z(w_\sigma) \backslash |F|_{Z (\sigma)} |$ with  $|K| \backslash |F'|_{Z (\sigma')} |$, let $\mathcal{G}_2$ denote its image in $G_0( |K| \backslash |F'|_{Z (\sigma')} |)$. Let $\tilde{\mathcal{G}} \in G_0(|K|)$ be such that $j^*(\tilde{\mathcal{G}}) = \mathcal{G}_2$, which always exists by the localization sequence \eqref{eqn:localization}. 
			
			Consider the localization sequence 
		\begin{align*}
		G_0(|F|_{Z(\sigma)}|) \xrightarrow{\iota_*} G_0(Z(w_\sigma)) \xrightarrow{i^*} G_0(|K| \backslash |F'|_{Z(\sigma')}|) \ra 0.
		\end{align*}
		By  construction, $i^*(\mathcal{G} - g_*\tilde{\mathcal{G}})=0$.
		Hence there is an (not necessarily unique) element $\mathcal{S} \in G_0(|F|_{Z(\sigma)}|)$ such that $\iota_*(\mathcal{S})=\mathcal{G} - g_*\tilde{\mathcal{G}}$.
			
			\begin{lem}\label{lem:const1}
				With notations as above we have the following, 
				\begin{equation}
				\h^{ Z( w_\sigma ) }_{Z(\sigma)}   ( \{ p^* \sigma,  t_F \} ) ( \mathcal{G} ) = \h^{ |F |_{Z(\sigma)}  | }_{Z(\sigma)}  ( \{ 0,  t_{F | _ {Z( \sigma )} } \} )  (\mathcal{S})  +   b_*  ( -[\cO_D( D )] \otimes   \h^{  | K | } _{M'}   (  \{ 0 ,  t_K \}  )   (    \tilde{\mathcal{G}}   ) ).
				\end{equation}
				In particular, the right-hand side is independent of the choice of $\tilde{\mathcal{G}}$ and $\mathcal{S}$.
			\end{lem}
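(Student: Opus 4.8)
The plan is to adapt the argument of \cite[Theorem 2.6]{KO1} to the map $\h^{\sY}_{\sX}$, using the functoriality results of \secref{Sec:FunctorialityProperties}. By the very construction of $\mathcal{S}$ we have the identity $\mathcal{G} = \iota_*\mathcal{S} + g_*\tilde{\mathcal{G}}$ in $G_0(Z(w_\sigma))$; since $\h^{Z(w_\sigma)}_{Z(\sigma)}(\{p^*\sigma, t_F\})$ is a homomorphism of groups, it suffices to evaluate it separately on $\iota_*\mathcal{S}$ and on $g_*\tilde{\mathcal{G}}$ and add the results.

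First I would treat the term $\iota_*\mathcal{S}$. The map $\iota\colon |F|_{Z(\sigma)}| \hookrightarrow Z(w_\sigma)$ is a closed immersion, hence representable and proper; its base change along $Z(\sigma)\hookrightarrow Z(w_\sigma)$ is the identity of $Z(\sigma)$ (the zero section of $|F|$ over $Z(\sigma)$); and $\iota^*\{p^*\sigma, t_F\} = \{0, t_{F|_{Z(\sigma)}}\}$, since $\sigma$ vanishes on $Z(\sigma)$. The proper pushforward functoriality \lemref{lem:functoriality:Proper} then gives $\h^{Z(w_\sigma)}_{Z(\sigma)}(\{p^*\sigma, t_F\})(\iota_*\mathcal{S}) = \h^{|F|_{Z(\sigma)}|}_{Z(\sigma)}(\{0, t_{F|_{Z(\sigma)}}\})(\mathcal{S})$, which is the first summand on the right-hand side.

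Next I would treat $g_*\tilde{\mathcal{G}}$. As $g\colon |K| \to Z(w_\sigma)$ is projective it is representable and proper, and its base change along $Z(\sigma)\hookrightarrow Z(w_\sigma)$ is identified, just as in \cite[Theorem 2.6]{KO1}, with $b\colon D \to Z(\sigma)$ (the fibre of $g$ over the zero section is carried by the exceptional divisor $D$ inside the zero section of $|K|$ over $M'$). Hence \lemref{lem:functoriality:Proper} gives $\h^{Z(w_\sigma)}_{Z(\sigma)}(\{p^*\sigma, t_F\})(g_*\tilde{\mathcal{G}}) = b_*\big(\h^{|K|}_D(g^*\{p^*\sigma, t_F\})(\tilde{\mathcal{G}})\big)$, so it remains to identify $g^*\{p^*\sigma, t_F\}$, which is the Koszul complex $\{(p')^*\sigma', t_{F'}\}$ restricted to $|K|$. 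On $|K|$ the cosection $(p')^*\sigma'$ factors through the line-bundle quotient $(p')^*\cO_{M'}(-D)$ of $(p')^*F'$ via the section $s_D$, while the tautological section $t_{F'}$ takes values in the subbundle $(p')^*K$, where it coincides with the tautological section $t_K$. Applying the splitting principle \lemref{lem:splittingprinciple} on $|K|$ — whose hypothesis is verified directly, the zero locus of the deformed pair being contained in $D\times_k\A^1_k$ — I obtain $\h^{|K|}_D(g^*\{p^*\sigma, t_F\})(\tilde{\mathcal{G}}) = \h^{|K|}_D(\{s_D, 0\}\otimes\{0, t_K\})(\tilde{\mathcal{G}})$.

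Finally I would compute this last expression by the multiplicativity \lemref{Ring:hom}, applying it with the factor $\{0, t_K\}$ (which is locally contractible off the zero section $M'$) first, so as to reduce to $\h^{M'}_D(\{s_D, 0\})$ composed with $\h^{|K|}_{M'}(\{0, t_K\})$, the latter being the refined Gysin pull-back by the zero section of $K$ over $M'$. The complex $\{s_D, 0\}$ is the folding of the two-term complex $[\cO_{M'}\xrightarrow{s_D}\cO_{M'}(D)]$, which has trivial even cohomology and odd cohomology $\cO_D(D)$; hence $\h^{M'}_D(\{s_D, 0\})$ coincides with $-[\cO_D(D)]\otimes(-)$, using that $[\cO_D(D)] = [\cO_{M'}(D)] - [\cO_{M'}]$ has a length-one locally free resolution. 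This yields $\h^{|K|}_D(\{s_D, 0\}\otimes\{0, t_K\})(\tilde{\mathcal{G}}) = -[\cO_D(D)]\otimes\h^{|K|}_{M'}(\{0, t_K\})(\tilde{\mathcal{G}})$, and combining the two contributions gives the asserted formula; the independence of the choices of $\tilde{\mathcal{G}}$ and $\mathcal{S}$ is then automatic because the left-hand side involves neither. I expect the main difficulty to be not any single computation but the careful bookkeeping of the Cartesian squares and cohomological supports — especially identifying the base change of $g$ along $Z(\sigma)$ with $b\colon D\to Z(\sigma)$ and checking the hypotheses of the splitting principle on $|K|$.
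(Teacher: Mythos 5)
Your proposal is correct and follows essentially the same route as the paper: decompose $\mathcal{G}=\iota_*\mathcal{S}+g_*\tilde{\mathcal{G}}$, handle each piece by the proper pushforward functoriality (\lemref{lem:functoriality:Proper}), deform $\{(p')^*\sigma',t_{F'}\}|_{|K|}$ to $\{s_D,0\}\otimes\{0,t_K\}$ by the splitting principle (\lemref{lem:splittingprinciple}), factor via \lemref{Ring:hom}, and identify $\h^{M'}_D(\{s_D,0\})$ with $-[\cO_D(D)]\otimes(-)$. Your direct computation of the cohomology of the folding of $[\cO_{M'}\xrightarrow{s_D}\cO_{M'}(D)]$ is just an unwinding of the paper's appeal to the exact sequence $0\to\cO_{M'}(-D)\to\cO_{M'}\to\cO_D\to 0$ together with \eqref{duality}, so there is no substantive difference.
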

		
		\begin{proof}
		Since $\h^{ |F |_{Z(\sigma)}  | }_{Z(\sigma)}  ( \{ 0,  t_{F | _ {Z( \sigma )} } \} )  (\mathcal{S}) = \h^{ Z( w_\sigma ) }_{Z(\sigma)}   ( \{ p^* \sigma,  t_F \} ) ( \mathcal{G} - g_* \tilde{\mathcal{G}})$ by \lemref{lem:functoriality:Proper}, it is enough to show that $\h^{ Z( w_\sigma ) }_{Z(\sigma)}   ( \{ p^* \sigma,  t_F \} ) (g_* \tilde{\mathcal{G}}) =  b_*  ( -[\cO_D( D )] \otimes   \h^{  | K | } _{M'}   (  \{ 0 ,  t_K \}  )   (    \tilde{\mathcal{G}}   ) )$.
		We obtain
		\begin{align*}
		\h^{ Z( w_\sigma ) }_{Z(\sigma)}   ( \{ p^* \sigma,  t_F \} ) (g_* \tilde{\mathcal{G}}) & =  b_*  (\h^{|K|}_{D} (\{p'^{*}\sigma', t_{F'}\} |_{|K|})(\tilde{\mathcal{G}})) \\
		& = b_*(\h^{|K|}_{D} (\{s_D,0\} \ot \{0, t_K\})(\tilde{\mathcal{G}})) \\
		& = b_*(\h^{M'}_{D} (\{s_D,0\} )  (\h^{|K|}_{M'} (\{0, t_K\})(\tilde{\mathcal{G}}))) \\
		& = b_*  ( -[\cO_D( D )] \otimes   \h^{  | K | } _{M'}   (  \{ 0 ,  t_K \}  )   (    \tilde{\mathcal{G}}   ) )
		\end{align*}
		Here, the first equality is by \lemref{lem:functoriality:Proper}; the second equality is by \lemref{lem:splittingprinciple}; the third equality comes from \lemref{Ring:hom}; and the fourth equality follows from the short exact sequence
		$$0 \ra \cO_{M'}(-D) \xrightarrow{s_D} \cO_{M'} \ra \cO_D \ra 0$$
		and the equation \eqref{duality}.
		\end{proof}
	
	\subsection{Cosection Localization}
	
	Kiem and Li in \cite{KiemLi2} defined the cosection localized Gysin map:
	$0^!_{F, \sigma } : G_0(Z(w_\sigma )) \ra G_0(Z(\sigma ))$. The following theorem is a direct consequence of \defref{Defn:$h^Y_X$}, 
	\lemref{lem:const1} and \cite[Theorem 4.1]{KiemLi2} and is the $K$-theoretic counterpart to \cite[Theorem 2.6]{KO1}.
	\begin{thm}\label{Thm1}
		$0^!_{F, \sigma } = \h^{ Z(w_\sigma)  }   _{  Z(\sigma) } (  \{p^*\sigma , t_F\} )$ as homomorphisms $G_0(Z({w_{\sigma})}) \to G_0(Z(\sigma))$.
	\end{thm}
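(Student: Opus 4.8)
The plan is to recall how Kiem--Li construct their cosection-localized Gysin map $0^!_{F,\sigma}$ and to match each ingredient of their definition with the corresponding ingredient in \lemref{lem:const1}. Recall from \cite[Theorem 4.1]{KiemLi2} that $0^!_{F,\sigma}(\mathcal{G})$ is defined precisely by the recipe: pull $\mathcal{G}$ back to the open part $|K|\setminus |F'|_{Z(\sigma')}|$, lift it to $\tilde{\mathcal{G}}\in G_0(|K|)$, write $\mathcal{G}-g_*\tilde{\mathcal{G}} = \iota_*(\mathcal{S})$ for some $\mathcal{S}\in G_0(|F|_{Z(\sigma)}|)$, and then set
\[
0^!_{F,\sigma}(\mathcal{G}) := 0^!_F(\mathcal{S}) + b_*\bigl(-[\cO_D(D)]\otimes 0^!_K(\tilde{\mathcal{G}})\bigr),
\]
where $0^!_F$ and $0^!_K$ denote the ordinary (refined) zero-section Gysin maps for the vector bundles $F|_{Z(\sigma)}$ and $K$. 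The fact that this is well-defined (independent of the choices of $\tilde{\mathcal{G}}$ and $\mathcal{S}$) is part of their theorem.

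First I would observe that, by the last sentence of \secref{taut}, for a vector bundle with zero cosection the map $\h^{|F|}_{M}(\{0,t_F\})$ agrees with the refined Gysin pull-back by the zero section, since $\{0,t_F\}$ is just the Koszul resolution of $0_*\cO_M$; applying this to $F|_{Z(\sigma)}$ and to $K$ gives $\h^{|F|_{Z(\sigma)}|}_{Z(\sigma)}(\{0,t_{F|_{Z(\sigma)}}\}) = 0^!_F$ and $\h^{|K|}_{M'}(\{0,t_K\}) = 0^!_K$. Substituting these two identifications into the formula of \lemref{lem:const1} yields
\[
\h^{Z(w_\sigma)}_{Z(\sigma)}(\{p^*\sigma,t_F\})(\mathcal{G}) = 0^!_F(\mathcal{S}) + b_*\bigl(-[\cO_D(D)]\otimes 0^!_K(\tilde{\mathcal{G}})\bigr),
\]
which is exactly the right-hand side of the Kiem--Li definition of $0^!_{F,\sigma}(\mathcal{G})$. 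Since $\mathcal{G}\in G_0(Z(w_\sigma))$ was arbitrary and both sides are group homomorphisms (the left by \defref{Defn:$h^Y_X$}, the right by \cite[Theorem 4.1]{KiemLi2}), this proves the asserted equality of homomorphisms $G_0(Z(w_\sigma))\to G_0(Z(\sigma))$.

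The main point to be careful about — and what I expect to be the real content rather than formal bookkeeping — is making sure the choices entering the two constructions are literally the same, i.e. that the lift $\tilde{\mathcal{G}}$, the element $\mathcal{S}$, the blow-up $M'$, the exceptional divisor $D$, the bundle $K$, and the maps $b$, $g$, are set up identically in \secref{taut} as in \cite[\S~4]{KiemLi2}; this is already arranged in the exposition preceding \lemref{lem:const1}, which deliberately follows \cite[Theorem 4.1]{KiemLi}. Given that, the only substantive input is the identification of $\h^{\sY}_{\sX}$ of a Koszul complex with zero cosection with the ordinary Gysin map, together with the independence-of-choices statement, which \lemref{lem:const1} already asserts (and which on the Kiem--Li side is their theorem); the agreement of the two well-defined recipes is then immediate. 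Thus the proof is essentially a one-line consequence of \lemref{lem:const1} once the dictionary between notations is in place.
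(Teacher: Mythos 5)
Your proposal is correct and follows essentially the same route as the paper: the paper's proof is precisely a direct comparison of \lemref{lem:const1} with the defining formula (equation 4.9) of the cosection-localized Gysin map in \cite[Theorem 4.1]{KiemLi2}, using the observation from \secref{taut} that $\h$ of a zero-cosection Koszul complex is the ordinary zero-section Gysin pull-back. You have simply spelled out the term-by-term dictionary that the paper leaves implicit.
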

	
	\begin{proof}
		It follows from directly by comparing \lemref{lem:const1} and the construction of the cosection localized Gysin map; see  \cite[Theorem 4.1]{KiemLi2}, in particular equation $4.9$ of {\textit{loc.cit}}.
	\end{proof}

	In \cite[Theorem 5.1]{KiemLi2}, Kiem and Li prove a comparison result between the cosection localized virtual structure sheaf and the virtual structure sheaf as defined in \cite[\S~2.3]{YPLee}. In \corref{CoVir:Sh} below we state the comparison in our context which is a direct consequence of \thmref{Thm1} and the cycle theoretic analogue  of \cite[Corollary 2.7, Corollary 2.8]{KO1}. The  assumptions are as in \cite[\S~2.5]{KO1} which we briefly recall. Let $A\xrightarrow{d} F$ be a complex of vector bundles on $M$ whose dual gives rise to a perfect obstruction theory relative to a pure dimensional stack $\fM$, let $C$ denote the corresponding cone in $F$ and $\sigma$ a cosection of $F$ such that $\sigma \circ d =0$.
	With respect to the cosection $\sigma$ let  $[  \cO^{\vir}_{M, \sigma}  ] $ denote the cosection-localized virtual structure sheaf (see \cite[Theorem 5.1]{KiemLi2}).\\
	
	\begin{cor}\label{CoVir:Sh} With notations as above the following holds:
		\[ [  \cO^{\vir}_{M, \sigma}  ] := 0^!_{F, \sigma} [\cO_C]  = \h^{ Z(w_\sigma)  }   _{  Z(\sigma) } ( \{ p^* \sigma , t_F \}) ( [\cO_C]  ) . \]
	\end{cor}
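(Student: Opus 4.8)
The plan is to deduce the statement directly from \thmref{Thm1} together with a cycle-theoretic identification of the cone class with the virtual structure sheaf of $M$. The first reduction is purely definitional: by \cite[Theorem 5.1]{KiemLi2}, the cosection-localized virtual structure sheaf is \emph{defined} as $[\cO^{\vir}_{M,\sigma}] := 0^!_{F,\sigma}[\cO_C]$, where $C$ is the intrinsic normal cone embedded in $F$ via the relative obstruction theory $A \xrightarrow{d} F$ over $\fM$ and $\sigma \circ d = 0$ guarantees that $C \subset F$ lands inside the cone of degeneracy so that the cosection-localized Gysin map $0^!_{F,\sigma}\colon G_0(Z(w_\sigma)) \to G_0(Z(\sigma))$ can be applied to $[\cO_C] \in G_0(Z(w_\sigma))$ (here $[\cO_C]$ is regarded as a class on $Z(w_\sigma)$ since $C \subseteq |F|$ is set-theoretically supported where $w_\sigma$ vanishes, because $\sigma$ vanishes on $C$). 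So the left equality is just unwinding the definition.

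For the right equality, I would simply apply \thmref{Thm1}, which asserts the identity of homomorphisms $0^!_{F,\sigma} = \h^{Z(w_\sigma)}_{Z(\sigma)}(\{p^*\sigma, t_F\})\colon G_0(Z(w_\sigma)) \to G_0(Z(\sigma))$. Evaluating both sides of that identity at the class $[\cO_C] \in G_0(Z(w_\sigma))$ yields
\[
0^!_{F,\sigma}[\cO_C] = \h^{Z(w_\sigma)}_{Z(\sigma)}(\{p^*\sigma, t_F\})([\cO_C]),
\]
which is precisely the claimed right-hand side. The only subtlety is making sure the hypotheses of \thmref{Thm1} are in force in the present set-up: one needs $\{p^*\sigma, t_F\}$ to be a genuine $2$-periodic complex (which holds on $Z(w_\sigma)$ as recalled in \secref{taut}, since $\lan p^*\sigma, t_F \ran = w_\sigma$ vanishes there) and locally contractible off $Z(\sigma)$ (which is the remark following \defref{defn:Koszul2periodicComplex}, noting $Z(p^*\sigma, t_F) = Z(\sigma)$ inside $Z(w_\sigma)$).

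The remaining point — and the only place where the ``cycle theoretic analogue of \cite[Corollary 2.7, Corollary 2.8]{KO1}'' is invoked — is to confirm that the input class really is $[\cO_C]$ in both formulations, i.e.\ that the $K$-theoretic cone class used to build $[\cO^{\vir}_{M,\sigma}]$ in \cite{KiemLi2} agrees with the class $[\cO_C] \in G_0(Z(w_\sigma))$ that we feed into $\h^{Z(w_\sigma)}_{Z(\sigma)}$. This is a compatibility between the two constructions of virtual structure sheaves (the cone-theoretic one of \cite{KiemLi2} and the derived-category/obstruction-theory one of \cite[\S~2.3]{YPLee}), and it is exactly the content that \cite[Corollary 2.7, Corollary 2.8]{KO1} established on the level of Chow groups; here one reruns the same argument with $G$-theory in place of Chow. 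I expect this bookkeeping step — tracking that the two notions of ``$[\cO_C]$'' coincide under the relevant pushforwards and refined Gysin maps — to be the main (though not conceptually deep) obstacle; everything else is a formal consequence of \thmref{Thm1}.
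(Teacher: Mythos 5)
Your proposal is correct and follows exactly the route the paper takes: the left equality is the definition of the cosection-localized virtual structure sheaf from \cite[Theorem 5.1]{KiemLi2}, and the right equality is \thmref{Thm1} evaluated at $[\cO_C]$, with the cycle-theoretic analogue of \cite[Corollary 2.7, Corollary 2.8]{KO1} supplying the compatibility of the cone class. The paper in fact gives no more detail than this, so your verification of the hypotheses of \thmref{Thm1} is if anything slightly more careful than the original.
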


	\noindent If $\sigma=0$, $[  \cO^{\vir}_{M, 0 } ]$ in \corref{CoVir:Sh}, then it reduces to the  original definition by Y.-P. Lee \cite[\S~2.3]{YPLee}.

	\section{A Comparison Result}\label{section:com_sh}

	In this section, we assume that the base field is $\mathbb{C}$. The main goal of this section is to prove the $K$-theoretic analogue of 
	\cite[Theorem 3.2]{KO1}. Before we state the main theorem we need to recall the set-up and notations from \cite{KO1}. In  \secref{subsec: SetupNotations} we recall all the notations and basic theorems which would be needed to state the main theorem precisely and \secref{subsec: Proof of Main Theorem} is dedicated to the proof of \thmref{Comp:Thm}. 
	The proof closely follows the strategy as in \cite[Section 3.2]{KO1} and relies on the geometric constructions proved there. We apply Lemma \ref{homInv}, Lemma \ref{Ring:hom} and Theorem \ref{Thm1} to prove Theorem \ref{Comp:Thm}.  The assumptions and notations used are as in {\it{loc.cit}} and we summarize  them below to make the text comprehensible.

	 \subsection{Setup}{\cite[\S~3.1]{KO1}}\label{subsec: SetupNotations}
	 We briefly recall the basic set-up and notations and refer the reader to \cite[\S~3.1]{KO1} for a detailed discussion. \\

	 \paragraph*{\bf Geometric side of a hybrid gauged linear sigma model  $(V_1 \oplus V_2, G, \theta, w)$:}

	 Let $G$ be a reductive linear algebraic group  and let $V_1 := \Spec ~\A^{m}_\C$ and let $V_2: =\Spec~ \A^{n}_\C$ be affine varieties over $\Spec~ \C$. Let us assume that $V_1$ and $V_2$ have a linear $G$-action. Note that $V_1$ and $V_2$  could also be viewed as vector spaces over $\C$ and when confusion does not arise we also denote the underlying vector spaces with the same notation. Let $\theta$ be a character of $G$ such that the semistable and stable points of $V_1$  with respect to $\theta$ coincide, which we denote by $V_1^{ss}(\theta)$. We denote by $E$ the quotient stack $[({V_1^{ss}(\theta) \times_{\C} V_2})/{G}]$, which is naturally  the total space of a vector bundle on the stack $[V_1^{ss}(\theta)/G]$.  Note that $w \in ((\Sym^{\ge 1} V^{\vee}_1) \ot V_2^\vee)^{G}$ induce a regular morphism $ V_1 \to V_2^\vee$ which we denote by $f$, a section $[V_1^{ss}(\theta)/G] \to E^\vee$ which we denote by $s$ and a regular morphism $E \to \A^1_\C$ which is denoted by $\underline{w}$. Let $Z(d\underline{w})$ denote the critical locus of $\underline{w}$ and $Z(s)$ denote the zero locus of $s$. Note  that $Z(d\underline{w}) \cap [V_1^{ss}(\theta)/G]  = Z(s)$, where we further work under the assumption that $Z(d\underline{w})$ is a smooth closed sub-locus in $[V_1^{ss}(\theta)/G]$ of codimension $n$.\\

\paragraph{\bf Moduli Spaces:} 
\begin{enumerate}
	\item Let $\fM _{g, k}(BG, d)$  denote the  moduli space of principal $G$-bundles on genus $g$, $k$-pointed prestable orbi-curves $C$ with degree $d$ such that the associated classifying map $C \ra BG$ is representable.  $\fM _{g, k}(BG, d)$ is a smooth Artin stack (see \cite[\S~2.1]{CKM}, \cite[\S~2.4.5]{CCK}). Let $\pi: \mathfrak{C} \to \fM _{g, k}(BG, d)$ and $\mathcal{P}$ denote the universal curve and the universal $G$-bundle on $\mathfrak{C}$ respectively. For notational convenience we let $\fB: = \fM _{g, k}(BG, d)$.\\
	
	\item Let $Q^{\ke}_{g, k} ( \Zs , d)$ denote the moduli space of genus $g$, $k$-pointed $\ke$-stable quasimaps to $Z(s)$ of degree $d$ which is a separated $\DM$-stack (see \cite[\S~2]{CCK}). Let $X : = Z(s)$, then we let $Q_X^{\ke} := Q^{\ke}_{g, k} ( \Zs , d)$ and   $Q_{V_1}^{\ke} := Q^{\ke}_{g, k} ( V_1/\!\!/G , d)$.\\
	
	\item\cite[\S~3.1.2]{KO1} Let $LGQ^{\ke}_{g, k} (E, d)'$ denote the moduli space of genus $g$, $k$-pointed, degree $d$, $\ke$-stable quasimaps to $V_1/\!\!/G$ with $p$-fields( see \cite{CL, MF,  FJR:GLSM}) which is a separated $\DM$-stack. When confusion does not arise we abbreviate this notation to  $\LGQ '$. \\
	\end{enumerate}

	\paragraph*{\bf  Perfect Obstruction Theories:}
	\begin{enumerate}
			\item Consider the notations as in  \cite[\S3.1.1: Conventions]{KO1}. Let $\sV_i: =  \sP \times_G V_i$ be the  bundles on $\mathfrak{C}$ for $i = 1,2$ and let us denote by $\mathbf{u}$  the universal section. Note here that we use the same abuse of notation as in {\it loc.cit} where $\fC$ and $\sP$ are used to denote the universal curve and universal $G$-bundle on all moduli spaces defined above. \\
			
			\item $Q_X^{\ke}$ has a canonical perfect obstruction theory relative to $\fM _{g, k}(BG, d)$ given by the dual to, 
			\begin{equation} \label{Obst}
				R\pi_*(\mathbf{u}^*(\sP \times_G df^\vee)) : R\pi_*\sV_1 \to R\pi_*\sV_2^{\vee}, 
			\end{equation} 
			see \cite[\S~3, equation 3.2]{KO1}.\\
			
			\item $LGQ'$ has canonical perfect obstruction theory relative to $\fM _{g, k}(BG, d)$ given by, 
			\begin{equation}
				R \pi _* ( \cV_1 \op  \cV_2 \ot \omega_\fC)^\vee, 
			\end{equation}
			where $\omega_\fC$ is a dualizing sheaf on $\fC$ relative to $\LGQ'$. 
			Further, there is cosection $dw_{\LGQ '}: R ^1\pi_* (\cV_1\oplus \cV_2\ot \omega _{\fC}) \ra \cO_{LGQ'} $ (see \cite[\S~3.1.2]{KO1}).\\
	\end{enumerate}

\begin{defn}[$\det R \pi_* \cV$ \& $\chi^{gen}(R \pi_* \cV)$]\label{defn:det}
	Let $\pi: \fC \ra Q^\ke_{g,k}(V_1 /\!\!/ G, d)$ be the universal curve and let $\cV$ be a vector bundle on $\fC$.	As $\pi$ is ample, 
	$\cV$ can always fit into a short exact sequence $0 \ra \cV \ra \mathcal{A} \ra \mathcal{B} \ra 0$ for some $\pi$-acyclic vector bundles $\mathcal{A}$ and $\mathcal{B}$, and further  $\pi_*\mathcal{A}$ and $\pi_*\mathcal{B}$ are vector bundles on $Q^\ke_{g,k}(V_1 /\!\!/ G, d)$. Then we define, 
	\begin{align}\label{Determinant}
\det R \pi_* \cV: &= \det \pi_* \mathcal{A} \ot (\det \pi_* \mathcal{B})^\vee. 
	\end{align}
	$\det R \pi_* \cV$ is called the {\it determinant } of $R\pi_* \sV$.\\
	Further note that $R^i\pi_*\sV = 0$ is zero for $i \neq 0,1$. Let us define, 
	\begin{equation}
\chi^{gen}(R \pi_* \cV) := \dim_{gen} R^0\pi_*\sV - \dim_{gen} R^1\pi_* \sV,
	\end{equation} 
	where $\dim_{gen}$ denotes the generic dimension of the respective coherent sheaves. $\chi^{gen}(R\pi_*\sV)$  is called the {\it  generic virtual rank } of $R\pi_*\sV$.
\end{defn}

	 	 \begin{remk}
	 	 	Note that  \defref{defn:det} is independent of the choice of the $\pi$-acyclic resolution. Suppose our moduli spaces are quasi-projective and  $0 \to A_1 \to B_1 \to 0$ and $0 \to A_2 \to B_2 \to 0$ are two complexes of vector bundles on the moduli space (not on the universal curves!) which are quasi-isomorphic then the determinant as defined in \eqref{Determinant} coincide for both the complexes. In particular note that this is the case for $Q^{\ke}_X$, $Q^{\ke}_{V_1}$ and $\LGQ'$ when $\ke = \infty$.
	 	\end{remk}
	 
 In Lemma \ref{Const:KO}, Lemma \ref{Const:KO3.2.2} and Lemma \ref{Const:KOcosec} below we briefly summarize the constructions of  \cite[Section 3.2]{KO1} that are required for the proof of the main theorem.

	\begin{lem}[See Section 3.2.1 \cite{KO1}] \label{Const:KO}
		
		\begin{enumerate}
			\item[]
			
			\item On some non-empty open substack $\fBo \subset \fB$, there are chain map representations:
			$[ A_1 \xrightarrow{d_{A_1}} B_1]$ of $R \pi_* \cV_1$ and $[Q \xrightarrow{d_Q} P^\vee]$ of $R \pi_* \cV^\vee_2$ respectively.
			
			\item  There is an open substack $U^{\ke} \subset | A_1 |$ such that $Z(d_{A_1}t_{A_1}) \cap U^\ke = Q_{V_1}^{\ke} $.
			
			\item  There are morphisms $\phi_{A_1} : A_1 |_{U^\ke} \ra Q|_{U^\ke}$ 
			and $\phi_{B_1} : B_1|_{Q^\ke_{V_1}} \ra P^\vee|_{Q^\ke_{V_1}}$ (they are defined on different underlying spaces!) such that
			there is a commutative diagram
			\[
			\xymatrix{
				A_1|_{Q^\ke_{V_1}} \ar[r]^-{d_{A_1}} \ar[d]_-{\phi_{A_1} |_{Q_{V_1}^{\ke}} } & B_1|_{Q^\ke_{V_1}} \ar[d]^-{\phi_{B_1}} \\
				Q|_{Q^\ke_{V_1}} \ar[r]_-{d_Q} & P^\vee|_{Q^\ke_{V_1}}
			}
			\]
			which represents  $\eqref{Obst}|_{Q_{V_1}^{\ke}}$ on $Q_{V_1}^{\ke}$. 
		\end{enumerate}
	\end{lem}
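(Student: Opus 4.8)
The plan is to reprove the constructions recorded in \cite[\S 3.2.1]{KO1}, following the same strategy as in that reference (and, for the resolution statements, \cite{CCK, CKM}); I outline the steps and flag where care is needed.

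\emph{Step 1 (choice of $\fBo$ and the two-term representations).} First I would choose a quasi-compact open substack $\fBo \subset \fB$ whose complement is disjoint from the images of $Q^\ke_X$ and $Q^\ke_{V_1}$ under their structure morphisms to $\fB$; this is possible because, under the running finiteness assumptions, $Q^\ke_X$ and $Q^\ke_{V_1}$ are of finite type over $k$, hence have quasi-compact images in $\fB$ (and $Q^\ke_X$ maps into the image of $Q^\ke_{V_1}$). Over $\fBo$ the complexes $R\pi_*\cV_1$ and $R\pi_*\cV_2^\vee$ are perfect of tor-amplitude $[0,1]$, since $\pi$ is a proper flat family of twisted curves and $\cV_1,\cV_2$ are locally free. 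To produce the representations in (1), I would fix a $\pi$-relatively ample line bundle on the universal curve over $\fBo$ and a twist $N\gg 0$ such that $\cV_1(N)$, $\cV_2^\vee(N)$ and their restrictions to a suitable relative divisor $D$ are all $\pi$-acyclic with locally free pushforward; the defining sequences $0\to\cV_1\to\cV_1(N)\to\cV_1(N)|_D\to 0$ (and its analogue for $\cV_2^\vee$) then give, after applying $\pi_*$, two-term complexes of vector bundles $[A_1\xrightarrow{d_{A_1}}B_1]$ and $[Q\xrightarrow{d_Q}P^\vee]$ on $\fBo$ representing $R\pi_*\cV_1$ and $R\pi_*\cV_2^\vee$; since their terms are flat over $\fBo$, these representations are compatible with arbitrary base change $T\to\fBo$.

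\emph{Step 2 (identifying $Q^\ke_{V_1}$ inside $|A_1|$).} Let $t_{A_1}\in H^0(|A_1|,p^*A_1)$ be the tautological section and $d_{A_1}t_{A_1}\in H^0(|A_1|,p^*B_1)$ its image. By the base-change property of $[A_1\to B_1]$, for an $\fBo$-scheme $T$ a $T$-point of $Z(d_{A_1}t_{A_1})$ is the same datum as an element of $H^0(T,\ker d_{A_1,T})=H^0(\fC_T,\cV_{1,T})$; thus $Z(d_{A_1}t_{A_1})\subset|A_1|$ is the moduli stack parametrizing a prestable twisted curve with a degree-$d$ $G$-bundle together with a section of the associated $V_1$-bundle, that is the (prestable) quasimap cone of \cite{CCK}. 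The $\ke$-stability conditions are open, so they single out an open substack of $Z(d_{A_1}t_{A_1})$ which is canonically $Q^\ke_{V_1}$; as $Z(d_{A_1}t_{A_1})$ is closed in $|A_1|$, its subset $Z(d_{A_1}t_{A_1})\setminus Q^\ke_{V_1}$ is closed in $|A_1|$, so I set $U^\ke:=|A_1|\setminus\big(Z(d_{A_1}t_{A_1})\setminus Q^\ke_{V_1}\big)$, which is open and satisfies $Z(d_{A_1}t_{A_1})\cap U^\ke=Q^\ke_{V_1}$.

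\emph{Step 3 (the comparison square), which I expect to be the main obstacle.} The morphism $\mathbf{u}^*(\cP\times_G df^\vee):\cV_1\to\cV_2^\vee$ on the universal curve depends only on the universal section, and its linearity in the section lets one build the degree-$0$ arrow $\phi_{A_1}:A_1|_{U^\ke}\to Q|_{U^\ke}$ from the tautological data already over $U^\ke$ (applying the recipe of Step 1 to the $A$-terms). The compatible arrow on the $B$-terms, however, can only be constructed after restricting to the locus where the tautological section descends to a genuine section of $\cV_1$, that is over $Z(d_{A_1}t_{A_1})$ and hence over $Q^\ke_{V_1}$, giving $\phi_{B_1}:B_1|_{Q^\ke_{V_1}}\to P^\vee|_{Q^\ke_{V_1}}$. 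Over $Q^\ke_{V_1}$ the pair $(\phi_{A_1},\phi_{B_1})$ then assembles into a commutative square of vector-bundle maps, and functoriality of the chosen chain-level representatives shows that this square represents $\eqref{Obst}|_{Q^\ke_{V_1}}$. The hard part is exactly this last point: arranging the auxiliary resolutions compatibly enough that the derived morphism $R\pi_*\cV_1\to R\pi_*\cV_2^\vee$ is realized by an honest commutative square, while keeping precise track of the distinct loci — $U^\ke$ for $\phi_{A_1}$, only $Q^\ke_{V_1}$ for $\phi_{B_1}$ — on which the two vertical arrows are defined; all of this is carried out in detail in \cite[\S 3.2.1]{KO1}.
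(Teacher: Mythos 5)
The paper offers no proof of this lemma beyond the citation: it is explicitly presented as a summary of the constructions of \cite[\S 3.2]{KO1}, and your sketch reconstructs exactly those constructions — the twist-by-a-relatively-ample-divisor resolution for part (1), the identification of $T$-points of $Z(d_{A_1}t_{A_1})$ with sections of $\cV_1$ together with openness of $\ke$-stability for part (2), and the chain-level realization of \eqref{Obst} for part (3). The one step you leave to \cite[\S 3.2.1]{KO1} (choosing the resolutions compatibly so that the square in (3) commutes on the nose over $Q^\ke_{V_1}$, with $\phi_{A_1}$ defined on all of $U^\ke$ but $\phi_{B_1}$ only on $Q^\ke_{V_1}$) is precisely the step the paper also defers to that reference, so your proposal matches, and indeed elaborates on, the paper's own treatment.
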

	\medskip
	
	\noindent Let $K$ be the kernel of $(\bar{\phi} _{B_1} - d_Q) |_{Q^\ke_X}$.
	By Lemma \ref{Const:KO} (3), 
	$$[ B_1\oplus Q \xrightarrow{\bar{\phi} _{B_1} - d_Q} P^\vee ] |_{Q^\ke_X}$$
	is surjective and $[A_1 |_{Q^\ke_X} \ra K]$ represents the dual of the natural perfect obstruction theory  {\eqref{Obst}} of $Q^\ke_X$ over $\fBo$. Also, by Lemma \ref{Const:KO} (1), $[A_1 \op P \xrightarrow{d_{A_1} \op -d^\vee_Q}  B_1 \op Q^\vee] |_{LGQ'}$ represents the dual of the natural perfect obstruction theory $R \pi_* (\cV_1 \op \cV_2 \ot \omega_\fC) $ of $LGQ'$ over $\fBo$.
	
	Let $U:=U^\ke \times_{\fBo} |P| \times_{\fBo} | Q| \cong | (P \op Q) |_{U^\ke} |$ be the total space of a vector bundle $P \op Q|_{U^\ke}$ over $U^\ke$ 
	where $|_{U^\ke}$ denotes the pullback under the morphism $U^\ke \ra \fB^\circ$ and $\tilde{U}:= U \times_\C \mathbb{A}^1_\C$.
	Let
	$F:= ( B_1 \op Q^\vee \op Q )  |_U$ be a vector bundle on $U$ and $\tilde{F} := F|_{\tilde{U}}$.
	Let $\tilde{p}: \tilde{F} \ra \tilde{U}$ be the projection.
	In \S 3.2.3 \cite{KO1}, the following is constructed:
	
	\begin{lem} \label{Const:KO3.2.2}
	Let $\A^1_\C : =\Spec ~\C[\lambda]$. There exists a section  $\beta: \cO_{\tilde{U}} \ra \tilde{F}$  such that, 
			\begin{align*} Z(\beta) \cong \left\{
				\begin{array}{cl}
					LGQ'  & \text{if } \lambda \neq 0 \in \mathbb{A}^1_\C, \\
					Q^\ke_X \times_{\fB^\circ} |P | \times_{\fB^\circ} |Q| & \text{if } \lambda = 0 \in \mathbb{A}^1_\C.
				\end{array}
				\right.
			\end{align*}
			See the equation (3.10) in \cite{KO1}. Moreover, when $\lambda \neq 0$, the following is a commutative diagram:
			\[
			\xymatrix{
				Z(\beta|_\lambda)  \ar@{^{(}->}[r] \ar[d]^{\cong} & \tilde{U}|_\lambda \cong U \ar[d]^{pr} \\
				LGQ' \ar@{^{(}->}[r] & U^\ke \times_{\fB^\circ} |P|.
			}
			\]
			Here $pr$ is a projection.
			Similarly, when $\lambda=0$, we have a fiber diagram
			\[
			\xymatrix{
				Z(\beta|_0)  \ar@{^{(}->}[r] \ar[d]^{pr} & \tilde{U}|_\lambda \cong U \ar[d]^{pr} \\
				Q^\ke_X \ar@{^{(}->}[r] & U^\ke .
			}
			\]
	\end{lem}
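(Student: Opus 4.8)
The plan is to carry over the construction of \cite[\S~3.2.3]{KO1} essentially unchanged: the statement is purely geometric — it concerns only the spaces $\tilde U$, $\tilde F$ and the quasi-coherent sheaves $A_1, B_1, P, Q$ on $\fBo$ furnished by Lemma~\ref{Const:KO} — and nothing in that construction is sensitive to whether one afterwards works in the Chow ring or in $K$-theory.

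First I would write the section $\beta$ down explicitly. Over $U^\ke \subset |A_1|$ there is the tautological section $t_{A_1}$ of $A_1|_{U^\ke}$, and over $|P|$ and $|Q|$ there are the tautological sections $t_P$ of $P$ and $t_Q$ of $Q$; pulling these back to $\tilde U = U \times_\C \A^1_\C$ and writing $\lambda$ for the coordinate on $\A^1_\C = \Spec~\C[\lambda]$, one assembles $\beta$ as a section of $\tilde F = (B_1 \oplus Q^\vee \oplus Q)|_{\tilde U}$ out of $d_{A_1} t_{A_1}$, $d^\vee_Q t_P$, $\phi_{A_1} t_{A_1}$, the multiple $\lambda\, t_Q$, and the morphisms $\bar{\phi}_{B_1}$, $d_Q$ of Lemma~\ref{Const:KO}(3), following equation~(3.10) of \cite{KO1} verbatim, with the $\lambda$'s placed precisely on the terms that involve the fibre coordinates $t_P$ and $t_Q$. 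The only thing to verify at this stage is that these expressions are $G$-equivariant and glue to a global section of $\tilde F$, which is immediate from Lemma~\ref{Const:KO}.

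Next I would analyse $Z(\beta)$ fibrewise over $\A^1_\C$. For $\lambda \neq 0$ the term $\lambda\, t_Q$ lets one solve the $Q$-component of $\beta$ for the fibre coordinate $t_Q$ in terms of the pullback of the coordinates on $U^\ke \times_{\fBo} |P|$; the projection $pr$ then identifies $Z(\beta|_\lambda)$ with the zero scheme of the remaining two components inside $U^\ke \times_{\fBo} |P|$, which by the presentation $[A_1 \oplus P \xrightarrow{d_{A_1} \oplus -d^\vee_Q} B_1 \oplus Q^\vee]|_{LGQ'}$ of the dual of the perfect obstruction theory of $LGQ'$ over $\fBo$ (recalled above) is exactly $LGQ'$. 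For $\lambda = 0$ the $\lambda$-terms drop out, so the surviving components of $\beta|_0$ involve only the $U^\ke$-coordinates: using Lemma~\ref{Const:KO}(2) the $B_1$-component cuts $Q^\ke_{V_1}$ out of $U^\ke$, the $Q$-component further cuts $Q^\ke_X$ out of $Q^\ke_{V_1}$, and nothing constrains the $|P|$- or $|Q|$-directions, whence $Z(\beta|_0) \cong Q^\ke_X \times_{\fBo} |P| \times_{\fBo} |Q|$ and the displayed fibre diagram over $U^\ke$ follows.

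The step requiring care — and the reason the construction is written out carefully in \cite{KO1} — is this matching: one has to arrange the $\lambda$-dependence so that at $\lambda \neq 0$ the $|Q|$-direction (and, through $\bar{\phi}_{B_1} - d_Q$, the fibre direction coming from $Q^\vee$) is genuinely eliminated, i.e. $Z(\beta|_\lambda) \to U^\ke \times_{\fBo} |P|$ is an \emph{isomorphism} onto $LGQ'$ and not merely a fibration, while at $\lambda = 0$ the full product with $|P| \times_{\fBo} |Q|$ must reappear. Once $\beta$ has been pinned down to agree with \cite[(3.10)]{KO1}, this is a direct computation with the chain maps of Lemma~\ref{Const:KO}, which I would simply quote from \cite[\S~3.2.3]{KO1}.
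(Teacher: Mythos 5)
Your proposal is correct and matches the paper's treatment: the paper gives no proof of this lemma at all, presenting it purely as a summary of the construction in \cite[\S~3.2.3, eq.~(3.10)]{KO1}, which is exactly what you do (with the added benefit of sketching how the fibrewise analysis over $\A^1_\C$ goes). The only point to watch in your sketch is that at $\lambda=0$ the $Q^\vee$-component must also vanish for the full $|P|$-factor to survive, so the $\lambda$ must sit on that term as well --- but since you defer the precise placement to eq.~(3.10) of \cite{KO1}, as the paper does, this is consistent.
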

	
	\noindent By Lemma \ref{Const:KO3.2.2}, we have 
	\begin{align}\label{eqn:structuresheaf}
		C_{LGQ'} (U^\ke \times_{\fBo} |P|) \times_{\fBo} |Q| & \cong C_{LGQ' \times_{\fBo} |Q|} U   \cong C_{Z(\beta)}  \tilde{U} |_\lambda, \ \ \lambda \neq 0, \\
		C_{Q^\ke} U^\ke \times_{\fBo} |P| \times_{\fBo} |Q|  & \cong C_{Q^\ke \times_{\fBo} |P| \times_{\fBo} |Q|} U \subset  C_{Z(\beta)}  \tilde{U} |_0 \nonumber
	\end{align}
	where the notation $C_W$ denotes the corresponding cone for a space $W$.
	
	By the construction of $\beta$ in  \cite[equation 3.10]{KO1}, we observe that
	the restriction to $\lambda \in \mathbb{A}^1_\C$ of the dual of a perfect obstruction theory $[T_{\tilde{U} / \fBo \times \mathbb{A}^1_\C} \xrightarrow{d \beta} \tilde{F}]|_{Z(\beta)}$ of $Z(\beta)$ over $\fBo \times \mathbb{A}^1_\C$ is isomorphic to $[A_1 \op P \xrightarrow{d_{A_1} \op -d^\vee_Q}  B_1 \op Q^\vee] \op [Q \xrightarrow{id} Q] |_{LGQ' \times_{\fBo} |Q|}$ when $\lambda \neq 0$.
	When $\lambda =0$, it deforms to $[A_1 |_{Q^\ke_X} \ra K] \op [P \op Q \xrightarrow{-d^\vee_Q \op d_Q} Q^\vee \op P^\vee] |_{Q^\ke_X \times_{\fBo} |P| \times_{\fBo} |Q|} $ (see \cite[\S~2.4.1 and \S~3.2.4]{KO1}).

	\medskip

	\begin{lem} \label{Const:KOcosec}
		A cosection $\sigma : \tilde{F}|_{Z(\beta)} \rightarrow \cO_{Z(\beta)}$ can be constructed such that
		$
		Z(\sigma) \cong Q^\ke_X \times \mathbb{A}^1_\C
		$
	; see the equation (3.10) in \cite{KO1}.
		Moreover, when $\lambda \neq 0$, $\sigma|_\lambda$ coincides with $dw_{LGQ'} \boxplus 0 : (B \op Q^\vee) \op Q |_{Z(\beta)|_\lambda} \ra \cO_{Z(\beta)|_\lambda}$ under the isomorphism above.
		When $\lambda =0$, $\sigma|_0$ deforms to $0 \boxplus \taut : K \op (Q^\vee \op P^\vee) |_{Z(\beta)|_0} \ra \cO_{Z(\beta)|_0}$ under the deformation above. 
		Here, the cosection $\taut$ is explained in the diagram \cite[page 17]{KO1}.
	\end{lem}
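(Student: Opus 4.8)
The plan is to reproduce the construction of \cite[\S~3.2.4, equation~(3.10)]{KO1} verbatim: the cosection $\sigma$, its zero locus, and its behaviour over $\A^1_\C$ are purely geometric and are insensitive to whether one works in Chow groups or in $K$-theory, so nothing in \emph{loc.~cit.} needs to be modified. Concretely, one writes $\sigma$ on $\tilde{F}|_{Z(\beta)} = (B_1 \op Q^\vee \op Q)|_{Z(\beta)}$ as the explicit formula of \cite[(3.10)]{KO1}, assembled from the cosection $dw_{\LGQ'}$ of the relative obstruction theory of $\LGQ'$ along the $B_1 \op Q^\vee$ directions and from the tautological pairing, twisted by the coordinate $\lambda$ of $\A^1_\C$, along the $Q$ direction, arranged so that $\sigma \circ \beta = 0$; hence $\sigma$ descends to a cosection of $\tilde{F}$ on $Z(\beta)$. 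The vanishing $\sigma \circ \beta = 0$ is checked directly from the formula for $\beta$ in \cite[(3.10)]{KO1}.

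The compatibilities are then obtained by restricting along $\A^1_\C$ using \lemref{Const:KO3.2.2} and the description, recorded just after it, of how the relative obstruction theory of $Z(\beta)$ degenerates. For $\lambda \neq 0$, the isomorphism $Z(\beta)|_\lambda \cong \LGQ'$ turns the $B_1 \op Q^\vee$-component of $\sigma$ into $dw_{\LGQ'}$ while the $\lambda$-twist forces the $Q$-component to vanish identically, yielding $\sigma|_\lambda = dw_{\LGQ'} \boxplus 0$. For $\lambda = 0$, under the degeneration of the obstruction theory to $[A_1|_{Q^\ke_X} \to K] \op [P \op Q \to Q^\vee \op P^\vee]$ the relevant bundle is identified with $K \op (Q^\vee \op P^\vee)$, and $\sigma|_0$ becomes $0 \boxplus \taut$, where $\taut$ is the tautological cosection of \cite[page~17]{KO1}. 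Finally $Z(\sigma) \cong Q^\ke_X \times \A^1_\C$ is verified fiberwise over $\A^1_\C$: for $\lambda \neq 0$ one has $Z(\sigma|_\lambda) = Z(dw_{\LGQ'}) = Q^\ke_X$ since the zero locus of the $p$-field cosection is exactly the quasimap locus, and for $\lambda = 0$ one has $Z(0 \boxplus \taut) \cong Q^\ke_X$ via the fiber square of \lemref{Const:KO3.2.2}; since $\beta$ and $\sigma$ are built over $\fBo \times \A^1_\C$ these fiberwise identifications assemble to the claimed isomorphism.

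The only genuine subtlety — and the main obstacle — is keeping the two fiberwise pictures coherent: exhibiting $\sigma$ as a single cosection on $\tilde{F}|_{Z(\beta)}$ whose restrictions to $\lambda \neq 0$ and $\lambda = 0$ are the two descriptions above, and checking the relation $\sigma \circ \beta = 0$ over all of $Z(\beta)$ rather than merely fiber by fiber. This is precisely what the explicit construction of \cite[\S~3.2.4, (3.10)]{KO1} delivers, and since that construction involves no choice of homology or $K$-theory it carries over to the present setting without change.
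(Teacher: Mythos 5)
Your proposal matches the paper exactly: the paper gives no proof of this lemma, presenting it purely as a summary of the construction in \cite[\S~3.2, equation (3.10) and the diagram on page 17]{KO1}, which is precisely the source you propose to reproduce verbatim. Your observation that the construction is purely geometric and independent of whether one works in Chow groups or $K$-theory is the (implicit) justification the paper relies on as well.
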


	\subsection{Proof of the Main Theorem}\label{subsec: Proof of Main Theorem}
	
	Now we state and prove the main theorem of this section,

	\begin{thm}\label{Comp:Thm}
		In the Grothendieck group of coherent sheaves on $Q^\ke_X:=Q^{\ke}_{g, k} ( Z(s) , d)$, we have
		\begin{align}\label{Vir:Eq2}[ \cO_{Q^{\ke}_X}^{\vir}] = (-1)^{\chi^{gen} ( R\pi_*\cV ^{\vee}_2 ) } \det R \pi_*(\cV_2 \ot \omega_\fC)^\vee  |_{Q_X^{\ke}} \ot  [ \cO_{LGQ',  dw_{LGQ'}}^{\vir} ]
		\end{align}
		where $\chi^{gen} ( \cV ^{\vee}_2 )$ is generic virtual rank  of  $R \pi _* \cV_2 ^{\vee}$ and $\det R \pi_*(\cV_2 \ot \omega_\fC)^\vee$ is the determinant (see \defref{defn:det}).
	\end{thm}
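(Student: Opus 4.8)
The plan is to transcribe \cite[\S 3.2]{KO1} into $K$-theory, replacing the localized Chern character by the map $\h$ of \defref{Defn:$h^Y_X$} and invoking \thmref{Thm1}, \corref{CoVir:Sh}, \lemref{Ring:hom}, \lemref{lem:functoriality:pullback} and \lemref{homInv}, together with the geometric constructions recorded in \lemref{Const:KO}, \lemref{Const:KO3.2.2} and \lemref{Const:KOcosec}. \emph{The deformation.} First I would apply \corref{CoVir:Sh} to the perfect obstruction theory $[T_{\tilde{U}/\fBo\ti\A^1_\C}\xrightarrow{d\beta}\tilde{F}]|_{Z(\beta)}$ of $Z(\beta)$ over $\fBo\ti\A^1_\C$, with cone $C_{Z(\beta)}\tilde{U}$ (it lies in $Z(w_\sigma)$ since $\sigma\circ d\beta=0$) and cosection $\sigma$ from \lemref{Const:KOcosec}. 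As $Z(\sigma)\cong Q^\ke_X\ti\A^1_\C$ and $G_0(Q^\ke_X\ti\A^1_\C)\cong G_0(Q^\ke_X)$ by $\A^1$-homotopy invariance, this yields a class $[\cO^{\vir}_{Z(\beta),\sigma}]=\h^{Z(w_\sigma)}_{Z(\sigma)}(\{p^*\sigma,t_{\tilde{F}}\})([\cO_{C_{Z(\beta)}\tilde{U}}])\in G_0(Q^\ke_X)$. By \lemref{lem:functoriality:pullback} and \lemref{homInv} the Gysin restrictions $0^!$ and $1^!$ of this class to the fibres over $\lambda=0$ and $\lambda=1$ agree in $G_0(Q^\ke_X)$, and each equals the cosection-localized virtual structure sheaf of the corresponding fibre (using that the cone restricts correctly, \eqref{eqn:structuresheaf}). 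It then remains to compute the two fibres and equate them.

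\emph{The generic fibre.} By \lemref{Const:KO3.2.2} and \lemref{Const:KOcosec}, at $\lambda=1$ the obstruction theory restricts to $\mathbb{E}_{LGQ'}\op[Q\xrightarrow{\id}Q]$, the cosection to $dw_{LGQ'}\boxplus 0$ on $(B_1\op Q^\vee)\op Q$, and by \eqref{eqn:structuresheaf} the cone to $C_{LGQ'}(U^\ke\ti_{\fBo}|P|)\ti_{\fBo}|Q|$. Since the Koszul complex splits, $\{p^*(dw_{LGQ'}\boxplus0),t_{\tilde{F}|_1}\}\cong\{p^*(dw_{LGQ'}),t_{B_1\op Q^\vee}\}\ot\{0,t_Q\}$, \lemref{Ring:hom} factors $\h$ through the zero-section Gysin map $\h(\{0,t_Q\})$ of $Q$, which collapses the $|Q|$-factor; the remaining factor applied to $[\cO_{C_{LGQ'}(\cdots)}]$ is $[\cO^{\vir}_{LGQ',dw_{LGQ'}}]$ by \corref{CoVir:Sh} for $LGQ'$. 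Hence $1^![\cO^{\vir}_{Z(\beta),\sigma}]=[\cO^{\vir}_{LGQ',dw_{LGQ'}}]$ in $G_0(Q^\ke_X)$.

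\emph{The special fibre.} By \lemref{Const:KO3.2.2} and \lemref{Const:KOcosec}, at $\lambda=0$ the obstruction theory deforms to $[A_1|_{Q^\ke_X}\to K]\op[P\op Q\xrightarrow{-d_Q^\vee\op d_Q}Q^\vee\op P^\vee]$, the cosection to $0\boxplus\taut$ on $K\op(Q^\vee\op P^\vee)$, and the cone $C_{Z(\beta)}\tilde{U}|_0$ contains $C_{Q^\ke_X}(U^\ke\ti_{\fBo}|P|\ti_{\fBo}|Q|)$; as in \cite[\S 3.2.4]{KO1} one must check that the other components of $C_{Z(\beta)}\tilde{U}|_0$ do not contribute to the cosection-localized class. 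Factoring $\{p^*(0\boxplus\taut),t_{\tilde{F}|_0}\}\cong\{0,t_K\}\ot\{p^*\taut,t_{Q^\vee\op P^\vee}\}$ and applying \lemref{Ring:hom}, the $\{0,t_K\}$-factor is the zero-section Gysin map of $K$, producing $[\cO^{\vir}_{Q^\ke_X}]$ (the case $\sigma=0$ of \corref{CoVir:Sh}, i.e.\ Y.-P.\ Lee's virtual structure sheaf), while the factor $\{p^*\taut,t_{Q^\vee\op P^\vee}\}$ — the cosection-localized Gysin map of the tautological cosection of $Q^\vee\op P^\vee$ — contributes a correction line bundle. Evaluating it via the self-duality \eqref{duality}, $\{\taut,t\}=\{t^\vee,\taut^\vee\}\ot\Upsilon(\det(Q\op P)[\rank(Q\op P)])$, together with the $K$-theoretic analogue of \cite[Cor.~2.7, Cor.~2.8]{KO1}, the correction equals $(-1)^{\rank Q+\rank P}(\det Q\ot\det P)|_{Q^\ke_X}$; using $[Q\xrightarrow{d_Q}P^\vee]\cong R\pi_*\cV_2^\vee$ over $\fBo$ (\lemref{Const:KO}(1)) and \defref{defn:det} this rewrites as $(-1)^{\chi^{gen}(R\pi_*\cV_2^\vee)}\det R\pi_*(\cV_2\ot\omega_\fC)|_{Q^\ke_X}$. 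Thus $0^![\cO^{\vir}_{Z(\beta),\sigma}]=(-1)^{\chi^{gen}(R\pi_*\cV_2^\vee)}\det R\pi_*(\cV_2\ot\omega_\fC)|_{Q^\ke_X}\ot[\cO^{\vir}_{Q^\ke_X}]$.

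\emph{Conclusion and main obstacle.} Equating the two fibre computations and solving for $[\cO^{\vir}_{Q^\ke_X}]$ — using that $\det R\pi_*(\cV_2\ot\omega_\fC)|_{Q^\ke_X}$ is a line bundle with inverse $\det R\pi_*(\cV_2\ot\omega_\fC)^\vee|_{Q^\ke_X}$, and that $(-1)^{\chi^{gen}(R\pi_*\cV_2^\vee)}$ equals its own inverse — gives \eqref{Vir:Eq2}. The hard part is the special fibre: controlling which components of $C_{Z(\beta)}\tilde{U}|_0$ actually enter the cosection-localized class (exactly as in \cite[\S 3.2.4]{KO1}), and pinning down the correction term — that it is a \emph{determinant line bundle} rather than the naive $\lambda_{-1}$-class of a bundle, and that it carries precisely the sign $(-1)^{\chi^{gen}(R\pi_*\cV_2^\vee)}$ — for which the self-duality \eqref{duality} of the Koszul $2$-periodic complex and the $K$-theoretic versions of \cite[Cor.~2.7, Cor.~2.8]{KO1} do the essential work.
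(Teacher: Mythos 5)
Your proposal follows essentially the same route as the paper: deform over $\A^1_\C$ via $Z(\beta)$ and the cosection $\sigma$, use \lemref{lem:functoriality:pullback}, \lemref{lem:structuresheafandgysin} and \lemref{lem:functoriality:Proper} (packaged in the paper as \lemref{lem:Vir:Exp}) together with $\A^1$-homotopy invariance to equate the $\lambda=1$ fibre (which gives $[\cO^{\vir}_{LGQ',dw_{LGQ'}}]$ by \lemref{Ring:hom} and \corref{CoVir:Sh}) with the $\lambda=0$ fibre (which gives the sign and determinant twist of $[\cO^{\vir}_{Q^\ke_X}]$ via the splitting principle and the duality \eqref{duality}). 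The only cosmetic difference is that at $\lambda=0$ the paper's splitting produces the factor $\{p_0^*\taut,0\}$ (the section component in the $Q^\vee\op P^\vee$ direction vanishes on the cone, which lives in $|K|$), rather than your $\{p^*\taut,t_{Q^\vee\op P^\vee}\}$, but the evaluation of the correction term is identical.
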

	
	In  \cite[Corollary 3.5]{KO1}, it is shown that $C_{Z(\kb)} \tilde{U} \subset Z(\tilde{p}^* \sigma \circ t_{\tilde{F}})$. 
	Thus, we have the Koszul complex $\{ \tilde{p}^* \sigma, t_{\tilde{F}}\}$ on $C_{Z(\kb)} \tilde{U}$. 
	By Lemma \ref{Const:KOcosec}, $\{ \tilde{p}^* \sigma, t_{\tilde{F}}\}$ is exact off 
	$Q^{\ke}_X  \times \mathbb{A}^1_\C \subset Z(\kb) \subset C_{Z(\kb)} \tilde{U}$.
	For $\kl \in \mathbb{A}^1_\C$, 
	let 
	$p_{\kl}: | F |_{Z(\beta _{\kl} )} | \ra Z(\beta_{\kl} )$
	denote the  projection. 
	In the following Lemma we prove an analogue of \cite[Lemma 3.6]{CKW} for the structure sheaf.

	\begin{lem}\label{lem:structuresheafandgysin}
		Consider the following diagram where $\sX$, $\sY$ are $\DM$-stacks, and all squares are Cartesian
		\begin{equation}
		\xymatrix{
			\sY_{\infty} \ar@{^{(}->}[r]_{j} \ar[d] & \sY|_{\infty} \ar[r] \ar[d] & \sY \ar[r]  \ar[d]         & 0 \ar[d]_{v} \\
			\sX_{\infty}  \ar@{^{(}->}[r]_{i}         & \sX|_{\infty} \ar[r] \ar[d] & \sX  \ar[r] \ar[d]         & \P^1_\C \\
			& \lambda= \infty        \ar[r]          & \P^1_\C {-}\{1\}                       &
		}
		\end{equation}
		Assume further that the following hold:
		\begin{enumerate}
			\item $i$ and $j$ are closed immersions.
			\item The maps $\sX \to \P^1_\C - \{1\}$ and $\sX \to \P^1_\C$ are flat.
			\item The composite map $\sX_{\infty} \to \P^1_\C$ is flat. 
			\item $[\mathcal{O}_{\sX|_{\infty}}] -[i_*\mathcal{O}_{\sX_{\infty}}]$ is supported on $\sY|_{\infty}$. 
		\end{enumerate}
		Then $j_*[\cO_{\sY_{\infty}}] = \infty^![\cO_{\sY}]$ in $G_0(\sY|_{\infty})$.
	\end{lem}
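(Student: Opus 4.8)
The plan is to read $\infty^!$ as a derived restriction, push the identity of hypothesis~(4) through the flatness hypotheses, and then descend along the Cartesian squares from the ambient fibre $\sX|_\infty$ to $\sY|_\infty$.

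Recall that $\infty^!$ is the refined Gysin homomorphism of the regular embedding $\{\infty\}\hookrightarrow\P^1_\C\setminus\{1\}$ of the smooth curve; on $G_0$ it is computed by the two-term Koszul resolution $[\cO\xrightarrow{t}\cO]$ of $\cO_{\{\infty\}}$ (cf. \cite[\S~2]{YPLee}), so that $\infty^![\mathcal F]=\sum_q(-1)^q[\Tor^{\cO_{\P^1}}_q(\mathcal F,\cO_{\{\infty\}})]$, a class supported on the fibre over $\infty$, which reduces to $[\mathcal F|_\infty]$ as soon as $\mathcal F$ is flat over the base along that fibre. By hypothesis~(2), $\sX\to\P^1_\C\setminus\{1\}$ is flat, so Tor-independence of the Cartesian square defining $\sX|_\infty$ gives $\infty^![\cO_\sX]=[\cO_{\sX|_\infty}]$ in $G_0(\sX|_\infty)$; by hypothesis~(3) the analogous statement holds for $\sX_\infty$, so that $\sX_\infty$ is a flat limit whose formation is compatible with base change. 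Combining these with hypothesis~(4), we obtain in $G_0(\sX|_\infty)$ an identity $\infty^![\cO_\sX]=i_*[\cO_{\sX_\infty}]+\tau$ with $\tau$ a class supported on $\sY|_\infty$ (hypothesis~(1) together with d\'evissage is what makes $i_*$, $j_*$ and the notion ``supported on $\sY|_\infty$'' literally meaningful).

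It remains to descend this to $\sY$. The leftmost column of the diagram exhibits $\cO_\sY$, $\cO_{\sY|_\infty}$ and $\cO_{\sY_\infty}$ as restrictions of $\cO_\sX$, $\cO_{\sX|_\infty}$ and $\cO_{\sX_\infty}$ along the Cartesian squares, and the flatness hypotheses~(2)--(3) ensure that on the relevant pieces these restrictions carry no higher $\Tor$; hence the induced pullback $r\colon G_0(\sX|_\infty)\to G_0(\sY|_\infty)$ commutes with $\infty^!$ (the refined Gysin maps for the two transverse Cartier divisors in play commute, and $\infty^!$ commutes with the proper pushforwards through $i$ and $j$), satisfies $r([\cO_{\sX|_\infty}])=[\cO_{\sY|_\infty}]$ and $r(i_*[\cO_{\sX_\infty}])=j_*[\cO_{\sY_\infty}]$, and sends $\tau$ to a class still supported inside $\sY|_\infty$. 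Applying $r$ to the displayed identity therefore gives $\infty^![\cO_\sY]=j_*[\cO_{\sY_\infty}]+r(\tau)$; over the open locus where $\sY\to\P^1_\C$ is flat both $\infty^![\cO_\sY]$ and $j_*[\cO_{\sY_\infty}]$ coincide with the flat limit, so $r(\tau)$ is supported on the complementary closed locus, and one concludes $r(\tau)=0$ by tracing hypothesis~(4) through this complement with the localization sequence $G_0(\,\cdot\,)\to G_0(\sY|_\infty)\to G_0(\text{open})\to 0$. The main obstacle is precisely this descent step: hypothesis~(4) is an identity on the ambient fibre $\sX|_\infty$, and the delicate point is to verify that pulling it back along the Cartesian squares yields the on-the-nose equality $\infty^![\cO_\sY]=j_*[\cO_{\sY_\infty}]$ — equivalently, that $\cO_{\sX_\infty}$ stays $\Tor$-independent after restriction to $\sY$ and recovers $\cO_{\sY_\infty}$ exactly — which is the reason one needs the three flatness hypotheses in place of the (false) flatness of $\sY$ over $\P^1_\C$.
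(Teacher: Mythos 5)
Your overall skeleton matches the paper's: write $\infty^![\cO_\sX]=[\cO_{\sX|_\infty}]=i_*[\cO_{\sX_\infty}]+\tau$ with $\tau$ supported on $\sY|_\infty$, apply $v^!$, commute it past $\infty^!$ and past the proper pushforwards, and use flatness of $\sX_\infty$ over $\P^1_\C$ (hypothesis (3)) to identify $v^!i_*[\cO_{\sX_\infty}]$ with $j_*[\cO_{\sY_\infty}]$. But the one step that carries all the content --- why $v^!\tau=0$ --- is not actually proved. You argue that $r(\tau)=\infty^![\cO_\sY]-j_*[\cO_{\sY_\infty}]$ is supported on a closed locus and then ``conclude $r(\tau)=0$'' from the localization sequence; the sequence $G_0(\text{closed})\to G_0(\sY|_\infty)\to G_0(\text{open})\to 0$ only says that a class vanishing on the open part lifts to the closed part, and can never force the class itself to be zero. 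The mechanism the paper uses is the excess (self-)intersection formula: by hypothesis (4) one has $\tau=\iota_*\tau'$ for some $\tau'\in G_0(\sY|_\infty)$, and since $\sY|_\infty$ already lies in the fibre over $0\in\P^1_\C$, the excess bundle in the square computing $v^!\iota_*$ is the full (trivial, rank-one) normal bundle of $v$, whence $v^!\iota_*\tau'=\lambda_{-1}(\cO^\vee)\cdot\tau'=(1-[\cO])\cdot\tau'=0$. Without this observation the proof does not close.

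A second, smaller problem: you assert $r([\cO_{\sX|_\infty}])=[\cO_{\sY|_\infty}]$. Since $[\cO_{\sX|_\infty}]=\infty^![\cO_\sX]$ and $r\circ\infty^!=\infty^!\circ v^!$ with $v^![\cO_\sX]=[\cO_\sY]$, this assertion is literally the claim $\infty^![\cO_\sY]=[\cO_{\sY|_\infty}]$, i.e.\ that $\sY$ is flat over $\P^1_\C\setminus\{1\}$ --- which you yourself note is false and is precisely what the lemma is designed to circumvent. The assertion is not used in your final chain of equalities, but it points to the same gap as the missing excess-intersection step.
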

	\begin{proof}
		The proof is similar to  \cite[Lemma 3.6]{CKW} and  follows from the functoriality of the refined Gysin pullback, proper pushforward and the self intersection formula.  Consider the following, 
	
			\begin{align}
			\infty^!([\cO_\sY]) & =^1 \infty^! v^![\cO_\sX] \\
			& =^2 v^! \infty^! [\cO_\sX]   \nonumber\\
			& =^3 v^! [\cO_{\sX|_{\infty}}] \nonumber\\
			& =^4 v^!i_* [\cO_{\sX_{\infty}}]  \nonumber \\ 
			& =^5 j_*v^![\cO_{\sX_{\infty}}] \nonumber \\
			& =^6 j_* [\cO_{\sY_{\infty}}].
			\end{align}   
		As $\sX \to \P^1_k$ is flat, $=^1$ follows from  flat base change \cite[Lemma 1.2]{HR} . $=^2$ follows from the functoriality of the refined Gysin pullback. $=^3$ follows from the same argument as $=^1$ as $\sX \to \P^1_\C-\{1\}$ is flat. Note that the normal bundle $\sY|_{\infty} \to \sX|_{\infty}$ is trivial. Hence $=^4$ follows from  assumption $(3)$ above and the excess intersection formula (see \cite[page 8]{QU}). $=^5$ and $=^6$ follow from the projection formula and the flat base change respectively.   
	\end{proof}
	\begin{lem}\label{lem:Vir:Exp}
		With notations as above we have the following 
		\begin{equation}\label{Vir:Exp}
		\kl ^! \h ^{C_{Z(\kb)} \tilde{U}}_{{Q^{\ke}_X \times \mathbb{A}^1}} \{ \tilde{p}^* \sigma, t_{\tilde{F}}\} ( [\cO_{C_{Z(\beta)} \tilde{U}} ]  ) = \h _{Q^{\ke}_X}^{C_{Z(\kb_\kl)} U  } \{ p^*_{\kl} \sigma, t_F\}  ( [\cO_{ C_{Z(\beta _{\kl})} U}]   )
		\end{equation}
	\end{lem}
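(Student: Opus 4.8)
The plan is to treat the $\A^1_\C$-parameter $\kl$ as the base of a family and to move the refined Gysin pullback $\kl^!$ past $\h$ via the functoriality of \lemref{lem:functoriality:pullback} (equivalently \lemref{homInv}). Concretely, I would apply \lemref{lem:functoriality:pullback} with $Z=\A^1_\C$, $Z'=\Spec~\C$ and $g_Z\colon\Spec~\C\hookrightarrow\A^1_\C$ the regular embedding of the closed point $\kl$; with $\sY=C_{Z(\kb)}\tilde U$ carrying its natural morphism $C_{Z(\kb)}\tilde U\to Z(\kb)\subset\tilde U=U\times_\C\A^1_\C\to\A^1_\C$; and with $\sX=Q^\ke_X\times\A^1_\C\cong Z(\sigma)\hookrightarrow C_{Z(\kb)}\tilde U$, a closed immersion by \lemref{Const:KOcosec}. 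The Cartesian squares then produce $\sY'=(C_{Z(\kb)}\tilde U)|_\kl$ and $\sX'=Q^\ke_X$, and $g_\sX^!$ is exactly the map $\kl^!\colon G_0(Q^\ke_X\times\A^1_\C)\to G_0(Q^\ke_X)$ occurring in \eqref{Vir:Exp}. Since $\{\tilde p^*\sigma,t_{\tilde F}\}$ is a $2$-periodic complex of vector bundles on $C_{Z(\kb)}\tilde U$ locally contractible off $Q^\ke_X\times\A^1_\C$ (\lemref{Const:KOcosec}), so that $\h^{C_{Z(\kb)}\tilde U}_{Q^\ke_X\times\A^1_\C}(\{\tilde p^*\sigma,t_{\tilde F}\})$ is defined (\defref{Defn:$h^Y_X$}), \lemref{lem:functoriality:pullback} will yield
\[
\kl^!\,\h^{C_{Z(\kb)}\tilde U}_{Q^\ke_X\times\A^1_\C}(\{\tilde p^*\sigma,t_{\tilde F}\})([\cO_{C_{Z(\beta)}\tilde U}])=\h^{(C_{Z(\kb)}\tilde U)|_\kl}_{Q^\ke_X}\big(\{\tilde p^*\sigma,t_{\tilde F}\}|_\kl\big)\big(\kl^![\cO_{C_{Z(\beta)}\tilde U}]\big).
\]

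Next I would identify the two inputs on the right. Since $\tilde F=F|_{\tilde U}$ is pulled back from $F$ on $U$, and the tautological section and the cosection $\sigma$ restrict compatibly, $\{\tilde p^*\sigma,t_{\tilde F}\}|_\kl$ is the Koszul complex $\{p_\kl^*\sigma,t_F\}$, now regarded on the a priori larger ambient stack $(C_{Z(\kb)}\tilde U)|_\kl\supseteq C_{Z(\kb_\kl)}U$; restricting \lemref{Const:KOcosec} to $\kl$ shows it is locally contractible off $Q^\ke_X$. As $Q^\ke_X$ already lies in $C_{Z(\kb_\kl)}U$, applying \lemref{lem:functoriality:Proper} to the closed immersion $\iota_\kl\colon C_{Z(\kb_\kl)}U\hookrightarrow(C_{Z(\kb)}\tilde U)|_\kl$ identifies $\h^{(C_{Z(\kb)}\tilde U)|_\kl}_{Q^\ke_X}(\{p_\kl^*\sigma,t_F\})\circ\iota_{\kl*}$ with $\h^{C_{Z(\kb_\kl)}U}_{Q^\ke_X}(\{p_\kl^*\sigma,t_F\})$. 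Therefore \eqref{Vir:Exp} will follow once I establish the identity $\kl^![\cO_{C_{Z(\beta)}\tilde U}]=\iota_{\kl*}[\cO_{C_{Z(\beta_\kl)}U}]$ in $G_0((C_{Z(\kb)}\tilde U)|_\kl)$.

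For $\kl\neq0$ this is immediate: by \eqref{eqn:structuresheaf} the restriction of $C_{Z(\kb)}\tilde U$ over $\A^1_\C\setminus\{0\}$ is the constant family with fibre $C_{Z(\beta_\kl)}U$, so $\kl^!$ introduces no correction term. For $\kl=0$ there is only the canonical closed immersion $C_{Z(\beta_0)}U\hookrightarrow(C_{Z(\kb)}\tilde U)|_0$ of \eqref{eqn:structuresheaf}, comparing the normal cone of the special fibre with the special fibre of the normal cone; here I would invoke the just-proved \lemref{lem:structuresheafandgysin}, taking — after the coordinate change exchanging $0$ and $\infty$ — $\sY=C_{Z(\kb)}\tilde U$ extended over $\P^1_\C$, $\sY_\infty=C_{Z(\beta_0)}U$, and $\sX$ a suitable flat ambient family containing $\sY$ as a closed subscheme, so that its conclusion $j_*[\cO_{\sY_\infty}]=\infty^![\cO_\sY]$ becomes exactly $\kl^![\cO_{C_{Z(\beta)}\tilde U}]=\iota_{0*}[\cO_{C_{Z(\beta_0)}U}]$. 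I expect the technical heart — and the main obstacle — to lie in this $\kl=0$ case, namely in verifying the support hypothesis (4) of \lemref{lem:structuresheafandgysin}: one must control the excess components appearing when the special fibre $(C_{Z(\kb)}\tilde U)|_0$ is compared with the normal cone $C_{Z(\beta_0)}U$ of the special fibre, and this requires the explicit description of $\beta$ and $\sigma$ from \lemref{Const:KO3.2.2} and \lemref{Const:KOcosec} together with the deformation analysis of \cite[\S~3.2]{KO1}.
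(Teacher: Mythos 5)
Your proposal follows the same route as the paper's proof: first move $\kl^!$ past $\h$ via \lemref{lem:functoriality:pullback}, then identify $\kl^![\cO_{C_{Z(\beta)}\tilde U}]$ with the pushforward of $[\cO_{C_{Z(\beta_\kl)}U}]$ using \lemref{lem:structuresheafandgysin} (the paper cites this together with \cite[Lemma 3.6]{CKW}, which supplies exactly the geometric/support input you flag as the technical heart), and finally conclude with \lemref{lem:functoriality:Proper}. The argument is correct and matches the paper's proof step for step.
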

	\begin{proof}
		It follows that  $\kl ^! \h ^{C_{Z(\kb)} \tilde{U}}_{{Q^{\ke}_X \times \mathbb{A}^1}} \{ \tilde{p}^* \sigma, t_{\tilde{F}}\} ( [\cO_{C_{Z(\beta)} \tilde{U}} ]  ) $ equals $ \h ^{(C_{Z(\kb)} \tilde{U}) |_\kl}_{{Q^{\ke}_X }}  \{ p^*_{\kl} \sigma, t_F\}   ( \kl^![ \cO_{ C_{Z(\beta)} \tilde{U} } ]   )$ by Lemma \ref{lem:functoriality:pullback}. Let $j_{\lambda}: C_{Z{(\beta_\lambda)}}U \inj C_{Z(\beta)}U|_{\lambda}$ denote the natural map, then from \lemref{lem:structuresheafandgysin} and \cite[Lemma 3.6]{CKW} it follows  that, 
		\begin{equation}\label{eqn: structuresheaf}
		\h ^{(C_{Z(\kb)} \tilde{U}) |_\kl}_{{Q^{\ke}_X }} \{ p^*_{\kl} \sigma, t_F\}   ( \kl^![ \cO_{ C_{Z(\beta)} \tilde{U} } ]   ) = \h _{Q^{\ke}_X }^{(C_{Z(\kb)} \tilde{U})  |_\kl}  \{ p^*_{\kl} \sigma, t_F\}   ( j_{\lambda*}[\cO_{ C_{Z(\beta _{\kl})} U}]   ). 
		\end{equation}
		The claim now follows by applying \lemref{lem:functoriality:Proper} to the right hand side of \eqref{eqn: structuresheaf}. 
	\end{proof}

	We now complete the proof of the main theorem.
		
		\begin{proof}[Proof of \thmref{Comp:Thm}]
	
	Let $p':  C_{LGQ ' } (U^\ke \times_{\fBo} |P|) \subset | (B_1 \oplus Q^\vee )|_{LGQ'} | \ra \LGQ'$ be the projection.
To simplify notations, let $U_P := U^\ke \times_{\fBo} |P|$ and $\cF^1 : = (B_1 \op Q^\vee ) |_{\LGQ '}$.
	Now from \lemref{lem:Vir:Exp}, we have the following, \begin{align*}  \eqref{Vir:Exp}|_{\lambda =1}  & = \h^{C_{LGQ'} U_P  \times_{\fBo} | Q |}_{Q^\ke _X} ( \{ p'^*  dw_{\LGQ '}, t_{\cF ^1}\}\boxtimes _{\fBo} \{ 0, t_Q \} ) ( [\cO_{C_{LGQ'} U_P\ti _{\fBo} | Q | } ] )  \\
	& =  \h^{C_{LGQ'} U_P}_{Q^\ke _X}  ( \{ p'^* dw_{\LGQ'}, t_{\cF ^1}\} ) (   [\cO_{C_{LGQ'} U_P}]  )  \\
	& =  [\cO_{LGQ',dw_{LGQ'}}^{\vir}] . \end{align*} 
	Here the first equality is from Lemma \ref{Const:KOcosec}; the second equality is by Lemma \ref{Ring:hom}; and the third equality follows from Corollary \ref{CoVir:Sh}.
	
	Let $m: C_{Z(\beta _{0})} U \ra |K|$ and $p_0: C_{Z(\beta _{0})} U \ra Z (\beta _0) $  be projections. On  $C_{Z(\beta _{0})} U  $, we deform the complex $\{ p_0^*\sigma , t_F \}$ 
	supported on $Q^{\ke}_X$ to $\{0, m^*t_K \}\ot \{ p_0^*\taut, 0 \}$ supported also on $Q^{\ke}_X$ under the deformation in Lemma \ref{Const:KOcosec}; see the commutative diagram in \cite[page 17]{KO1}.
	By applying splitting principle (\lemref{lem:splittingprinciple}) to this deformation, $\eqref{Vir:Exp}|_{\kl=0}$ becomes the following:
	\begin{align*} &    \h^{ C_{Z(\beta _{0})} U  }_{Q^{\ke}_X } (\{0, m^*t_K \}\ot \{ p_0^*\taut, 0 \})  ( [\cO_{C_{Z(\beta _{0})} U}] )  \\
	= &  \h^{ C_{Z(\beta _{0})} U }_{Q^{\ke}_X } (\{0, m^*t_K \}\ot \Upsilon (\Lambda ^{-\bullet} (P^{\vee} \oplus Q^{\vee}) \ot \Lambda ^{n} (P \oplus Q)[n])) ( [\cO_{C_{Z(\beta _{0})} U}]  ) \\
	= &  (-1)^{\chi (\cV ^{\vee}_2 )}  \Lambda ^{n} (P \oplus Q)|_{Q_X^{\ke}}  \ot ( \h^{| K| } _{Q^{\ke}_X} \{0, t_K \}  ( [ \cO_{C_{Q^{\ke}_X} U^{\ke}}]  ) ) \\
	= &   (-1)^{\chi (\cV ^{\vee}_2 )}  \det R \pi_*(\cV_2 \ot \omega_\fC)  |_{Q_X^{\ke}}   \ot [ \cO_{Q^{\ke}_{g, k} ( \Zs , d)}^{\vir}]
	\end{align*}
	where $\Lambda ^{-\bullet} (P^{\vee} \oplus Q^{\vee})$ is the Koszul complex, and
	$n$ is the rank of $P\oplus Q$. The first equality follows from \eqref{duality} as,
	$$\{ p_0^*\taut, 0 \} = \{ 0, p_0^*\taut^\vee \} \ot \Upsilon( \Lambda ^{n} (P \oplus Q)[n] )= \Upsilon(\Lambda ^{-\bullet} (P^{\vee} \oplus Q^{\vee}) \ot \Lambda ^{n} (P \oplus Q)[n]). $$
	The second equality follows from Lemma \ref{Ring:hom} and the fact that,  $$\h^{ C_{Z(\beta _{0})} U }_{|K| } (\Upsilon (\Lambda ^{-\bullet} (P^{\vee} \oplus Q^{\vee}) )) ([\cO_{C_{Z(\beta _{0})} U}]) = [\cO_{C_{Q^{\ke}_X} U^{\ke}}],$$ by Corollary \ref{CoVir:Sh}.
	Note that the sign $(-1)^{\sX(\sV_2^{\vee})}$ comes from the shifting by $[n]$.
	The third equality follows from the definitions.

	\end{proof}

\end{document}